\newtheorem{thm}{Theorem}[section]
\newtheorem{prop}[thm]{Proposition}
\newtheorem{lem}[thm]{Lemma}
\theoremstyle{definition}
\newcommand{\be}{\begin{equation}}
\newcommand{\ee}{\end{equation}}
\newcommand{\R}{\mathbb{R}}
\newcommand{\N}{\mathbb{N}}
\newcommand{\E}{\mathbb{E}}
\def \calm {{  {\mathcal{M}}  }}
\def \calo {{  {\mathcal{O}}  }}
 \def  \eps { { \varepsilon } }
\begin{document}

\baselineskip=1.2\baselineskip

\pagestyle{plain}
\title{ Long-time Behaviour  of the Non-autonomous Stochastic FitzHugh-Nagumo Systems
 on Thin Domains\footnote{This work was supported
 by NSFC (12371178) and
  Sichuan Science and Technology Program (2023ZYD0002).}}

\author{{ Dingshi Li, Ran Li\footnote{Corresponding authors:
liranzym@my.swjtu.edu.cn (R. Li).}, Tianhao Zeng}
 \\{ \small\textsl{School of Mathematics, Southwest Jiaotong University, Chengdu, 610031, P. R. China}}}

\date{}
\maketitle

{ \bf Abstract} \ \ \
We study the long-time behavior of non-autonomous stochastic FitzHugh-Nagumo systems
 on thin domains.  As  the  $(n+1)$-dimensional
thin domains collapses  onto an $n$-dimensional domain,
an  $n$-dimensional limiting FitzHugh-Nagumo system is derived.
This $n$-dimensional limiting system encodes the defining
geometry of the $(n+1)$-dimensional system.
To justify this limiting process, we
show that the pullback measure attractors of the FitzHugh-Nagumo systems
 on thin domains are upper semi-continuous
as the height of thin direction  tends to zero.

{\bf Keywords.}     Stochastic FitzHugh-Nagumo system;
 pullback measure attractor;       upper semi-continuity.

 {\bf MSC 2010.} Primary  37L55; Secondary 34F05, 37L30, 60H10

\section{Introduction}
\setcounter{equation}{0}

The FitzHugh-Nagumo system was proposed as a mathematical model to
describe the signal transmission across axons in neurobiology in \cite{Fit61,NAY62}.
The long-term behavior of this deterministic or stochastic system
 is extensively studied, such as attractors \cite{GW18, LY16, Wang09,Zhao21,Huang17,Wang2008},
 invariant manifolds \cite{ZS21}, and invariant measures \cite{WGW21}, etc.
Here we intend to investigate the long-time behavior of the non-autonomous stochastic
FitzHugh-Nagumo system with nonlinear noise.

Let $\mathcal Q$
be a smooth bounded domain in $  \mathbb{R}^n$
 and  $\mathcal O_\varepsilon$
 be the following $n+1$ dimensional region
 \[
{\mathcal{O}}_\varepsilon   = \left\{ {x=\left( {x^{\ast},x_{n+1}} \right)|x^{\ast}=(x_1,\ldots,x_n) \in \mathcal Q
\,\,\text{and}\,\,0 < x_{n+1} < \varepsilon g\left( x^{\ast} \right)} \right\},
\]
where
$0<\varepsilon\leq 1$
and $g\in C^2(\overline {\mathcal Q}, (0,+\infty))$ which implies that
  there exist
  positive constants $\underline{g}$ and $\overline{g}$ such that
\begin{equation}\label{a1}
\underline{g} \le g\left( {x^{\ast} } \right) \le\overline{g},\quad\forall x^{\ast}  \in \overline {\mathcal Q}.
\end{equation}
Denote $\mathcal{O}=\mathcal Q \times (0,1)$ and  $\widetilde{\mathcal O} = \mathcal Q \times [0,\overline{g}]$
 which contains $\mathcal O_\varepsilon$ for $0<\varepsilon\leq 1.$

In this paper, we
will  study the dynamical behavior of the stochastic
FitzHugh-Nagumo system  on
$\mathcal O_{\varepsilon}$ as $\varepsilon$ tends to zero:
\begin{equation}\label{eu1}
\left\{ \begin{array}{l}
d\hat{u}^\varepsilon\left( t \right) - \Delta \hat{u}^\varepsilon\left( t \right)dt+\alpha \hat{v}^\varepsilon\left( t \right)dt
= (F\left( {x,\hat{u}^\varepsilon\left( t \right)} \right)+G_1(t,x))dt\\
\quad\quad + \sum\limits_{k = 1}^\infty  {\left( {{{\sigma} _{1,k}}\left( x \right) + {\varpi _{1,k}}
 \left( {\hat{u}^\varepsilon\left( t \right)} \right)}
  \right)d{W_k}\left( t \right),\quad x \in {\mathcal O_\varepsilon },\,\
  t > \tau},  \\
  d\hat{v}^\varepsilon\left( t \right)+\gamma\hat{v}^\varepsilon\left( t \right)dt -\beta\hat{u}^\varepsilon\left( t \right)dt
= G_2(t,x)dt \\
\quad\quad+ \sum\limits_{k = 1}^\infty  {\left( {{{\sigma} _{2,k}}\left( x \right) + {\varpi _{2,k}}
\hat{v}^\varepsilon\left( t \right)}
  \right)d{W_k}\left( t \right),\quad x \in {\mathcal O_\varepsilon },\,\
  t > \tau},  \\
\frac{{\partial \hat{u}^\varepsilon}}{{\partial {\nu_\varepsilon }}} = 0,
\quad\frac{{\partial \hat{v}^\varepsilon}}{{\partial {\nu_\varepsilon }}} = 0,
\quad x \in \partial {\mathcal O_\varepsilon },
\end{array} \right.
\end{equation}
with initial data
\begin{equation}\label{eu2}
\hat{u}^\varepsilon \left( \tau,x \right) ={\hat \xi_1}^\varepsilon(x),
\quad \hat{v}^\varepsilon \left( \tau,x \right) ={\hat \xi_2}^\varepsilon(x),\quad x\in \mathcal O_\varepsilon,
\end{equation}
where $\tau\in \R$, $\nu_\varepsilon$ is the
unit outward  normal vector  to  $\partial \mathcal O_\varepsilon$,  $\alpha$, $\beta$ and $\gamma$ are positive constants,
$F$ is nonlinear function defined on
${\widetilde {\mathcal O}}\times \mathbb{R}$,
$G_1\in L^2_{loc}(\R, L^\infty( { {\widetilde {\mathcal O}} } ))$,
$G_2\in L^2_{loc}(\R, H^{1,\infty}( { {\widetilde {\mathcal O}} } ))$,
${\sigma} _{i,k}$  is function defined on ${\widetilde {\mathcal O}}$ for $i=1,2$,
$\varpi _{1,k}: \R\rightarrow \R$
is a  nonlinear function
for each   $k\in \N$, $\varpi _{2,k}$, $k\in \N$, are nonnegative constants,
and $(W_k)_{k\in \mathbb N}$ is a sequence of independent
standard Wiener processes on a complete filtered probability space $
( \Omega ,\mathcal F, \{\mathcal F_t  \}_{t \in\R} ,P )$  satisfying  the usual condition.

As $\varepsilon\rightarrow 0$,   as will show later,
the pullback measure attractors of \eqref{eu1}-\eqref{eu2}  converge to
the pullback measure attractors of the following system:
\begin{equation}{\label{eu3}}
\left\{ \begin{array}{l}
d{u^0}\left( t \right) - \frac{1}{g}\sum\limits_{i = 1}^n {{{\left( {gu_{{y_i}}^0} \right)}_{{y_i}}}dt+\alpha v^{0}(t)dt
 = F\left( {\left( {{y^*},0} \right),{u^0}} \right)dt} +G_1(t,\left( {{y^*},0} \right))dt\\
\quad\quad + \sum\limits_{k = 1}^\infty  {\left( {{\sigma _{1,k}}\left( {\left( {{y^*},0} \right)} \right)
+ {\varpi_{1,k}}\left( {{u^0}\left( t \right)}
 \right)} \right)d{W_k}\left( t \right),\quad {y^*}
 = \left( {{y_1}, \ldots ,{y_n}} \right) \in \mathcal Q,\,\,t > \tau,} \\
 d{v^0}\left( t \right)+\gamma v^0(t)dt-\beta u^{0}(t)dt =G_2(t,\left( {{y^*},0} \right))dt\\
\quad\quad + \sum\limits_{k = 1}^\infty  {\left( {{\sigma _{2,k}}\left( {\left( {{y^*},0} \right)} \right)
+ {\varpi_{2,k}}{v^0}\left( t \right)} \right)d{W_k}\left( t \right),\quad {y^*} \in \mathcal Q,\,\,t > \tau,} \\
\frac{{\partial u^0}}{{\partial \nu_0}} = 0,\quad\frac{{\partial v^0}}{{\partial \nu_0}} = 0,\quad y^* \in
\partial {\mathcal Q},
\end{array} \right.
\end{equation}
 with  initial condition
 \begin{equation}{\label{eu4}}
 u^0(\tau,y^{\ast})= \xi^0_1(y^{\ast}),\quad v^0(\tau,y^{\ast})= \xi^0_2(y^{\ast}),\quad y^{\ast}\in \mathcal Q.
 \end{equation}

 Hale and Rauge \cite{hale1992reaction, hale1992damped} first studied
 the limiting behavior of attractors of deterministic PDEs on thin domains.
  This subject is further studied by
  many experts since then, see, e.g.,
 \cite{hale1995reaction, arrieta2006dynamics,
  A1, A2,  ciuperca1996reaction, prizzi2001effect,
 raugel1993navier}.
Considering  the influence of random factors,
 the concept of random  attractors for random dynamical systems was introduced
in \cite{CDF97,CF94} to capture the dynamical behavior. The limiting behavior  of random  attractors for
systems    on thin domains was established in \cite{caraballo2007synchronization,
 li2018limiting,
  li2019limiting, li2017limiting, li2018regular, shi2019asymptotic} and the references therein.

Invariant measures
can be used to describe the  statistical dynamics of
random differential equations, which have been extensively
investigated in the literature,
see, e.g.,  \cite{brzezniak2017invariant,brzezniak2016invariant, eckmann2001invariant,
 kim2006invariant,wang2019dynamicsf,
wang2019dynamics,misiats2016existence,chen2021invariant}.
In particular, the
  limiting behavior of invariant measures
of stochastic equations
was   recently investigated in
\cite{li2021limiting, LWW21, chen2020limiting,liu22,liu2022,HJLY16}.
In the case that  the invariant measure is unique,
the limiting behavior  of invariant measures of the
stochastic   Navier-Stokes equations
defined in a three-dimensional
   thin domain $\mathbb{T}^2\times(0, \epsilon)$
   was investigated by
 Chueshov and Kuksin \cite{chueshov2008random, chueshov2008stochastic}
 where $\epsilon>0$ is a small parameter and
  $\mathbb{T}^2$ is a two-dimensional torus.

Measure attractors for   homogeneous Markov processes have been extensively studied in the literature,
see, e.g., \cite{sch91,M1992,CC98,sch97,cra08,sch99}. Just as the equilibrium point is contained
in the attractor, the invariant measure is contained in the measure attractor.
To describe the dynamic behavior of nonhomogeneous Markov processes,
Li and Wang \cite{LW24} introduced the concept of a pullback measure attractor,
whose structure is  characterized by the complete
    solutions of the non-autonomous dynamical system on measure space  generated by
 nonhomogeneous Markov processes. It should be emphasized that in \cite{DR06,DR07},
 the  complete  solution is referred to as an
 evolution systems of measures.

 In this paper,   we firstly
    prove the existence of pullback
 measure attractors of the stochastic equations on thin domains.
  Then, based on  the uniform estimates of solutions
  with respect to $\varepsilon$, we
    show the upper semi-continuity  of the pullback
 measure attractors of  \eqref{eu1}-\eqref{eu2} as  the   thin domains collapses.
To our knowledge, the FitzHugh-Nagumo system  is partially dissipative systems.
The partial dissipativity introduces a major
obstacle for examining the asymptotic compactness of dynamical system on measure space.
To overcome this obstacle,
we need to decompose
$\hat v^\varepsilon$ as a sum of two functions: one is regular in the sense that it belongs to $H^1(\calo)$ and the
other converges to zero in mean square moment as $t\rightarrow\infty$.
This splitting technique was used by several authors for the attractor problems
(see, for example \cite{Marion89,Wang2008}).

The outline of this paper is  as follows.
 In the next section, we
 recall the notation and theory of pullback
 measure attractors for
 non-autonomous dynamical
 systems  defined on the space of probability
 measures. In Section 3, we reformulate
  problem \eqref{eu1}-\eqref{eu2}
  and introduce a stochastic system
  defined on the fixed domain $\mathcal O$.
 the main results of this paper are presented in Section 4.
    Section 5  is devoted to the  uniform
    estimates of  solutions  for both  systems  \eqref{eu1}-\eqref{eu2}
    and \eqref{eu3}-\eqref{eu4}.
    In the last two sections, we prove the
     existence  and the convergence  of pullback  measure attractors of
    the  stochastic  equations, respectively.
    The letters $c_i$, $i\in \N,$ are generic positive constants which may change their values from
line to line or even in the same line.

\section{Preliminaries}
\setcounter{equation}{0}

In this section, for the reader's convenience,  we recall some results
regarding pullback measure attractors for non-autonomous
dynamical systems on the space of probability
measures(see, e.g., \cite{LW24}).
For details, please refer to the reference \cite{LW24}.

In what follows, we denote by $X$   a separable Banach space with norm  $\|\cdot\|_X$.  Let
$C_b(X)$  be  the space of bounded continuous functions $\varphi:X\rightarrow \R$
endowed with the norm
\[
\left\| \varphi \right\|_\infty   = \mathop {\sup }\limits_{x \in X} \left| {\varphi\left( x \right)} \right|,
\]
and $ C_b(X_w)$ as the space of bounded  weakly continuous functions.
Denote by $L_b(X)$ the space of bounded Lipschitz functions on $X$
which consists of all
  functions $\varphi\in C_b(X)$
  such that
\[
\text{Lip}\left( \varphi \right): = \mathop {\sup }\limits_{x_1 ,x_2  \in X,x_1\neq x_2} \frac{{\left| {\varphi \left( {x_1 } \right)}  -
{\varphi\left( {x_2 } \right)} \right|}}{{\|x_1-x_2\|_X}} < \infty .
\]
The space $L_b(X)$ is endowed with the norm
\[
\left\| \varphi \right\|_L  = \left\| \varphi \right\|_\infty   + \text{Lip}\left( \varphi \right).
\]
Let   $\mathcal P(X)$
be  the set of probability measures on $(X,\mathcal B(X))$, where
$\mathcal B(X)$ is the Borel $\sigma$-algebra of $X$.
Given   $\varphi\in C_b(X)$
and
$\mu \in  \mathcal P(X)$, we write
 $$
\left( { \varphi, \mu  } \right)
=  \int_{X} {\varphi \left( x \right)\mu \left( {dx} \right)}.
$$

Recall that
 a sequence $\{\mu_n \}_{n=1}^{\infty}
\subseteq   \mathcal P(X)$
is weakly convergent to
$\mu \in  \mathcal P(X)$
if
for every  $\varphi\in C_b(X)$,
$$
\lim_{n\to \infty }
  \left( { \varphi, \mu_n  } \right)
  =\left( { \varphi, \mu  } \right).
  $$
 Define a metric on $\mathcal P(X)$ by
\[
  d_{\mathcal P(X)} \left( {\mu _1 ,\mu _2 } \right) = \mathop {\sup }\limits_{\scriptstyle \varphi \in L_b \left( X \right) \hfill \atop
  \scriptstyle \left\| \varphi \right\|_L  \le 1 \hfill} \left| {\left( {\varphi,\mu _1 } \right) - \left( {\varphi,\mu _2 } \right)} \right|,
  \quad
  \forall \
  \mu_1,\mu_2\in \mathcal P(X).
\]
Then     $(\mathcal P(X), d_{\mathcal P(X)}  )$ is
a Polish space.
Moreover,    a sequence $\left\{ {\mu _n } \right\}_{n = 1}^\infty   \subseteq \mathcal P\left( X \right)$
converges to   $\mu$ in $(\mathcal P(X), d_{\mathcal P(X)}  )$  if and only if
 $\{\mu_n \}_{n=1}^{\infty}
 $
 converges to
$\mu  $ weakly.

Given $p >0$, let
$\mathcal P_p(X)$ be the
 subset of $\mathcal P(X)$
 as defined by
\[
\mathcal P_p \left( X \right) = \left\{ {\mu  \in \mathcal
P\left( X \right):
\int_X {\|x\|_X^p \mu \left( {dx} \right) <    \infty } } \right\}.
\]
Then
 $(\mathcal P_p(X), d_{\mathcal P(X)}  )$ is also a   metric space. Without confusion, we denote
 $(\mathcal P_p(X), d_{\mathcal P(X)}  )$ by  $(\mathcal P_p(X), d_{\mathcal P_p(X)}  )$.
   Given  $r>0$, denote by
 \[
 B_ {\mathcal P_p(X)} (r)  = \left\{ {\mu  \in \mathcal P_p \left( X \right)
 	:
 	\left(\int_X \|x\|_X^p \mu \left( {dx} \right)\right)^{\frac{1}{p}} \leq r } \right\}.
 \]
 Recall that
 the Hausdorff semi-metric
 between subsets of
 ${\mathcal P_p(X)}$ is
 given by
 $$
 d_ {\mathcal P_p(X)}(Y,Z) =
 \mathop {\sup }\limits_{y \in Y} \mathop {\inf }\limits_{z \in Z} d  \left( {y,z} \right),
 \quad
 Y, Z  \subseteq \mathcal P_p(X),
 \ \ Y, Z \neq \emptyset .
 $$

We now recall the following standard result.

\begin{prop}\label{Tesa}
	 Let $\mathcal D$ be a neighborhood-closed collection of
 families of subsets of $\mathcal P_p \left( X \right)$
 and
  $S$
 be  a
 continuous non-autonomous dynamical system on $\mathcal P_p \left( X \right)$.
 Then
 $S$ has a unique  $\mathcal D$-pullback measure attractor $\mathcal A$ in $\mathcal P_p \left( X \right)$
 if and only if $S$ has a
 closed  $\mathcal D$-pullback  absorbing
set $K\in \mathcal D$ and $S$ is
$\mathcal D$-pullback asymptotically  compact in $\mathcal P_p \left( X \right)$.
The $\mathcal D$-pullback measure attractor $\mathcal A$    is given by, for each $\tau\in \R$,
\begin{align*}
\begin{split}
\mathcal A\left( \tau \right)  = \omega(K,\tau)
 =\{\psi(0,\tau):\,\psi\,\text{is a}\,\,  \mathcal D \text{-complete orbit of}\,\,S\}\\
 =\{\xi(\tau):\,\xi\,\text{is a}\,\,  \mathcal D \text{-complete solution  of}\,\,S\}.
\end{split}
\end{align*}
If, in addition, both
$S$ and $K$ are $T$-periodic for some $T>0$, then so is the attractor $\mathcal A$,
i.e., $\mathcal A(\tau)=\mathcal A(\tau+T)$, for all $\tau\in \R$.
\end{prop}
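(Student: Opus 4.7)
The plan is to follow the standard pattern for establishing the existence of pullback attractors, adapted here to the metric space $(\mathcal{P}_p(X), d_{\mathcal{P}_p(X)})$. The two directions of the equivalence are handled separately; the ``only if'' direction is essentially immediate, while the ``if'' direction contains all the real content.

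For the necessity direction, supposing $\mathcal{A}$ is a $\mathcal{D}$-pullback measure attractor, I would use $\mathcal{A}$ itself (or a small closed neighborhood of it belonging to $\mathcal{D}$) as the absorbing set $K$, and derive pullback asymptotic compactness from the fact that every $\mathcal{D}$-pullback attracted sequence must accumulate inside the compact set $\mathcal{A}(\tau)$.

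For the sufficiency---the principal direction---I would define, for each $\tau \in \R$, the pullback $\omega$-limit set
\[
\omega(K, \tau) = \bigcap_{s \geq 0} \overline{\bigcup_{t \geq s} S(\tau, \tau - t) K(\tau - t)},
\]
set $\mathcal{A}(\tau) := \omega(K,\tau)$, and then verify in sequence: (i) nonemptiness and compactness of $\mathcal{A}(\tau)$, by picking sequences $\mu_n \in S(\tau, \tau - t_n) K(\tau - t_n)$ with $t_n \to \infty$ and extracting convergent subsequences via pullback asymptotic compactness; (ii) $\mathcal{D}$-pullback attraction of $\mathcal{A}$ from $K$, and then from arbitrary $D \in \mathcal{D}$ by exploiting first that $K$ absorbs $D$; (iii) invariance $S(t+\tau,\tau)\mathcal{A}(\tau) = \mathcal{A}(t+\tau)$, using the continuity of $S(t+\tau,\tau)$ on $\mathcal{P}_p(X)$ for the forward inclusion, and a diagonal extraction through pullback asymptotic compactness for the backward inclusion; (iv) minimality, which is immediate since any closed pullback attracting set must contain every $\omega$-limit arising from $K$. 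The characterization by complete orbits/solutions follows by the standard construction: an element $\mu \in \mathcal{A}(\tau)$ is a limit $\mu = \lim S(\tau, \tau-t_n) \nu_n$ with $\nu_n \in K(\tau-t_n)$, and by using compactness of $\mathcal{A}(s)$ at countably many backward times $s$ together with a diagonal argument and the continuity of $S$, one builds a bounded complete solution passing through $\mu$ at time $\tau$; conversely, any $\mathcal{D}$-complete bounded solution is pullback-attracted into $\mathcal{A}$ and by invariance takes its values in $\mathcal{A}$ at every time. The $T$-periodicity of $\mathcal{A}$ when both $S$ and $K$ are $T$-periodic drops out of the defining formula for $\omega(K,\tau)$.

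The main obstacle I expect is step (iii), the invariance. The forward inclusion is a direct consequence of the continuity of $S(t+\tau,\tau)$ on $(\mathcal{P}_p(X), d_{\mathcal{P}_p(X)})$---one must be careful that the chosen metric genuinely makes $S$ continuous, since weak convergence of measures does not automatically pass to convergence of $p$-moments---while the reverse inclusion requires lifting each point of $\mathcal{A}(t+\tau)$ back to an element of $\mathcal{A}(\tau)$ by solving a pullback extraction problem one step earlier and then invoking asymptotic compactness again. The interplay between the absorbing property of $K$, the compactness step (i), and the continuity of $S$ must be choreographed carefully here, and this is the piece that genuinely uses all three hypotheses in combination.
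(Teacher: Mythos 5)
The paper itself gives no proof of this proposition: it is explicitly ``recalled'' as a standard result, with the argument deferred to the cited reference \cite{LW24} (and ultimately to the general existence criteria for pullback attractors in \cite{Wang2012}). Your outline reproduces exactly that standard route --- necessity via a closed neighborhood of $\mathcal A$ (this is where the neighborhood-closedness of $\mathcal D$ is used, as you note), sufficiency via the pullback $\omega$-limit sets $\omega(K,\tau)$ with the usual verification of compactness, attraction, invariance, minimality, the complete-orbit characterization by a diagonal argument, and periodicity read off from the defining formula --- so it matches the approach the paper relies on; as written it is a correct roadmap rather than a worked-out proof, but no step in it is wrong or missing.
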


\section{Abstract Formulation of Stochastic Equations}
\setcounter{equation}{0}

In this section, we discuss the assumptions on the
nonlinear terms   and reformulate  \eqref{eu1}
in the fixed domain $\mathcal{O}$.
Throughout this paper, we assume
  ${\sigma} _{i,k}\in L^\infty( { {\widetilde {\mathcal O}} } )$
  for  $i=1,2$ and every $k\in \N$ such that
   \begin{align}\label{cs1}
   \sum\limits_{k = 1}^\infty  {\left\| {\sigma _{i,k} } \right\|_{L^\infty
   \left( {\widetilde \calo} \right)}^2 }<\infty,
   \end{align}
   and
   \begin{align}\label{cs2}
   \left| {\frac{{\partial \sigma_{i,k}\left( {x} \right)}}{{\partial x}}}
\right| \le \sigma^{*}_{i,k} \left( x \right),
\end{align}
where $\sigma^{*}_{i,k} \in {L^\infty( { {\widetilde {\mathcal O}} } )}$
with
$
\sum\limits_{k = 1}^\infty  {\left\| {\sigma _{i,k}^* }
\right\|_{L^\infty  \left( {\widetilde \calo} \right)}^2 }  < \infty$.
For  every $ k \in \N$,
suppose   $\varpi_{1,k}: \R\rightarrow \R$  is
a  smooth function such that  for  all  $s_1,s_2\in {\mathbb{R}}$,
\begin{align}
& \left| {\varpi _{1,k} \left( s_1 \right)} \right| \le \beta _{k}  + \gamma _{k} \left| s_1 \right|, \label{cv1} \\
& \left| {\varpi _{1,k} \left( {s_1 } \right) - \varpi _{1,k} \left( {s_2 } \right)} \right|
\le L_{k} \left| {s_1  - s_2 } \right|, \label{cv3}
 \end{align}
where $\beta _{k}, \gamma _{k}$ and $L_{k}$ are positive constants satisfying
$\sum\limits_{k = 1}^\infty ( {\beta _{k}^2 }  + \gamma _{k}^2
  + L_{k}^2 ) < \infty.
$

 For the nonlinear drift term $F$, we assume that
  for all $x\in \widetilde{\mathcal O}$ and $s\in {\mathbb{R}}$,
\begin{align}
 &F\left( x,s \right)s \le  - \lambda_1  \left| s \right|^p +\varphi_1(x), \label{cf1}\\
  &\left| {F\left(x, s \right)} \right| \le \lambda _2
   \left| s \right|^{p - 1} +\varphi_2(x), \label{cf2}\\
& \frac{{\partial F\left( {x,s} \right)}}{{\partial s}} \le \varrho,\label{cf3}\\
&\left| {\frac{{\partial F\left( {x,s} \right)}}{{\partial x}}}
\right| \le \psi _3 \left( x \right),\label{cf4}
 \end{align}
 where $p\geq2$, $\lambda_1$, $\lambda_2$ and $\varrho$  are positive constants
 and $\varphi _1, \varphi _2,\psi _3
  \in {L^\infty( { {\widetilde {\mathcal O}} } )}.$
 In the sequel, we will  assume
 that
\begin{align}\label{La}
 \sum\limits_{k = 1}^\infty  {(\gamma _{k}^2+\varpi _{2,k}^2 })
 <{\frac 14} {\overline \lambda},
\end{align}
where $\overline \lambda =\min\{\lambda_1, \gamma\}$.   Then     we fix a positive number $\lambda$
such that
 \begin{align}\label{La1}
4 \sum\limits_{k = 1}^\infty  {(\gamma _{k}^2+\varpi _{2,k}^2) }
<\lambda< \overline\lambda,
 \end{align}
and set
\begin{equation}
\label{Ftof}
f(x,s) = F(x,s) +\lambda s
\end{equation}
 for all
$x\in \widetilde{\mathcal O}$ and $s\in {\mathbb{R}}$.
Then it follows   from \eqref{cf1}-\eqref{cf4} that
there exist positive numbers
$\alpha_1$, $\alpha_2$, $\varrho_1$,
$c_1$  and $c_2$   such that
\begin{align}
 &f\left( x,s \right)s \le  - \alpha_1  \left| s \right|^p +\varphi_1(x) +c_1, \label{c3.1a2}\\
  &\left| {f\left(x, s \right)} \right| \le \alpha _2 \left| s \right|^{p - 1} +\varphi_2(x)
  +c_2 , \label{c3.2a2}\\
& \frac{{\partial f\left( {x,s} \right)}}{{\partial s}} \le \varrho_1 ,\label{c3.3}\\
&\left| {\frac{{\partial f\left( {x,s} \right)}}{{\partial x}}} \right| \le \psi_3 \left( x \right).\label{c3.4}
 \end{align}
For convenience, we write $\psi_1(x) = \varphi_1(x) +c_1$
and  $\psi_2(x) = \varphi_2(x) +c_2$
 for $x\in \widetilde{\mathcal O}$ and $s\in {\mathbb{R}}$.

 With this notation,     \eqref{c3.1a2}
  and \eqref{c3.2a2} can be written as
\begin{align}
 &f\left( x,s \right)s \le  - \alpha_1  \left| s \right|^p +\psi_1(x) ,
  \label{c3.1}\\
  &\left| {f\left(x, s \right)} \right| \le \alpha _2 \left|
  s \right|^{p - 1} +\psi_2(x) . \label{c3.2}
 \end{align}

By   \eqref{eu1} and  \eqref{Ftof}  we get  for  $ t > \tau$,
\begin{equation}{\label{aehu2}}
\left\{ \begin{array}{l}
d\hat{u}^\varepsilon\left( t \right) -( \Delta \hat{u}^\varepsilon\left( t \right)-\lambda u^\varepsilon(t))dt
+\alpha \hat{v}^\varepsilon\left( t \right)dt
= (f\left( {x,\hat{u}^\varepsilon\left( t \right)} \right)+G_1(t,x))dt\\
\quad\quad + \sum\limits_{k = 1}^\infty  {\left( {{{\sigma} _{1,k}}\left( x \right) + {\varpi _{1,k}}
 \left( {\hat{u}^\varepsilon\left( t \right)} \right)}
  \right)d{W_k}\left( t \right),\quad x \in {\mathcal O_\varepsilon }},  \\
  d\hat{v}^\varepsilon\left( t \right)+\gamma\hat{v}^\varepsilon\left( t \right)dt -\beta\hat{u}^\varepsilon\left( t \right)dt
= G_2(t,x)dt \\
\quad\quad+ \sum\limits_{k = 1}^\infty  {\left( {{{\sigma} _{2,k}}\left( x \right) + {\varpi _{2,k}}
{\hat{v}^\varepsilon\left( t \right)}}
  \right)d{W_k}\left( t \right),\quad x \in {\mathcal O_\varepsilon }},  \\
\frac{{\partial \hat{u}^\varepsilon}}{{\partial {\nu_\varepsilon }}} = 0,
\quad\frac{{\partial \hat{v}^\varepsilon}}{{\partial {\nu_\varepsilon }}} = 0,
\quad x \in \partial {\mathcal O_\varepsilon },
\end{array} \right.
\end{equation}
 with  initial condition
 \begin{equation}{\label{aehu3}}
\hat u^\varepsilon(\tau,x)={\hat\xi}^\varepsilon_1(x),\quad 
\hat v^\varepsilon(\tau,x)={\hat\xi}^\varepsilon_2(x),\quad x\in \mathcal O_\varepsilon.
 \end{equation}

We  now  transfer  problem  \eqref{aehu2}-\eqref{aehu3}  into
a boundary value problem  in  the fixed domain $\mathcal O$.
As usual,   define
$T_\varepsilon: \mathcal O_\varepsilon \rightarrow \mathcal O$
by
$T_\varepsilon (x^{\ast}, x_{n+1})
= \left (x^{\ast}, {\frac {x_{n+1}}{\varepsilon g(x^{\ast})}} \right )
$ for $x= (x^{\ast}, x_{n+1}) \in
 \mathcal O_\varepsilon$. Let $y=(y^{\ast}, y_{n+1}) =
T_\varepsilon (x^{\ast}, x_{n+1})$. Then we  have
\[
x^{\ast} = y^{\ast}  ,\quad\quad x_{n+1}  = \varepsilon g\left( {y^{\ast} } \right)y_{n+1}.
\]

It follows from \cite{liu2010poisson}
 that  the  Laplace operator  in the original
variable $x\in {\mathcal{O}}_\varepsilon $ and  in the new variable
$y\in \mathcal{O}$
is related by
\[
\Delta_x \hat u(x)
=  \frac{1}{g}\text{div}_y ( P_\varepsilon  u(y)),
\]
 where we denote by  $\hat u(x)=u(y)$
and $P_\varepsilon $ is the operator  given by
\[
P_\varepsilon  u (y)    = \left ( {\begin{array}{*{20}c}
   {gu_{y_1 }   - g_{y_1 } y_{n + 1} u_{y_{n + 1} }  }  \\
    \vdots   \\
   {gu_{y_n }  - g_{y_n } y_{n + 1} u_{y_{n + 1} }  }  \\
   { - \sum\limits_{i = 1}^n {y_{n + 1} g_{y_i } u_{y_i }  }  + \frac{1}{{\varepsilon
   ^2 g}}\left( {1 + \sum\limits_{i = 1}^n {\left( {\varepsilon y_{n + 1} g_{y_i } }
   \right)^2 } } \right)u_{y_{n + 1} }  }  \\
\end{array}} \right ).
\]

For  $y = \left( {y^{\ast}  ,y_{n+1} } \right) \in \mathcal O$ and $s\in {\mathbb{R}}$,
we introduce

 \[
F_\varepsilon  \left( {y^{\ast}  ,y_{n+1} },s \right) = F\left(
 {y^{\ast} ,\varepsilon g\left( y^{\ast}  \right)y_{n+1} }, s \right),
 \qquad  F_0 \left( y^{\ast}, s\right) = F\left({y^{\ast} ,0},s \right),
\]
\[
f_\varepsilon  \left( {y^{\ast}  ,y_{n+1} },s \right) = f\left(
 {y^{\ast} ,\varepsilon g\left( y^{\ast}  \right)y_{n+1} },s \right),
 \qquad  f_0 \left( y^{\ast}, s\right) = f\left( {y^{\ast} ,0},s \right),
\]
\[G_{i,\varepsilon}  \left( {t,y^{\ast}  ,y_{n+1} } \right) = G_i\left(
 {t,y^{\ast} ,\varepsilon g\left( y^{\ast}  \right)y_{n+1} } \right),
 \qquad  G_{i,0} \left( t,y^{\ast}\right) = G_i\left( {t,y^{\ast} ,0}\right),
\]
and
\[
\sigma_{i,k,\varepsilon}  \left( {y^{\ast} ,y_{n+1} } \right) = \sigma_{i,k} \left(
 {y^{\ast} ,\varepsilon g\left( y^{\ast} \right)y_{n+1} } \right),
 \qquad \sigma_{i,k,0}  \left( {y^{\ast} } \right) = \sigma_{i,k} \left( {y^{\ast}, 0} \right).
 \]
Then   problem \eqref{aehu2}-\eqref{aehu3}  is equivalent to the following  system
\begin{equation}{\label{aeu1}}
\left\{ \begin{array}{l}
 du^\varepsilon -  ( {\frac 1g} \text{div}_y ( P_\varepsilon  u^\varepsilon ) -\lambda u^\varepsilon)dt+\alpha v^\varepsilon(t)dt
 = f_\varepsilon\left(y, u^\varepsilon \right) dt +G_{1,\varepsilon}(t,y)dt\\
 \quad \quad + \sum\limits_{k = 1}^\infty  {\left( {{\sigma _{1,k,\varepsilon}}\left( y \right) +
{\varpi_{1,k}}\left( {{u}^\varepsilon\left( t \right)} \right)}
   \right)d{W_k}\left( t \right),\quad y = \left( {y^{\ast}  ,y_{n+1} } \right) \in \mathcal O,\quad t > \tau,} \\
   d{v}^\varepsilon\left( t \right)+\gamma{v}^\varepsilon\left( t \right)dt -\beta{u}^\varepsilon\left( t \right)dt
= G_{2,\varepsilon}(t,y)dt \\
\quad\quad+ \sum\limits_{k = 1}^\infty  {\left( {{{\sigma} _{2,k,\varepsilon}}\left( y \right) + {\varpi _{2,k}}
{{v}^\varepsilon\left( t \right)}}
  \right)d{W_k}\left( t \right),\quad y  \in \mathcal O,\quad   t > \tau},  \\
  P_\varepsilon  u^\varepsilon \cdot \nu = 0,\quad P_\varepsilon  v^\varepsilon \cdot \nu = 0,\quad y\in\partial \mathcal O , \\
 \end{array} \right.
\end{equation}
with  initial condition
\begin{equation}{\label{aeu2}}
u^\varepsilon(\tau,y)=\xi^\varepsilon_1(y)
={\hat \xi}^\varepsilon_1(T^{-1}_\varepsilon(y)),\quad v^\varepsilon(\tau,y)=\xi^\varepsilon_2(y)
={\hat \xi}^\varepsilon_2(T^{-1}_\varepsilon(y)),\quad y\in \mathcal O,
\end{equation}
where $\nu$ is the unit outward normal vector  to $\partial \mathcal O$.

Now we want to write
 equation  (\ref{aeu1})  as  an  abstract evolutionary equation.
 We first introduce  the inner product $\left( { \cdot , \cdot } \right)_{H_g(\mathcal O) }$
 on $L^2(\mathcal O)$ defined by
\[
\left( {u,v} \right)_{H_g(\mathcal O) }  = \int_{\mathcal O}
 {guvdy},\quad \text{for all}\,\, u,v\in L^2(\mathcal O)
\]
and denote by $H_g(\mathcal O)$ the space equipped with this inner product.
Since $g$  is a continuous function on $\overline{\mathcal Q}$ and satisfies (\ref{a1}),
one  can easily show  that $H_g(\mathcal O)$  is a Hilbert space with  norm
equivalent to the natural norm of $L^2(\mathcal O)$.

For  $0<\varepsilon\leq1$,
 we  introduce  a
 bilinear form $a_\varepsilon  \left( { \cdot , \cdot } \right)
$:$\,\,H^1 \left( \mathcal O  \right) \times H^1 \left( \mathcal O \right) \to {\mathbb{R}}$,
 given by
\begin{eqnarray}\label{a4.1}
a_\varepsilon  \left( {u,v} \right) =
\left (J^* \nabla_y u, \  J^*\nabla_y v \right ) _{H_g \left( \mathcal O \right)}
\quad \mbox{for} \ u, \ v \in H^1 \left( \mathcal O  \right),
 \end{eqnarray}
 where
 $$
 J^* \nabla_y u (y)
 =     \left( {u_{y_1 }  - \frac{{g_{y_1 } }}
{g}y_{n + 1} u_{y_{n + 1} } , \ldots ,u_{y_n }  - \frac{{g_{y_n } }}
{g}y_{n + 1} u_{y_{n + 1} } ,\frac{1}{{\varepsilon g}}u_{{y_{n + 1} } } } \right).
$$
Let $H_\varepsilon ^1(\mathcal O)$ be the space $H^1(\mathcal O)$ endowed with the norm
\begin{equation}\label{a4.2}
\| u \|_{H_\varepsilon ^1(\mathcal O) }  = \left( \|u\|^2_{H^1(\mathcal O)}    + \frac{1}{\varepsilon ^2 }
\| u _{y_{n+1}}\|^2_{L^2 (\mathcal O)}  \right)^{\frac{1}{2}}.
\end{equation}
We see that there exist positive constants
$\varepsilon_0$,  $\eta_1$ and $\eta_2$ such that
for all $0<\varepsilon<\varepsilon_0$ and   $u\in H^1(\mathcal O),$
\begin{equation}\label{4.1}
\eta_1\left\| u \right\|_{H_\varepsilon ^1(\mathcal O) }^2  \le a_\varepsilon  \left( {u,u} \right)
 + \left\| u \right\|_{L^2({\mathcal O})}^2 \le \eta_2\left\| u \right\|_{H_\varepsilon ^1(\mathcal O) }^2.
\end{equation}
Hereafter,  we fix the number
 $\varepsilon_0>0$
 such that \eqref{4.1} is fulfilled for all $\varepsilon \in (0, \varepsilon_0)$.

Denote by  $A_\varepsilon$
the  unbounded operator on $H_g(\mathcal O)$ with domain
$
D\left( {A_\varepsilon  } \right) = \left\{ {u \in H^2 \left( \mathcal O  \right),
P_\varepsilon  u  \cdot \nu = 0\,\, \text{on}\,\, \partial \mathcal O } \right\}
$  as  defined by
\[
A_\varepsilon  u =  - \frac{1}{g}\text{div}_y (P_\varepsilon  u),\quad  u\in D\left( {A_\varepsilon  } \right).
\]
Then we  have
\begin{equation}\label{aae4.1}
a_\varepsilon  \left( {u,v} \right) = \left( {A_\varepsilon  u,v} \right)_{H_g({\mathcal O}) },\quad
\forall \;u \in D\left( {A_\varepsilon  } \right),\forall\; v \in H^1(\mathcal O).
\end{equation}
By   $ A_\varepsilon$,
 problem    (\ref{aeu1})-(\ref{aeu2}) can be written  as
\begin{equation}\label{aeu3}
\left\{ \begin{array}{l}
 du^\varepsilon + (A_\varepsilon  u^\varepsilon+ \lambda u^\varepsilon)dt+\alpha v^\varepsilon (t)dt
 =  {f_\varepsilon\left(y, {u^\varepsilon} \right)}dt+G_{1,\varepsilon}(t,y)dt\\
  \quad \quad + \sum\limits_{k = 1}^\infty  {\left( {{\sigma _{1,k,\varepsilon}}\left( y\right) +
 {\varpi _{1,k}}\left( {{u}^\varepsilon\left( t \right)} \right)}
   \right)d{W_k}\left( t \right),\quad t > \tau,} \\
      d{v}^\varepsilon\left( t \right)+\gamma{v}^\varepsilon\left( t \right)dt -\beta{u}^\varepsilon\left( t \right)dt
= G_{2,\varepsilon}(t,y)dt \\
\quad\quad+ \sum\limits_{k = 1}^\infty  {\left( {{{\sigma} _{2,k,\varepsilon}}\left( y \right) + {\varpi _{2,k}}
{{v}^\varepsilon\left( t \right)}}
  \right)d{W_k}\left( t \right),\quad   t > \tau},  \\
u^\varepsilon\left( {\tau} \right) =\xi^\varepsilon_1,\quad v^\varepsilon\left( {\tau} \right) =\xi^\varepsilon_2.  \\
 \end{array} \right.
\end{equation}

To reformulate   system (\ref{eu3})-(\ref{eu4}), we
  introduce  an  inner product $\left( { \cdot , \cdot } \right)_{H_g(\mathcal Q) }$
 on $L^2(\mathcal Q)$  as defined by
\[
\left( {u,v} \right)_{H_g(\mathcal Q) }
 = \int_{\mathcal Q}  {guvdy^{\ast} },\quad \text{for all}\,\, u,v\in L^2(\mathcal Q),
\]
and denote by $H_g(\mathcal Q)$ the space equipped with this inner product.
Let     $a_0 \left( { \cdot , \cdot } \right)
$:$\,\,H^1 \left( \mathcal Q\right) \times H^1 \left( \mathcal Q \right) \to {\mathbb{R}}$
be   		a bilinear form
 given by
\[
a_0 \left( {u,v} \right) = \int_{\mathcal Q} {g\nabla}  u\cdot\nabla v dy^{\ast}.
\]
Denote by $A_0$  the
  unbounded operator  on $H_g(\mathcal Q)$ with domain
$
D\left( {A_0  } \right) = \left\{ {u  \in H^2 \left( \mathcal Q  \right),
 \frac{\partial u}{\partial \nu_0} = 0\,\, \text{on}\,\, \partial \mathcal Q} \right\}
$ as  defined by
\[
A_0  u =  - \frac{1}{g}
\sum\limits_{i=1}^{n} {(gu_{y_i})_{y_i}}\quad u \in D\left( {A_0  } \right).
\]
Then we  have
\[
a_0  \left( {u,v} \right) = \left( {A_0  u,v} \right)_{H_g(\mathcal Q) } ,
\quad\forall\; u \in D\left( {A_0 } \right), \forall\; v \in H^1(\mathcal Q).
\]
By  $A_0$,    system (\ref{eu3})-(\ref{eu4}) can be written as
\begin{equation}\label{aeu4}
\left\{ \begin{array}{l}
 du^0 + (A_0  u^0 + \lambda u^0)dt +\alpha v^0(t)dt=   f_0\left(y^{\ast}, u^0
  \right) dt+G_{1,0}(t,y^\ast) dt\\
  \quad + \sum\limits_{k = 1}^\infty  {\left( {{\sigma _{1,k,0}}\left( y^*  \right) +
  {\varpi _{1,k}}\left( {{u}^0\left( t \right)} \right)}
   \right)d{W_k}\left( t \right),\quad t > \tau,} \\
      d{v}^0\left( t \right)+\gamma{v}^0\left( t \right)dt -\beta{u}^0\left( t \right)dt
= G_{2,0}(t,y^{\ast})dt \\
\quad\quad+ \sum\limits_{k = 1}^\infty  {\left( {{{\sigma} _{2,k,0}}\left( y \right) + {\varpi _{2,k}}
{{v}^0\left( t \right)}}
  \right)d{W_k}\left( t \right),\quad   t > \tau},  \\
 u^0\left( {\tau}\right) = \xi^0_1,\quad  v^0\left( {\tau}\right) = \xi^0_2.  \\
 \end{array} \right.
\end{equation}

Our main purpose  of the paper is to prove that systems \eqref{aeu3}
  and \eqref{aeu4} possess  pullback
measure attractors $\{\mathcal A_\varepsilon(\tau)\}_{\tau \in \R}$ in $L^2(\mathcal {O})\times L^2(\mathcal {O})$
   and   pullback
measure attractors $\{\mathcal A_0(\tau)\}_{\tau \in \R}$
 in   $L^2(\mathcal Q)\times L^2(\mathcal Q)$, respectively.
 Furthermore,  we will study  the upper-continuous of
 $\mathcal A_\varepsilon$   as $\varepsilon \to 0$.

For simplicity,   hereafter, we write $\mathbb L^2(\calo)=L^2(\calo)\times L^2(\calo)$,
  $\mathbb H_g(\calo)=H_g(\calo)\times H_g(\calo)$ and $\mathbb H^1_\varepsilon(\calo)
  =H^1_\varepsilon(\calo)\times H^1_\varepsilon(\calo)$.
 $\mathbb L^2(\mathcal Q)$, $\mathbb H_g(\mathcal Q)$ and $\mathbb H^1_\varepsilon(\mathcal Q)$ is defined  by analogy.
For a Banach space $X$ and $\tau\in \R$, we  use
 $L^2_{\mathcal{F}_\tau}(X) $
 to denote the space of all ${\mathcal F_\tau}$-measurable,
$X$-valued random variables $\varphi$
with $\E\|\varphi\|_{X}^2<\infty$,
where $\mathbb E$  means   the mathematical expectation. By  the arguments of  \cite[Theorem 6.1]{WGW21},
one can verify that if
\eqref{cs1}-\eqref{cf4} hold,
then  for any $\xi^\varepsilon=(\xi^\varepsilon_1, \xi^\varepsilon_2)  \in
L^2_{\mathcal{F}_\tau}( \mathbb L^2(\mathcal {O}))$ (respectively, $\xi^0=(\xi^0_1, \xi^0_2)  \in
L^2_{\mathcal{F}_\tau}( \mathbb L^2(\mathcal {Q}) ))$,
system \eqref{aeu3} (respectively,  \eqref{aeu4}) has a unique solution
  in the   sense of Definition 2.1 in \cite{WGW21}, which is
  denoted by   $w^\varepsilon(t,\tau,\xi^\varepsilon)=
  (u^\varepsilon(t,\tau,\xi^\varepsilon),v^\varepsilon(t,\tau,\xi^\varepsilon))$
  or $w^\varepsilon(t)=(u^\varepsilon(t), v^\varepsilon(t))$
  (respectively, $w^0(t,\tau,\xi^0)=(u^0(t,\tau,\xi^0),v^0(t,\tau,\xi^0))$ or $w^0(t)=(u^0_1(t),v^0_1(t))$).

For convenience, we write
  $Z_0=\mathbb L^2(\mathcal Q)$ and $Z_1=\mathbb L^2(\mathcal O)$.
  Give a   subset
  $E$ of $\mathcal P_2 \left( Z_i \right)$
  for $i=1,2$,  define
 \[
\|E  \|_{\mathcal P_2 \left( Z_i \right)}
=\inf \left\{ {r > 0:
	\sup_{\mu\in E}
	\left (
	\int_{Z_i} \|z \|_{Z_i}^2
	\mu (dz)\right ) ^{\frac 12}
	\le r
  }
	   \right\},
\]
with the convention that
inf $\emptyset =\infty$.
If  $E$ is
 a  bounded
subset of  $\mathcal P_2 \left( Z_i \right)$, then
$
\|E  \|_{\mathcal P_2 \left( Z_i \right)}<\infty
$.
  For  $i= 0, 1$, let  $\mathcal{D}_i$ be the collection of   families  of   bounded nonempty subsets
 of $\mathcal P_2 \left( Z_i \right)$ as given by
\[
\mathcal D_i = \left\{ {D
	 = \left\{ {D\left( \tau  \right) \subseteq \mathcal P_2 \left( Z_i \right):
\emptyset\ne  D\left( \tau  \right) \,\, \text{bounded in }\,\,\mathcal P_2 \left( Z_i \right),\,\,\tau  \in \R} \right\}:\,\,
\mathop {\lim }\limits_{\tau  \to  - \infty } e^{\zeta \tau }
\left\| {D\left( \tau  \right)} \right\|_{\mathcal P_2 (Z_i )}^2  = 0
} \right\},
\]
where  $\zeta=\lambda  - 4\sum\limits_{k = 1}^\infty  ({\gamma _k^2+\varpi_{2,k}^2})>0$
by  \eqref{La1}.

Throughout this paper, we assume
\begin{align}\label{acg}
\int_{ - \infty }^\tau  {e^{\zeta t}( \left\| {G_1\left( t,\cdot \right)} \right\|_{L^\infty( { {\widetilde {\mathcal O}} } )}^2
+\left\| {G_2\left( t,\cdot \right)} \right\|^2_{H^{1,\infty}( { {\widetilde {\mathcal O}} } )}) }dt
< \infty ,\quad\forall \tau  \in \R.
\end{align}

\section{Main Results}
\setcounter{equation}{0}

In this subsection, we present the main results of our paper.

As usual, if $\varphi :\mathbb L^2(\calo) \to \R
$ is a bounded Borel function,
then for $ r\leq t$ and $\xi=(\xi_1,\xi_2)  \in \mathbb L^2(\calo)$,   we  set
\[
\left( {p^\varepsilon({t,r}) \varphi } \right)\left( \xi  \right)
= \E\left( {\varphi \left( {u^\varepsilon\left( {t,r,\xi_1 } \right), v^\varepsilon\left( {t,r,\xi_2 } \right)  } \right)} \right)
\]
and
\[
P^\varepsilon\left( {r,\xi ;t,\Gamma } \right) = \left( {p^\varepsilon(t,r) 1_\Gamma  } \right)
\left( \xi  \right)  ,
\]
where  $
\Gamma  \in \mathcal B\left( \mathbb L^2(\calo)\right)$
 and
 $1_{\Gamma}$ is the characteristic function of $\Gamma$.

The following
 results
are standard  and the proof is  omitted.

\begin{lem}\label{markovp1}
 If \eqref{cs1}-\eqref{La} hold, then

(i) The family $\left\{ {p^\varepsilon({t,r}) } \right\}_{ r \le t} $ is Feller;
 that is, for any ${ r \le t}$, the function $
p^\varepsilon({t,r}) \varphi\in C_b(L^2(\calo))
$  if    $\varphi \in C_b(L^2(\calo))$.

(ii)   For every $r
\in \R$ and $
\xi^\varepsilon  \in \mathbb L^2(\calo)$, the process $
\left\{ {u^\varepsilon \left( {t,r,\xi^\varepsilon } \right),v^\varepsilon \left( {t,r,\xi^\varepsilon } \right)} \right\}_{t \ge r}$
 is a $\mathbb L^2(\calo)$-valued Markov
process.
\end{lem}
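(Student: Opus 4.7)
The plan is to derive both parts from two standard ingredients that the hypotheses \eqref{cs1}--\eqref{La} already deliver: the well-posedness of \eqref{aeu3} established in \cite{WGW21}, and pathwise uniqueness combined with a mean-square continuous dependence estimate. The calculation is long but routine, which is why the authors omit it.

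For the Feller property (i), the heart of the matter is the estimate
\begin{equation*}
\E\|w^\varepsilon(t,r,\xi) - w^\varepsilon(t,r,\eta)\|_{\mathbb L^2(\calo)}^2
\le C_{t,r}\,\|\xi-\eta\|_{\mathbb L^2(\calo)}^2,
\qquad \xi,\eta\in \mathbb L^2(\calo),
\end{equation*}
for some $C_{t,r}$ independent of the data. To obtain this, set $\widetilde u = u^\varepsilon(\cdot,r,\xi)-u^\varepsilon(\cdot,r,\eta)$ and $\widetilde v = v^\varepsilon(\cdot,r,\xi)-v^\varepsilon(\cdot,r,\eta)$, then apply It\^o's formula to a weighted sum of the form $\beta\|\widetilde u\|_{H_g(\calo)}^2 + \alpha\|\widetilde v\|_{H_g(\calo)}^2$, so that the off-diagonal coupling terms $\alpha v$ and $\beta u$ cancel. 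The contribution of the principal part through $a_\varepsilon$ is non-negative by \eqref{aae4.1} and \eqref{4.1}, the nonlinear drift difference is controlled by the one-sided Lipschitz bound \eqref{c3.3}, and the martingale term is handled by It\^o's isometry together with \eqref{cv3} and the summability of $\{\gamma_k\}$, $\{L_k\}$, and $\{\varpi_{2,k}\}$ built into \eqref{La}. Gronwall then yields the claimed estimate. With it in hand, for $\xi_n\to\xi$ in $\mathbb L^2(\calo)$ one has $w^\varepsilon(t,r,\xi_n)\to w^\varepsilon(t,r,\xi)$ in probability, so for any $\varphi\in C_b(\mathbb L^2(\calo))$ dominated convergence (via $|\varphi|\le\|\varphi\|_\infty$) delivers continuity of $\xi\mapsto (p^\varepsilon(t,r)\varphi)(\xi)$; boundedness is immediate from $\|p^\varepsilon(t,r)\varphi\|_\infty\le\|\varphi\|_\infty$.

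For the Markov property (ii), the main step is the cocycle identity
\begin{equation*}
w^\varepsilon(t,r,\xi^\varepsilon) = w^\varepsilon\bigl(t,s,w^\varepsilon(s,r,\xi^\varepsilon)\bigr)
\quad P\text{-a.s., for all } r\le s\le t,
\end{equation*}
which follows from pathwise uniqueness: both sides solve \eqref{aeu3} on $[s,t]$ with the same initial datum $w^\varepsilon(s,r,\xi^\varepsilon)$. Representing $w^\varepsilon(t,s,\cdot)$ as a measurable functional of the Wiener increments $\{W_k(u)-W_k(s):s\le u\le t\}$, which are independent of $\mathcal F_s$, one then obtains for every bounded Borel $\varphi$
\begin{equation*}
\E\bigl[\varphi(w^\varepsilon(t,r,\xi^\varepsilon))\,\big|\,\mathcal F_s\bigr]
= (p^\varepsilon(t,s)\varphi)\bigl(w^\varepsilon(s,r,\xi^\varepsilon)\bigr)
\quad P\text{-a.s.},
\end{equation*}
which is precisely the Markov property. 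The only mildly delicate point I anticipate is justifying the cocycle identity when the intermediate datum $w^\varepsilon(s,r,\xi^\varepsilon)$ is random rather than deterministic, but this is handled by a measurable-selection argument as in Da Prato--Zabczyk and is identical to the autonomous case. No other step should present a real obstacle.
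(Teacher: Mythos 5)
Your proposal is correct and follows exactly the standard route the paper itself relies on: the paper offers no proof of this lemma (it states only that the results are standard and omits the argument), and the intended argument is precisely yours --- a mean-square continuous-dependence estimate obtained from It\^o's formula applied to $\beta\|\widetilde u\|_{H_g(\calo)}^2+\alpha\|\widetilde v\|_{H_g(\calo)}^2$ (so the coupling terms cancel, with \eqref{c3.3}, \eqref{cv3} and \eqref{La} controlling the drift and diffusion differences and Gronwall closing the estimate) for the Feller property, and pathwise uniqueness plus independence of the Wiener increments from $\mathcal F_s$ for the Markov property. I see no gaps.
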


Given $t\geq r$ and $\mu \in \mathcal P(\mathbb L^2(\calo))$, define
\[
p_ * ^\varepsilon  \left( {t,r } \right)\mu \left(  \cdot  \right) =
\int_{\mathbb L^2(\calo)} {P^\varepsilon\left( {r ,\xi; t, \cdot} \right)} \mu \left( {d\xi} \right).
\]
Then  $p_{\ast}^\varepsilon(t,r): \mathcal P(\mathbb L^2(\calo))\rightarrow \mathcal P(\mathbb L^2(\calo))$ is the dual
operator of $p^\varepsilon({t,r})$.
By the definition of the solution of  \eqref{aeu3} we find that
  for all $t\geq r $,
$p_{\ast}^\varepsilon(t, r)$
maps
$ \mathcal P_2(\mathbb L^2(\calo))$
to $ \mathcal P_2(\mathbb L^2(\calo))$.

Given   $t\geq r$,
the operator
$p_{\ast}^0(t, r): \mathcal P_2(\mathbb L^2(\mathcal Q))\rightarrow \mathcal P_2(\mathbb L^2(\mathcal Q))$ associated with \eqref{aeu4}
can be  defined in the same
fashion
as
$p_{\ast}^\varepsilon(t, r)$.

We will also investigate the periodicity of pullback   measure attractors  of
system \eqref{eu3}-\eqref{eu4} for which we assume that
$G_1$ and $G_2$  are $\rho$-periodic in $t$ for some $\rho>0$.

By the similar argument as that of Lemma 4.1 in \cite{LWW21}, we get the
following lemma.
\begin{lem}\label{markovp2}
 Suppose  \eqref{cs1}-\eqref{La} hold. Then we have
the family $\left\{ {p^\varepsilon({t,r}) } \right\}_{  r \le t} $, $0<\varepsilon<\varepsilon_0$,
is $\rho$-periodic; that is,  for all $ t\geq r$,
\[
P^\varepsilon\left( {r,\xi ;t, \cdot } \right) = P^\varepsilon\left( {r+\rho,\xi ;t+\rho, \cdot } \right),
\quad\forall \xi  \in \mathbb L^2(\calo).
\]
\end{lem}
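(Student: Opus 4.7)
The strategy mirrors Lemma 4.1 of \cite{LWW21}: exploit the stationarity of increments of $\{W_k\}$ and the $\rho$-periodicity of $G_1$ and $G_2$ to show that the shifted solution process has the same law as the original one. Observe that in system \eqref{aeu3} the operator $A_\varepsilon$, the drift $f_\varepsilon$, and the noise coefficients $\sigma_{i,k,\varepsilon}$, $\varpi_{1,k}$, $\varpi_{2,k}$ are all time-independent; only the forcing terms $G_{1,\varepsilon}$ and $G_{2,\varepsilon}$ depend on $t$, and they inherit $\rho$-periodicity from $G_1$ and $G_2$ through their definitions in Section 3.

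My plan is as follows. First, introduce the shifted Wiener processes $\tilde W_k(t) := W_k(t+\rho) - W_k(\rho)$ for $t \in \R$ and $k \in \N$. By stationary increments and independence, $\{\tilde W_k\}_{k\in\N}$ is again a sequence of independent standard Wiener processes whose joint law coincides with that of $\{W_k\}_{k\in\N}$. Next, fix $r \in \R$ and $\xi \in \mathbb L^2(\calo)$, let $w^\varepsilon(\cdot, r+\rho, \xi)$ denote the solution of \eqref{aeu3} on $[r+\rho, \infty)$, and set $\tilde w(t) := w^\varepsilon(t+\rho, r+\rho, \xi)$ for $t \geq r$. A change of variable in the Itô integrals, combined with the identity $G_{i,\varepsilon}(s+\rho, \cdot) = G_{i,\varepsilon}(s, \cdot)$, shows that $\tilde w$ satisfies the same SDE system as $w^\varepsilon(\cdot, r, \xi)$, with the same autonomous coefficients and the same forcing, but driven by $\{\tilde W_k\}$ in place of $\{W_k\}$ and starting from $\xi$ at time $r$. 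Weak uniqueness for \eqref{aeu3}, together with the equality in law of $\{\tilde W_k\}$ and $\{W_k\}$, then forces $\tilde w$ and $w^\varepsilon(\cdot, r, \xi)$ to have the same distribution on $C([r,\infty); \mathbb L^2(\calo))$. In particular, for every $t \geq r$ and every $\Gamma \in \mathcal B(\mathbb L^2(\calo))$,
$$P^\varepsilon(r+\rho, \xi; t+\rho, \Gamma) = P(\tilde w(t) \in \Gamma) = P(w^\varepsilon(t, r, \xi) \in \Gamma) = P^\varepsilon(r, \xi; t, \Gamma),$$
which is the desired identity.

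The only technical point that requires care is the rigorous change of variable inside the stochastic integrals, namely verifying that $\int_{r+\rho}^{t+\rho}(\,\cdot\,)\, dW_k(s)$ corresponds to $\int_r^t (\,\cdot\,)\, d\tilde W_k(s)$ when the integrands are the autonomous coefficients evaluated along the time-shifted process. This follows directly from the definition of the Itô integral applied to the elementary integrands that approximate the stochastic integrals, together with the $\rho$-periodicity of the deterministic forcing; the remainder of the argument is essentially identical to Lemma 4.1 of \cite{LWW21}, which is why the authors state the result without a detailed proof.
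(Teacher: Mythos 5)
Your proposal is correct and follows essentially the same route as the paper, which omits the proof by appealing to Lemma 4.1 of \cite{LWW21}: that argument is precisely the Wiener-shift $\tilde W_k(t)=W_k(t+\rho)-W_k(\rho)$ combined with the $\rho$-periodicity of $G_{1,\varepsilon},G_{2,\varepsilon}$, the time-independence of all other coefficients, and uniqueness in law of solutions to \eqref{aeu3}.
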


 We now define
 a non-autonomous dynamical system
 $S^\varepsilon(t,\tau)$,
 $t\ge \tau$,
 for the family
 of operators
 $p_{\ast}^\varepsilon(t, \tau )$.
 Given
  $t\in \R^+$, $\tau\in \R$ and
  $0<\varepsilon<\varepsilon_0$,
  let $S^\varepsilon(t,\tau):\mathcal P_2(\mathbb L^2(\calo))\rightarrow\mathcal P_2(\mathbb L^2(\calo))$
  be the map given by
\be \label{sp}
S^\varepsilon(t,\tau)\mu
=p_{\ast}^\varepsilon(\tau+t,\tau)
\mu ,
\quad \forall  \
 \mu \in \mathcal P_2(\mathbb L^2(\calo)).
\ee

Similarly,
given
$t\in \R^+$ and  $\tau\in \R$,
let $S^0 (t,\tau):\mathcal P_2(\mathbb L^2(
\mathcal Q
))\rightarrow\mathcal P_2(\mathbb L^2(
\mathcal Q
))$
be the map given by
$$
S^0(t,\tau)\mu
=p_{\ast}^0(\tau+t,\tau)
\mu ,
\quad \forall  \
\mu \in \mathcal P_2(\mathbb L^2(
\mathcal Q
)).
$$

Then $S^\varepsilon(t,\tau)$,
$t\ge \tau$,  is a continuous non-autonomous dynamical
system in  $\mathcal P_2(\mathbb L^2(\calo))$
as stated below.

\begin{lem}\label{dy} Suppose \eqref{cs1}-\eqref{La}   hold.
Then
 $S^\varepsilon(t,\tau)$,
$t\ge \tau$,
 is a continuous non-autonomous dynamical
 system in  $\mathcal P_2(\mathbb L^2(\calo))$ generated
by \eqref{aeu3};
more precisely,
   $S^\varepsilon(t,\tau):
 \mathcal P_2(\mathbb L^2(\calo))\rightarrow \mathcal P_2(\mathbb L^2(\calo))$
 satisfies the following conditions

(a) $S^\varepsilon(0,\tau)=I_{\mathcal P_2(\mathbb L^2(\calo))}$, for all $\tau\in \R$;

(b) $S^\varepsilon(s+t,\tau)=S^\varepsilon(t,\tau+s)\circ S^\varepsilon(s,\tau)$, for any $\tau \in \R$ and $t,s\in \R^+$;

(c)  $S^\varepsilon(t,\tau): \mathcal P_2(\mathbb L^2(\calo))\rightarrow \mathcal P_2(\mathbb L^2(\calo))$
is continuous, for every $\tau\in \R$ and $t\in \R^+$.
\end{lem}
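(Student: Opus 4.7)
The plan is to verify the three properties (a)--(c) of the lemma by unpacking the definition of $S^\varepsilon(t,\tau)$ in terms of the dual Markov semigroup $p_\ast^\varepsilon(\tau+t,\tau)$ and invoking the standard facts recorded in Lemmas \ref{markovp1} and \ref{markovp2}. The well-posedness of \eqref{aeu3} stated just before the lemma ensures that $p_\ast^\varepsilon(t,r)$ maps $\mathcal P_2(\mathbb L^2(\calo))$ into itself, so $S^\varepsilon(t,\tau)$ is a well-defined self-map on $\mathcal P_2(\mathbb L^2(\calo))$.

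For (a), I would observe that by uniqueness of solutions $w^\varepsilon(\tau,\tau,\xi)=\xi$, whence $P^\varepsilon(\tau,\xi;\tau,\Gamma)=1_\Gamma(\xi)$ for every Borel $\Gamma\subseteq \mathbb L^2(\calo)$. Integrating against $\mu$ gives $p_\ast^\varepsilon(\tau,\tau)\mu=\mu$, which is exactly $S^\varepsilon(0,\tau)=I$. For (b), the cocycle identity reduces to the Chapman--Kolmogorov equation for the transition kernel $P^\varepsilon(r,\xi;t,\cdot)$, which is an immediate consequence of the Markov property in Lemma \ref{markovp1}(ii). More precisely, for any $\mu\in\mathcal P_2(\mathbb L^2(\calo))$, $\tau\in\R$, $s,t\ge 0$ and any Borel $\Gamma$,
\begin{align*}
\bigl(S^\varepsilon(s+t,\tau)\mu\bigr)(\Gamma)
&=\int_{\mathbb L^2(\calo)} P^\varepsilon(\tau,\xi;\tau+s+t,\Gamma)\,\mu(d\xi)\\
&=\int_{\mathbb L^2(\calo)}\!\!\int_{\mathbb L^2(\calo)} P^\varepsilon(\tau+s,\zeta;\tau+s+t,\Gamma)\,P^\varepsilon(\tau,\xi;\tau+s,d\zeta)\,\mu(d\xi),
\end{align*}
and a Fubini interchange rearranges this as $\bigl(p_\ast^\varepsilon(\tau+s+t,\tau+s)\circ p_\ast^\varepsilon(\tau+s,\tau)\mu\bigr)(\Gamma)=\bigl(S^\varepsilon(t,\tau+s)\circ S^\varepsilon(s,\tau)\mu\bigr)(\Gamma)$.

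For (c), I would exploit the fact emphasized in Section 2 that the metric $d_{\mathcal P_2(\mathbb L^2(\calo))}$ coincides with $d_{\mathcal P(\mathbb L^2(\calo))}$ and therefore metrizes weak convergence. So suppose $\mu_n\to\mu$ in $\mathcal P_2(\mathbb L^2(\calo))$, i.e.\ weakly. For every $\varphi\in C_b(\mathbb L^2(\calo))$, the Feller property of Lemma \ref{markovp1}(i) gives $p^\varepsilon(\tau+t,\tau)\varphi\in C_b(\mathbb L^2(\calo))$, and hence by the weak convergence of $\mu_n$ to $\mu$,
\begin{equation*}
\bigl(\varphi,S^\varepsilon(t,\tau)\mu_n\bigr)=\bigl(p^\varepsilon(\tau+t,\tau)\varphi,\mu_n\bigr)\longrightarrow \bigl(p^\varepsilon(\tau+t,\tau)\varphi,\mu\bigr)=\bigl(\varphi,S^\varepsilon(t,\tau)\mu\bigr).
\end{equation*}
This shows $S^\varepsilon(t,\tau)\mu_n\to S^\varepsilon(t,\tau)\mu$ weakly, hence in $d_{\mathcal P_2(\mathbb L^2(\calo))}$.

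Parts (a) and (b) are essentially bookkeeping once one uses the Markov property. The only mildly subtle point is in (c): one must be careful that the topology on $\mathcal P_2(\mathbb L^2(\calo))$ being used here is the weak topology (as opposed to the Wasserstein-$2$ topology that the $p$-moment notation might suggest), so that the Feller property alone — rather than some continuous dependence of solutions in second-moment norm — is sufficient. This is the point at which one needs to quote the remark in Section 2 identifying $d_{\mathcal P_p(X)}$ with $d_{\mathcal P(X)}$. Beyond that, no further estimates on the SDE are required at this stage.
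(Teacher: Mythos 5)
Your proposal is correct and follows essentially the same route as the paper: (a) from uniqueness of solutions at the initial time, (b) from the Markov/Chapman--Kolmogorov property, and (c) from the Feller property of Lemma \ref{markovp1} combined with the duality $\bigl(\varphi,p_\ast^\varepsilon\mu\bigr)=\bigl(p^\varepsilon\varphi,\mu\bigr)$ and the fact that $d_{\mathcal P_2}$ metrizes weak convergence. You simply spell out (a) and (b) in more detail than the paper, which dispatches them in one line each.
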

\begin{proof}
	Note that
	(a) follows from the
	definition
	of $S^\varepsilon$,
	and (b) follows
	the Markov property
	of the solutions
	of \eqref{aeu3}.

We now prove  $(c)$.
Suppose  $\mu_n\rightarrow \mu$ in $\mathcal P_2(\mathbb L^2(\calo))$.
We will show
  $
  S^\varepsilon(t,\tau)
\mu_n\rightarrow S^\varepsilon(t,\tau)\mu$ in $\mathcal P_2(\mathbb L^2(\calo))$
for every $\tau\in \R$ and $t\in \R^+$.
Let  $\varphi\in C_b(\mathbb L^2(\calo))$.
By   Lemma \ref{markovp1}
we have
 $p^\varepsilon(\tau +t, \tau)
 \varphi\in C_b(\mathbb L^2(\calo))$
 for all
  $\tau\in \R$ and $t\in \R^+$,
  and hence
\begin{align}\label{pw}
\begin{split}
&\mathop {\lim }\limits_{n \to \infty } \left(\varphi,  {S^\varepsilon   \left( t,\tau \right)\mu _n  } \right)=
\mathop {\lim }\limits_{n \to \infty } \left(\varphi,  {p_ *^\varepsilon   \left( \tau+t,\tau \right)\mu _n   } \right)\\
 &= \mathop {\lim }\limits_{n \to \infty } \left( {   p^\varepsilon(\tau+t,\tau)  \varphi },
 \mu_n   \right)
 = \left( {p^\varepsilon(\tau+t,\tau)  \varphi }, \mu  \right)\\
  &= \left( \varphi,
   {p_ *^\varepsilon \left( \tau+t,\tau \right)\mu   } \right)
   = \left(\varphi,
    {S^\varepsilon \left( t,\tau \right)\mu   } \right),
\end{split}
\end{align}
 as desired.
\end{proof}

Next, we establish   the existence, uniqueness and periodicity  of  $\mathcal D_1$-pullback measure attractors
of  \eqref{aeu3} in  $\mathcal P_2 (\mathbb L^2(\calo))$.

\begin{thm}\label{Tie1}
If  \eqref{cs1}-\eqref{La} and \eqref{acg}  hold, then $S^\varepsilon$
 associated with problem \eqref{aeu3}
problem \eqref{aeu3} has a unique  $\mathcal D_1$-pullback measure attractor $\mathcal A_\varepsilon
=\left \{
\mathcal A_\varepsilon (\tau):
\tau\in \R
\right \} \in {\mathcal D_1 }$
in $\mathcal P_2(\mathbb L^2(\calo))$,
which is       given by, for each $\tau\in \R$,
\begin{align*}
\begin{split}
\mathcal A_\varepsilon\left( \tau \right) &= \omega(K,\tau)
 =\{\psi(0,\tau):\,\psi\,\text{is a}\,\,  \mathcal D_1 \text{-complete orbit of}\,\,S^\varepsilon\}\\
  &
 =\{\xi (\tau):\,\xi\,\text{is a}\,\,  \mathcal D_1 \text{-complete solution of}\,\,S^\varepsilon\},
\end{split}
\end{align*}
where $K=\{K(\tau):\,\tau\in \R\}$ is a  $\mathcal D_1$-pullback absorbing set of $S^\varepsilon$.
 Moreover, if $G_1$ and $G_2$ are $\rho$-periodic in $t$, then $S^\varepsilon$ associated with
\eqref{aeu3} has a unique $\rho$-periodic $\mathcal D_1$-pullback measure attractor
 $\mathcal A_\varepsilon$ in $\mathcal P_2(\mathbb L^2(\mathcal O))$.
\end{thm}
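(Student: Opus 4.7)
The plan is to apply Proposition \ref{Tesa}. Lemma \ref{dy} already supplies that $S^\varepsilon$ is a continuous non-autonomous dynamical system on $\mathcal{P}_2(\mathbb{L}^2(\calo))$, so the two remaining ingredients are a closed $\mathcal{D}_1$-pullback absorbing family $K\in\mathcal{D}_1$ and the $\mathcal{D}_1$-pullback asymptotic compactness of $S^\varepsilon$.

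For the absorbing family I would apply Itô's formula to the weighted functional $\|u^\varepsilon(t)\|_{H_g(\calo)}^2+\tfrac{\alpha}{\beta}\|v^\varepsilon(t)\|_{H_g(\calo)}^2$, the weight $\alpha/\beta$ being chosen so that the antisymmetric cross terms $\alpha v^\varepsilon$ and $-\beta u^\varepsilon$ cancel in the drift. Combining the coercivity \eqref{4.1}, the dissipativity \eqref{c3.1}, the noise bounds \eqref{cs1} and \eqref{cv1}, and the spectral gap \eqref{La1} (which guarantees that the quadratic noise term does not consume all of the damping $\overline\lambda=\min\{\lambda_1,\gamma\}$), I expect an inequality of the form
\begin{equation*}
\frac{d}{dt}\E\|w^\varepsilon(t)\|_{\mathbb{L}^2(\calo)}^2 + \zeta\,\E\|w^\varepsilon(t)\|_{\mathbb{L}^2(\calo)}^2 \le c\bigl(\|G_1(t,\cdot)\|_{L^\infty(\widetilde\calo)}^2+\|G_2(t,\cdot)\|_{L^\infty(\widetilde\calo)}^2+1\bigr),
\end{equation*}
uniformly in $\varepsilon\in(0,\varepsilon_0)$. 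Integrating with Gronwall and invoking \eqref{acg} yields a deterministic $R(\tau)$ with $\E\|w^\varepsilon(t,\tau-s,\xi^\varepsilon)\|_{\mathbb{L}^2(\calo)}^2\le R(\tau)^2$ for $s$ large enough and for all initial data with law in any $D(\tau-s)\in\mathcal{D}_1$. The closed ball $K(\tau):=\overline{B_{\mathcal{P}_2(Z_1)}(R(\tau)+1)}$ then absorbs, and $e^{\zeta\tau}R(\tau)^2$ is bounded by a tail integral of the type controlled by \eqref{acg}, so $K\in\mathcal{D}_1$.

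The hard part is $\mathcal{D}_1$-pullback asymptotic compactness, since the partial dissipativity of the FitzHugh-Nagumo system (no Laplacian in the $v$ equation) blocks any direct $H^1$-gain for $v^\varepsilon$. As flagged in the introduction, I would split $v^\varepsilon=v^\varepsilon_1+v^\varepsilon_2$, where $v^\varepsilon_1$ solves the homogeneous decay equation $dv^\varepsilon_1+\gamma v^\varepsilon_1\,dt=0$ with $v^\varepsilon_1(\tau)=v^\varepsilon(\tau)$, so that $\E\|v^\varepsilon_1(t)\|_{L^2(\calo)}^2$ vanishes exponentially and its contribution to any asymptotic test can be made arbitrarily small. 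The remainder $v^\varepsilon_2$ solves the full $v$-equation with zero initial datum; differentiating it in space and exploiting the $H^{1,\infty}$ hypothesis on $G_2$, the bound \eqref{cs2} on $\nabla\sigma_{2,k}$, and the smallness of $\sum\varpi_{2,k}^2$ from \eqref{La1}, one controls $\nabla v^\varepsilon_2$ by $\nabla u^\varepsilon$. The matching $H^1_\varepsilon$-bound on $u^\varepsilon$ follows from Itô on $a_\varepsilon(u^\varepsilon,u^\varepsilon)$ together with the one-sided bound \eqref{c3.3} and the spatial Lipschitz bound \eqref{c3.4} on $f$. This produces a uniform estimate $\E\|u^\varepsilon(t)\|_{H^1_\varepsilon(\calo)}^2+\E\|v^\varepsilon_2(t)\|_{H^1(\calo)}^2\le C(\tau)$, and then Markov's inequality plus the compact embedding $H^1(\calo)\hookrightarrow L^2(\calo)$ gives tightness of $\mathrm{Law}(w^\varepsilon(t,\tau-s,\xi^\varepsilon))$ in $\mathbb{L}^2(\calo)$; the uniform second-moment bound from the absorbing step upgrades this tightness to pullback asymptotic compactness in $\mathcal{P}_2(\mathbb{L}^2(\calo))$.

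With these two ingredients Proposition \ref{Tesa} delivers existence, uniqueness and the claimed characterizations of $\mathcal{A}_\varepsilon(\tau)$. For the periodic statement, if $G_1,G_2$ are $\rho$-periodic then $R(\tau)$ is $\rho$-periodic, hence so is $K$; combining with Lemma \ref{markovp2}, which via \eqref{sp} yields $S^\varepsilon(t,\tau)=S^\varepsilon(t,\tau+\rho)$, the periodic clause of Proposition \ref{Tesa} applies and produces the $\rho$-periodic attractor. The genuinely new technical work throughout lies in the $v$-splitting and the associated uniform $H^1$-estimates; everything else is a routine deployment of Proposition \ref{Tesa}.
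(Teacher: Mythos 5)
Your overall strategy coincides with the paper's: verify the hypotheses of Proposition \ref{Tesa} via a weighted $\mathbb L^2$ energy estimate for the absorbing family (the paper's Lemmas \ref{Tsb2} and \ref{Lms}, with weights $\beta$ and $\alpha$ cancelling the cross terms exactly as you describe), uniform $H^1_\varepsilon$ bounds on $u^\varepsilon$ via It\^o's formula applied to $a_\varepsilon(u^\varepsilon,u^\varepsilon)$ together with Lemma \ref{Tsb3}, a splitting of $v^\varepsilon$ to handle the partial dissipativity, and tightness from Chebyshev's inequality plus the compact embedding $\mathbb H^1(\calo)\hookrightarrow\mathbb L^2(\calo)$ (Lemmas \ref{Tsb4}, \ref{aTsb4}, \ref{Lch}, \ref{Ltc}); the periodicity argument is also the paper's.

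There is, however, a genuine inconsistency in your splitting of $v^\varepsilon$, which you yourself identify as the technical heart of the proof. You ask $v_1^\varepsilon$ to solve the noiseless decay equation $dv_1^\varepsilon+\gamma v_1^\varepsilon\,dt=0$ with $v_1^\varepsilon(\tau)=v^\varepsilon(\tau)$, and simultaneously ask $v_2^\varepsilon$ to solve ``the full $v$-equation with zero initial datum.'' These two requirements are incompatible with $v^\varepsilon=v_1^\varepsilon+v_2^\varepsilon$, because the multiplicative noise $\sum_k\varpi_{2,k}v^\varepsilon\,dW_k$ is linear in $v^\varepsilon$ and has to be distributed between the two pieces. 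If $v_2^\varepsilon$ carries only the noise $\varpi_{2,k}v_2^\varepsilon$, the sum solves an equation with noise coefficient $\varpi_{2,k}(v^\varepsilon-v_1^\varepsilon)\neq\varpi_{2,k}v^\varepsilon$; if instead $v_2^\varepsilon$ carries $\varpi_{2,k}(v_1^\varepsilon+v_2^\varepsilon)$, then the term $a_\varepsilon(\varpi_{2,k}v_1^\varepsilon,\varpi_{2,k}v_1^\varepsilon)$ appears in the It\^o expansion of $a_\varepsilon(v_2^\varepsilon,v_2^\varepsilon)$ and is not finite, since $v_1^\varepsilon$ inherits only the $L^2$ regularity of the initial datum $\xi_2^\varepsilon$. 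The repair, which is what the paper does in \eqref{v1}--\eqref{v22}, is to keep the multiplicative noise on the homogeneous piece: $dv_1^\varepsilon+\gamma v_1^\varepsilon\,dt=\sum_k\varpi_{2,k}v_1^\varepsilon\,dW_k$ with $v_1^\varepsilon(\tau)=\xi_2^\varepsilon$, so that $v_2^\varepsilon$ solves \eqref{v21} with noise $\varpi_{2,k}v_2^\varepsilon$ and zero datum, and linearity gives $v^\varepsilon=v_1^\varepsilon+v_2^\varepsilon$. The decay of $\E\|v_1^\varepsilon(t)\|^2$ still holds, now at rate $2\gamma-\sum_k\varpi_{2,k}^2>0$, which is guaranteed by \eqref{La} (see \eqref{v1e}). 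With this correction your argument goes through and is essentially the paper's proof.
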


The next result is concerned
with the existence,
uniqueness and periodicity of
$\mathcal D_0$-pullback
 measure attractors for
$S^0$
associated with problem
(\ref{aeu4}), which is
 analogous to Theorem \ref{Tie1}.

\begin{thm}\label{Tie2}
 If  \eqref{cs1}-\eqref{La} and \eqref{acg} hold, then $S^0$ associated with
 problem \eqref{aeu4} has a unique $\mathcal D_0$-pullback
 measure attractor
 $\mathcal A_0
 =\left \{
 \mathcal A_0 (\tau):
 \tau\in \R
 \right \} \in {\mathcal D_0 }$
 in $\mathcal P_2(\mathbb L^2(\mathcal {Q}))$,
 which is       given by, for each $\tau\in \R$,
 \begin{align*}
 	\begin{split}
 		\mathcal A_0\left( \tau \right) &= \omega(K_0,\tau)
 	 =\{\psi(0,\tau):\,\psi\,\text{is a}\,\,  \mathcal D_0 \text{-complete orbit of}\,\,S^0\}\\
 		&
 		=\{\xi (\tau):\,\xi\,\text{is a}\,\,  \mathcal D_0 \text{-complete solution of}\,\,S^0\},
 	\end{split}
 \end{align*}
 where $K_0=\{K_0(\tau):\,\tau\in \R\}$ is  a  $\mathcal D_0$-pullback absorbing set of $S^0$.
 Moreover, if $G_1$ and $G_2$ are $\rho$-periodic in $t$, then $S^0$ associated with
\eqref{aeu4} has a unique $\rho$-periodic $\mathcal D_0$-pullback measure attractor
 $\mathcal A_0$ in $\mathcal P_2(\mathbb L^2(\mathcal Q))$.
\end{thm}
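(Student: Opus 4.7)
The plan is to verify the hypotheses of Proposition \ref{Tesa} for the non-autonomous dynamical system $S^0$ on $\mathcal P_2(\mathbb L^2(\mathcal Q))$, namely (i) continuity, (ii) existence of a closed $\mathcal D_0$-pullback absorbing set $K_0\in \mathcal D_0$, and (iii) $\mathcal D_0$-pullback asymptotic compactness. Existence, uniqueness and periodicity of the attractor will then follow from the abstract theorem, together with the characterization in terms of complete orbits and complete solutions. The argument parallels the proof of Theorem \ref{Tie1}, with $\mathcal O$ replaced by $\mathcal Q$, $A_\varepsilon$ by $A_0$, and $\xi^\varepsilon$ by $\xi^0$, so the main task is to check that every step carries over to the limit system \eqref{aeu4}.

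First I would verify that $S^0$ is a continuous non-autonomous dynamical system on $\mathcal P_2(\mathbb L^2(\mathcal Q))$. The semigroup property follows from the Markov property of $w^0(t,\tau,\xi^0)$, which holds by the same well-posedness argument as for \eqref{aeu3}. Feller continuity of $p^0(t,r)$ on $C_b(\mathbb L^2(\mathcal Q))$, which is the analogue of Lemma \ref{markovp1}(i), yields continuity of $S^0(t,\tau)$ on $\mathcal P_2(\mathbb L^2(\mathcal Q))$ via the same duality computation as in \eqref{pw}. Next, applying Ito's formula to $\|u^0(t)\|^2_{H_g(\mathcal Q)}+\frac{\alpha}{\beta}\|v^0(t)\|^2_{H_g(\mathcal Q)}$ and invoking \eqref{c3.1}, \eqref{La1} together with the Gronwall inequality and the tempered growth condition \eqref{acg}, I obtain a uniform estimate of the form
\begin{equation*}
\mathbb E\|w^0(t,\tau,\xi^0)\|_{\mathbb L^2(\mathcal Q)}^2 \le C e^{-\zeta(t-\tau)}\mathbb E\|\xi^0\|_{\mathbb L^2(\mathcal Q)}^2 + R_0(t),
\end{equation*}
where $R_0(t)$ grows at most exponentially with rate strictly smaller than $\zeta$. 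The closed ball $K_0(\tau)=\overline{B_{\mathcal P_2(\mathbb L^2(\mathcal Q))}(R_0(\tau)^{1/2})}$ is then a closed $\mathcal D_0$-pullback absorbing set belonging to $\mathcal D_0$.

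The main obstacle is $\mathcal D_0$-pullback asymptotic compactness, since the $v^0$-equation is only an ordinary differential equation in time with no smoothing in the spatial variable: this is the partial dissipativity flagged in the introduction. To overcome it I would adapt the splitting $v^0=v^0_1+v^0_2$ mentioned in the introduction, where $v^0_2$ solves a linear problem driven by the noise and converges to zero in mean square as $t\to\infty$, while $v^0_1$ satisfies a more regular equation (inheriting regularity through $u^0$ via $\beta u^0$ and through $G_2\in H^{1,\infty}$) and admits a uniform $H^1(\mathcal Q)$ bound. Combined with the standard parabolic estimate for $u^0$ in $H^1(\mathcal Q)$ (which follows from the $A_0$-dissipation), this yields uniform bounds in $\mathbb H^1(\mathcal Q)$ modulo a mean-square vanishing remainder. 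By the compact embedding $H^1(\mathcal Q)\hookrightarrow L^2(\mathcal Q)$ and Prokhorov's theorem, the family of laws $\{p_*^0(\tau,\tau-t_n)\mu_n\}$ with $\{\mu_n\}\subset D(\tau-t_n)$, $D\in\mathcal D_0$ and $t_n\to\infty$, is tight in $\mathcal P(\mathbb L^2(\mathcal Q))$, and uniform integrability of $\|\cdot\|^2$ against these laws (a consequence of the uniform second-moment bounds) upgrades this to convergence in $(\mathcal P_2(\mathbb L^2(\mathcal Q)),d_{\mathcal P_2})$, giving the required asymptotic compactness.

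Finally, Proposition \ref{Tesa} delivers the unique $\mathcal D_0$-pullback measure attractor $\mathcal A_0$ together with the orbit/complete-solution characterization. For periodicity, the analogue of Lemma \ref{markovp2} for $p^0(t,r)$ is obtained by the same argument as in \cite{LWW21}, using uniqueness of solutions of \eqref{aeu4} under the $\rho$-periodicity of $G_1,G_2$; and the absorbing radius $R_0(\tau)$ depends on $\tau$ only through the $\rho$-periodic data, so $K_0$ can be chosen $\rho$-periodic. The invariance of $\mathcal A_0$ then forces $\mathcal A_0(\tau+\rho)=\mathcal A_0(\tau)$ for every $\tau\in\R$.
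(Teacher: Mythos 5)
Your proposal is correct and follows essentially the same route as the paper: the paper gives no separate proof of Theorem \ref{Tie2}, stating it is analogous to Theorem \ref{Tie1}, whose proof runs exactly as you describe — continuity via the Feller property (Lemmas \ref{markovp1} and \ref{dy}), a closed absorbing set from the weighted energy estimate (Lemmas \ref{Tsb2} and \ref{Lms}), asymptotic compactness via the splitting of the $v$-component into a mean-square-vanishing part carrying the initial data and a regular part bounded in $H^1$ (\eqref{v1}--\eqref{v1e}, Lemmas \ref{aTsb4}, \ref{Lch}, \ref{Ltc}), and finally Proposition \ref{Tesa} plus Lemma \ref{markovp2} for periodicity. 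The only cosmetic difference is that you swap the labels of the two pieces of the splitting relative to the paper's $v_1^\varepsilon$, $v_2^\varepsilon$.
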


Finally,   we state the limiting behavior of $\mathcal D_1$-pullback measure attractors of \eqref{aeu3}
as $\eps \to 0$.

\begin{thm}\label{upsemi}
Assume that \eqref{cs1}-\eqref{La}, \eqref{acg} and \eqref{cg}-\eqref{cs}    hold.
Then the $\mathcal D_1$-pullback measure  attractors $\mathcal{A}_\varepsilon$ are
 upper semicontinuous at $\varepsilon=0$;
 more precisely,  for every $\tau\in \R$,
\[
\mathop {\lim }\limits_{\varepsilon  \to 0} \text{d}_{\mathcal P_2(\mathbb L^2 \left( {\mathcal O}  \right))}
 \left( {\mathcal{A}_\varepsilon(\tau) , \ \
 \mathcal{A}_0(\tau)
 \circ {\mathcal I}^{-1}
 }\right) = 0,
\]
where  $\mathcal I:
\mathbb L^2
(\mathcal {Q})
\to
\mathbb L^2(\mathcal {O})$ is the
 operator
  given by:
 for every
 $\varphi=(\varphi_1,\varphi_2)
 \in \mathbb L^2(\mathcal {Q}) $,
 $$
 \mathcal I \varphi  (y)
 = \varphi (y^*),
 \quad
 \forall \  y=(y^*, y_{n+1})
 \in \mathcal{O}.
 $$

\end{thm}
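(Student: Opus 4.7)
The plan is to use the complete-trajectory characterization of the pullback measure attractor in Proposition~\ref{Tesa} (invoked in Theorems~\ref{Tie1} and~\ref{Tie2}) and reduce the upper semicontinuity to a convergence-of-trajectories statement. Given $\varepsilon_n \downarrow 0$ and $\mu_n \in \mathcal{A}_{\varepsilon_n}(\tau)$, I would write each $\mu_n = \xi_{\varepsilon_n}(\tau)$ for a complete $\mathcal{D}_1$-solution $\xi_{\varepsilon_n}:\mathbb{R}\to\mathcal{P}_2(\mathbb{L}^2(\mathcal{O}))$ of $S^{\varepsilon_n}$, extract a subsequential weak limit $\xi_\star$, identify $\xi_\star(s)=\xi_0(s)\circ \mathcal{I}^{-1}$ for a complete $\mathcal{D}_0$-solution $\xi_0$ of $S^0$, and conclude $\mu_n \to \xi_0(\tau)\circ \mathcal{I}^{-1}\in \mathcal{A}_0(\tau)\circ \mathcal{I}^{-1}$ along the subsequence. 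Since every subsequence has a further subsequence of this form, the Hausdorff semi-metric tends to zero.

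\textbf{Tightness and support identification.} First I would use the uniform-in-$\varepsilon$ second-moment bounds together with the higher-regularity estimates in $\mathbb{H}^1_\varepsilon(\mathcal{O})$ derived in Section~5 (with the decomposition of $\hat v^\varepsilon$ into a regular part and a mean-square-vanishing remainder alluded to in the Introduction) to show that for each fixed $s\in \mathbb{R}$ the family $\{\xi_{\varepsilon_n}(s)\}_n$ is bounded and tight in $\mathcal{P}_2(\mathbb{L}^2(\mathcal{O}))$. A diagonal extraction over a countable dense set of times yields a subsequence (still denoted $\varepsilon_n$) and weak limits $\xi_\star(s)$, and uniform temperedness of moments transfers the $\mathcal{D}_0$-type decay as $s\to -\infty$ to the limit. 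Moreover, since the norm of $\mathbb{H}^1_\varepsilon(\mathcal{O})$ controls $\varepsilon^{-1}\|u_{y_{n+1}}\|_{L^2(\mathcal{O})}$, the vertical derivatives of any weak limit vanish, so $\operatorname{supp}\xi_\star(s)\subset \mathcal{I}(\mathbb{L}^2(\mathcal{Q}))$ and $\xi_\star(s)=\xi_0(s)\circ \mathcal{I}^{-1}$ for a unique $\xi_0(s)\in \mathcal{P}_2(\mathbb{L}^2(\mathcal{Q}))$.

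\textbf{Passage to the limit.} Next I would pass to the limit in the cocycle identity $\xi_{\varepsilon_n}(s+t) = S^{\varepsilon_n}(t,s)\xi_{\varepsilon_n}(s)$ to obtain $\xi_0(s+t) = S^0(t,s)\xi_0(s)$, exhibiting $\xi_0$ as a complete $\mathcal{D}_0$-solution of $S^0$. The crucial ingredient is a continuity property of the Markov kernels across $\varepsilon=0$: whenever $\mu_n \to \mu_0 \circ \mathcal{I}^{-1}$ in $\mathcal{P}_2(\mathbb{L}^2(\mathcal{O}))$ one has $S^{\varepsilon_n}(t,s)\mu_n \to (S^0(t,s)\mu_0)\circ \mathcal{I}^{-1}$. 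This reduces to a pathwise-in-distribution statement $w^{\varepsilon_n}(t,s,\xi_{\varepsilon_n}) \to \mathcal{I}w^0(t,s,\xi_0)$, which one establishes by combining the uniform energy estimates with an Aubin--Lions compactness argument in the $y^*$-directions only (the $y_{n+1}$-direction degenerates as desired) and by controlling the stochastic integrals via Burkholder--Davis--Gundy under the summability hypotheses on $\sigma_{i,k}$, $\varpi_{1,k}$ and $\varpi_{2,k}$. Theorem~\ref{Tie2} then gives $\xi_0(\tau)\in \mathcal{A}_0(\tau)$, completing the argument.

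\textbf{Main obstacle.} The technically most demanding step will be the pathwise-in-distribution convergence $w^{\varepsilon_n}\to \mathcal{I}w^0$ and the identification of the limiting equation. The factor $\varepsilon^{-2}$ in the vertical entry of $P_\varepsilon$ makes the diffusion degenerate in $y_{n+1}$, so compactness in $L^2(\mathcal{O})$ has to be extracted solely from the $y^*$-directions, while the partial dissipativity of the FitzHugh--Nagumo coupling forces the Marion--Wang-type splitting of $v^\varepsilon$ to be carried out uniformly in $\varepsilon$, with the cross terms $\alpha v^\varepsilon$ and $-\beta u^\varepsilon$ handled carefully so the resulting energy estimates remain uniform. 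Once this convergence is in place, the rest of the argument is structural and follows the Proposition~\ref{Tesa} template.
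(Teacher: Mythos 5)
Your proposal is correct in outline but follows a genuinely different route from the paper's. The two ingredients you identify, uniform $\mathbb H^1_\varepsilon(\mathcal O)$ bounds on the attractors and convergence of the solution operators as $\varepsilon \to 0$, are exactly what the paper prepares in Lemma \ref{Lue} and Theorem \ref{Tee}; the differences lie in how each ingredient is established and assembled. For the solution convergence, the paper does not use compactness at all: Theorem \ref{Tee} is proved by a direct It\^o/Gronwall estimate on the difference $w^\varepsilon(t,\tau,\xi^\varepsilon)-w^0(t,\tau,\mathcal M\xi^\varepsilon)$, where $\mathcal M$ is the vertical average operator, yielding an explicit $O(\varepsilon)$ rate uniformly over initial laws in $B^1_\varepsilon(R)$; the hypotheses \eqref{cg}--\eqref{cs} are tailored precisely for this computation, and Lemma \ref{aaL5.3} ($\|w-\mathcal M w\|\le c\varepsilon\|w\|_{\mathbb H^1_\varepsilon}$) is what links the averaged initial data to your ``limit measures supported on $\mathcal I(\mathbb L^2(\mathcal Q))$'' picture. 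Your Aubin--Lions/Burkholder--Davis--Gundy scheme would also work, but in the stochastic setting it requires Skorokhod-type representation and uniqueness in law to identify the limit equation, which is substantially more machinery than the quantitative estimate the structural hypotheses already permit. For the concluding step, the paper (following Lemma 6.3 of \cite{LW24}) uses invariance to write $\mathcal A_\varepsilon(\tau)=S^\varepsilon(t,\tau-t)\mathcal A_\varepsilon(\tau-t)$, places $\mathcal A_\varepsilon(\tau-t)$ in $B^1_\varepsilon(R)$ via Lemma \ref{Lue}, applies Theorem \ref{Tee} for one fixed large $t$, and finishes with the pullback attraction of $\mathcal A_0$; you instead reconstruct a full $\mathcal D_0$-complete solution of $S^0$ as a limit of complete solutions of $S^{\varepsilon_n}$ and invoke the characterization in Proposition \ref{Tesa}. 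Your version is viable (the $\mathcal D_0$-decay survives the limit because the absorbing set $K$ of Lemma \ref{Lms} is independent of $\varepsilon$ and second moments are lower semicontinuous under weak convergence, and convergence in $d_{\mathcal P_2}$ only requires weak convergence plus finiteness of second moments), but it costs a diagonal extraction and a passage to the limit in the cocycle identity that the attraction-based argument avoids entirely.
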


\section{Uniform Estimates}
\setcounter{equation}{0}

This section is devoted to
the  uniform estimates of the solutions of problem
\eqref{aeu3} and \eqref{aeu4} which are needed to show the
existence and limiting behavior of pullback  measure attractors.
In the sequel, we use
$\mathcal L(\xi)$ to denote the distribution law of a random variable $\xi$.

\begin{lem}\label{Tsb2}
Suppose \eqref{cs1}-\eqref{La} and \eqref{acg}  hold.
Then for every $\tau\in \R$ and $D_1=\{D_1(t):t\in \R\}\in \mathcal D_1$, there exists   $T=T(\tau,D_1)>0$,
independent of $\varepsilon$, such that for  all
$t\geq T$ and $0<\varepsilon<\varepsilon_0$, the solution $(u^\varepsilon,v^\varepsilon)$ of \eqref{aeu3} satisfies
\begin{align*}
\begin{split}
&\E\left( {\left\| {(u^\varepsilon\left( \tau,\tau-t,\xi^\varepsilon \right),
v^\varepsilon\left( \tau,\tau-t,\xi^\varepsilon \right))} \right\|^2_{\mathbb L^2 \left( \calo \right)} } \right)\\
&\le M_1+M_1\int_{-\infty}^\tau e^{ - \zeta \left( {\tau - s} \right)}
  \left (
  \left\|  {G_1(s,\cdot)} \right\|_{L^\infty( { {\widetilde {\mathcal O}} } )}^2
  +
  \left\|  {G_2(s,\cdot)} \right\|^2_{L^\infty( { {\widetilde {\mathcal O}} } )}
  \right ) ds
 \end{split}
 \end{align*}
 and
 \begin{align}\label{ee}
\begin{split}
& \int_{\tau-t}^\tau {e^{ - \zeta \left( {\tau - s} \right)}
  \E\left( {\left\| {u^\varepsilon\left( s,\tau- t,\xi^\varepsilon \right)} \right\|_{H_\varepsilon^1
  \left( \calo \right)}^2+\left\| {v^\varepsilon\left( s,\tau- t,\xi^\varepsilon \right)} \right\|_{L^2
  \left( \calo \right)}^2} \right)} ds\\
  &
  \le M_1+M_1\int_{-\infty}^\tau e^{ - \zeta \left( {\tau - s} \right)}
  \left (
  \left\|  {G_1(s,\cdot)} \right\|_{L^\infty( { {\widetilde {\mathcal O}} } )}^2
  +
  \left\|  {G_2(s,\cdot)} \right\|^2_{L^\infty( { {\widetilde {\mathcal O}} } )}
  \right ) ds ,
 \end{split}
 \end{align}
 where
  $\xi^\varepsilon  \in
 L^2_{\mathcal{F}_{\tau-t}}(\Omega, \mathbb  L^2(\mathcal {O}))$
 with
 $\mathcal L(\xi^\varepsilon)\in D_1(\tau-t)$, and
  $M_1>0$ is a   constant  independent of $\tau$, $\varepsilon$  and $D_1$.
\end{lem}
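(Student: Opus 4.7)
The plan is to apply It\^o's formula to the weighted energy functional
$$W(s) = \beta\|u^\varepsilon(s)\|_{H_g(\mathcal O)}^2 + \alpha\|v^\varepsilon(s)\|_{H_g(\mathcal O)}^2,$$
where the weights $\beta$ and $\alpha$ are chosen precisely so that the linear coupling between the two equations cancels: the term $-2\beta\alpha(u^\varepsilon,v^\varepsilon)_{H_g}$ produced by the drift $\alpha v^\varepsilon$ in the $u$-equation is exactly offset by $+2\alpha\beta(u^\varepsilon,v^\varepsilon)_{H_g}$ coming from the drift $-\beta u^\varepsilon$ in the $v$-equation. Working with $(\cdot,\cdot)_{H_g(\mathcal O)}$ instead of $(\cdot,\cdot)_{L^2(\mathcal O)}$ is important: its norm is equivalent to $\|\cdot\|_{L^2(\mathcal O)}$ uniformly in $\varepsilon$, and pairing with the operator $A_\varepsilon$ produces exactly $a_\varepsilon(u^\varepsilon,u^\varepsilon)$ via \eqref{aae4.1}.

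Next, I would assemble the drift part of the It\^o identity using the standing assumptions. The $A_\varepsilon$-contribution $-2\beta\,a_\varepsilon(u^\varepsilon,u^\varepsilon)$ is kept on the left, the linear damping contributes $-2\beta\lambda\|u^\varepsilon\|^2_{H_g}-2\alpha\gamma\|v^\varepsilon\|^2_{H_g}$, and the nonlinearity is controlled by \eqref{c3.1}, giving $2\beta(f_\varepsilon(y,u^\varepsilon),u^\varepsilon)_{H_g}\le -2\beta\alpha_1\int_{\mathcal O} g|u^\varepsilon|^p\,dy+2\beta\int_{\mathcal O} g\psi_1\,dy$. The It\^o correction from the diffusion gives, after $(a+b)^2\le 2a^2+2b^2$ and $|\varpi_{1,k}(s)|\le \beta_k+\gamma_k|s|$, an upper bound of
$$C + 4\beta\sum_{k=1}^\infty \gamma_k^2\|u^\varepsilon\|_{H_g}^2 + 4\alpha\sum_{k=1}^\infty \varpi_{2,k}^2\|v^\varepsilon\|_{H_g}^2,$$
where $C$ depends on $\sum\|\sigma_{i,k}\|^2_{L^\infty(\widetilde{\mathcal O})}$ and $\sum\beta_k^2$. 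The forcing terms $G_{i,\varepsilon}$ are absorbed by Young's inequality, noting $\|G_{i,\varepsilon}\|_{L^2(\mathcal O)}^2\le \overline{g}\,\|G_i\|_{L^\infty(\widetilde{\mathcal O})}^2$. Taking expectations kills the martingale and invoking \eqref{La1} (with $\overline\lambda=\min\{\lambda_1,\gamma\}$) provides a strictly positive residual dissipation, yielding
\begin{equation*}
\frac{d}{ds}\E W(s) + \zeta\,\E W(s) + c\,\E a_\varepsilon(u^\varepsilon,u^\varepsilon) + c\,\E\|u^\varepsilon\|^2_{H_g} + c\,\E\|v^\varepsilon\|^2_{H_g} \le C + c\,\|G_1(s,\cdot)\|^2_{L^\infty(\widetilde{\mathcal O})} + c\,\|G_2(s,\cdot)\|^2_{L^\infty(\widetilde{\mathcal O})},
\end{equation*}
with constants independent of $\varepsilon\in(0,\varepsilon_0)$.

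The final step is a pullback Gronwall argument: multiply by $e^{\zeta s}$, integrate over $[\tau-t,\tau]$, and divide by $e^{\zeta\tau}$. Since $\mathcal L(\xi^\varepsilon)\in D_1(\tau-t)$ we have $\E\|\xi^\varepsilon\|^2_{\mathbb L^2(\mathcal O)}\le \|D_1(\tau-t)\|^2_{\mathcal P_2(Z_1)}$, so the initial-data term $e^{-\zeta t}\E W(\tau-t)$ is dominated by $C e^{\zeta(\tau-t)}\|D_1(\tau-t)\|^2_{\mathcal P_2}$, which vanishes as $t\to\infty$ by the definition of $\mathcal D_1$. Hence for $t\ge T(\tau,D_1)$ this contribution is at most a universal constant, producing the first inequality. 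The bound on $\int_{\tau-t}^\tau e^{-\zeta(\tau-s)}\E\|u^\varepsilon\|^2_{H^1_\varepsilon(\mathcal O)}\,ds$ follows by keeping the non-negative $a_\varepsilon(u^\varepsilon,u^\varepsilon)+\|u^\varepsilon\|^2$ term on the left of the integrated inequality and invoking \eqref{4.1} to pass from $a_\varepsilon(u,u)+\|u\|^2_{L^2}$ to $\eta_1\|u\|^2_{H^1_\varepsilon}$ with an $\varepsilon$-uniform constant.

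The main obstacle is the partial dissipativity: the equation for $v^\varepsilon$ has no diffusion, so one cannot hope for $H^1$-type bounds on $v^\varepsilon$ and one must avoid any step that would require controlling a cross term $(u^\varepsilon,v^\varepsilon)$ by a gradient norm of $v^\varepsilon$. The weighted combination $\beta\|u\|^2+\alpha\|v\|^2$ circumvents this by eliminating the coupling exactly. A secondary subtlety is uniformity in $\varepsilon$: the geometric factor $1/(\varepsilon g)$ in $P_\varepsilon$ would be disastrous if one worked in the plain $H^1(\mathcal O)$ inner product, but the pair $(H_g(\mathcal O),\,a_\varepsilon)$ together with the two-sided bound \eqref{4.1} insulates all estimates from $\varepsilon$, so every constant appearing above can be taken independent of $\varepsilon\in(0,\varepsilon_0)$, as required for the later semi-continuity argument.
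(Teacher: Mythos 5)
Your proposal is correct and follows essentially the same route as the paper: It\^o's formula applied to the weighted functional $\beta\|u^\varepsilon\|^2_{H_g(\calo)}+\alpha\|v^\varepsilon\|^2_{H_g(\calo)}$ (which cancels the linear coupling), the dissipativity condition \eqref{c3.1}, the growth bounds \eqref{cv1} on the noise combined with \eqref{La1} to retain a positive decay rate $\zeta$, the integrating factor $e^{\zeta s}$ with the $\mathcal D_1$ attraction condition to absorb the initial data, and \eqref{4.1} to convert $a_\varepsilon(u,u)+\|u\|^2_{L^2}$ into the $H^1_\varepsilon(\calo)$ norm uniformly in $\varepsilon$. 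No gaps.
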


\begin{proof}
 By  \eqref{aeu3} and Ito's formula, we get for $t\geq \tau$,
 \begin{align*}
\begin{split}
&
\E\left(
 \beta\left\| {u^\varepsilon\left( t \right)} \right\|_{H_g \left( \calo \right)}^2+
 \alpha\left\| {v^\varepsilon\left( t \right)} \right\|_{H_g \left( \calo \right)}^2
 \right )
   + 2\beta  \E\left(
   \int_\tau^t {a_\varepsilon
  \left( {u^\varepsilon\left( s \right),u^\varepsilon\left( s \right)}
  \right)} ds
  \right )\\
 &\quad + 2\beta\lambda  \E\left({\int_\tau^t
 \left\|
   {u^\varepsilon\left( s \right)} \right\|
   _{H_g \left( \calo \right)}^2 ds}
   \right ) + 2\alpha\gamma \E\left({\int_\tau^t
 \left\|
   {v^\varepsilon\left( s \right)} \right\|
   _{H_g \left( \calo \right)}^2 ds}
   \right ) \\
 & =
 \E\left(
 \beta\left\| \xi^\varepsilon_1  \right\|_{H_g \left( \calo  \right)}^2
 +\alpha\left\| \xi^\varepsilon_2  \right\|_{H_g \left( \calo  \right)}^2
 \right )
   + 2\beta
   \E\left(
   \int_\tau^t {\left( {f_\varepsilon
   \left( {\cdot,u^\varepsilon\left( s \right)} \right),
   u^\varepsilon\left( s \right)} \right)} _{H_g \left( \calo  \right)} ds
   \right )\\
  &\quad + 2\beta  \E\left(
   \int_\tau^t {\left( {G_{1,\varepsilon}(s,\cdot)   ,
   u^\varepsilon\left( s \right)} \right)} _{H_g \left( \calo \right)} ds
   \right )+ 2\alpha  \E\left(
   \int_\tau^t {\left( {G_{2,\varepsilon}(s,\cdot)   ,
   v^\varepsilon\left( s \right)} \right)} _{H_g \left( \calo \right)} ds
   \right )\\
  &\quad+ \beta\sum\limits_{k = 1}^\infty
   \int_\tau^t
    \E\left(
     {\left\| {\sigma _{1,k,\varepsilon }
   + \varpi _{1,k} \left( {u^\varepsilon\left( s \right)} \right)}
    \right\|_{H_g \left( \calo  \right)}^2 }
    \right ) ds+ \alpha\sum\limits_{k = 1}^\infty
   \int_\tau^t
    \E\left(
     {\left\| {\sigma _{2,k,\varepsilon }
   + \varpi _{2,k} {v^\varepsilon\left( s \right)}}
    \right\|_{H_g \left( \calo  \right)}^2 }
    \right ) ds.
  \end{split}
 \end{align*}
 Consequently, we have for $t> \tau$,
 \begin{align}\label{sb1}
\begin{split}
&
{\frac d{dt}}
\E\left(
 \beta\left\| {u^\varepsilon\left( t \right)} \right\|_{H_g \left( \calo \right)}^2
 +\alpha \left\| {v^\varepsilon\left( t \right)} \right\|_{H_g \left( \calo \right)}^2
 \right )
   + 2\beta  \E\left(  {a_\varepsilon
  \left( {u^\varepsilon\left( t \right),u^\varepsilon\left( t \right)}
  \right)}
  \right )\\
 &\quad + 2\beta\lambda
  \E\left(
  {\left\|
   {u^\varepsilon\left( t \right)} \right\|
   _{H_g \left( \calo \right)}^2 }
   \right ) + 2\alpha\gamma
  \E\left(
  {\left\|
   {v^\varepsilon\left( t \right)} \right\|
   _{H_g \left( \calo \right)}^2 }
   \right )\\
 & =   2\beta
   \E\left(
    {\left( {f_\varepsilon
   \left( {\cdot ,u^\varepsilon\left(t \right)} \right),
   u^\varepsilon\left( t \right)} \right)} _{H_g \left( \calo  \right)}
   \right )\\
  &\quad + 2\beta
   \E\left(  {\left( {G_{1,\varepsilon} (t,\cdot)  ,
   u^\varepsilon\left( t \right)} \right)} _{H_g \left( \calo \right)}
   \right )+ 2\alpha
   \E\left(  {\left( {G_{2,\varepsilon} (t,\cdot)  ,
   u^\varepsilon\left( t \right)} \right)} _{H_g \left( \calo \right)}
   \right )\\
  &\quad+\beta \sum\limits_{k = 1}^\infty
    \E\left(
     {\left\| {\sigma _{1,k,\varepsilon }
   + \varpi _{1,k} \left( {u^\varepsilon\left( t\right)} \right)}
    \right\|_{H_g \left( \calo  \right)}^2 }
    \right ) + \alpha\sum\limits_{k = 1}^\infty
    \E\left(
     {\left\| {\sigma _{2,k,\varepsilon }
   + \varpi _{2,k} {v^\varepsilon\left( t\right)}}
    \right\|_{H_g \left( \calo  \right)}^2 }
    \right )  .
  \end{split}
 \end{align}

For the first   term on the right-hand side
of \eqref{sb1}, by  \eqref{a1} and \eqref{c3.1}  we obtain
\begin{align}\label{sb2}
\begin{split}
 &2\beta
  {\left( {f_\varepsilon  \left( {\cdot,u^\varepsilon\left( t
   \right)} \right),u^\varepsilon\left( s \right)}
  \right)_{H_g \left( \calo \right)} }
    = 2\beta  {\int_{\calo} {gf_\varepsilon
    \left( {y,u^\varepsilon\left( t \right)} \right)}
     u^\varepsilon\left( t \right)dy }  \\
 & \le  -2\beta \alpha _1 \underline g
  {\int_{\calo} {\left| {u^\varepsilon\left( t \right)} \right|^p dy}  }
  +2\beta \overline g |\calo| \left\| {\psi _1 }
  \right\|_{L^\infty  \left( {\widetilde {\calo}} \right)} ,
  \end{split}
 \end{align}
where $|\calo|$ is the Lebesgue measure of $\calo$.
 For the second and third  term on the right-hand side of \eqref{sb1}, we have
\begin{align}\label{sb3}
\begin{split}
 &2\beta
  {\left( {G_{1,\varepsilon}(t,\cdot)    ,u^\varepsilon\left( t
   \right)} \right)} _{H_g \left( \calo \right)}+2\alpha
  {\left( {G_{2,\varepsilon } (t,\cdot)  ,v^\varepsilon\left( t
   \right)} \right)} _{H_g \left( \calo \right)}\\
 & \le {\lambda}
  ({\beta\left\| {u^\varepsilon\left(t \right)}
  \right\|_{H_g \left( \calo \right)}^2+\alpha\left\| {v^\varepsilon\left(t \right)}
  \right\|_{H_g \left( \calo \right)}^2 })
  + \frac{1}{{\lambda }}\overline g |\calo|(\beta \left\|
   {G_1(t,\cdot) } \right\|_{L^\infty  \left( {\widetilde \calo} \right)}^2+\alpha\left\|
   {G_2(t,\cdot) } \right\|_{L^\infty  \left( {\widetilde \calo} \right)}^2 ) .
  \end{split}
 \end{align}
For the last two terms on the right-hand
side of \eqref{sb1}, we have from \eqref{cv1}
\begin{align}\label{sb4}
\begin{split}
 &\beta\sum\limits_{k = 1}^\infty
   {  {\left\| {\sigma _{1,k,\varepsilon }
 + \varpi _{1,k} \left( {u^\varepsilon\left( t \right)} \right)}
  \right\|_{H_g \left( \calo \right)}^2 } } +\alpha\sum\limits_{k = 1}^\infty
   {  {\left\| {\sigma _{2,k,\varepsilon }
 + \varpi _{2,k} {v^\varepsilon\left( t \right)}}
  \right\|_{H_g \left( \calo \right)}^2 } }  \\
  &\le 2
  \overline g |\calo|( \beta\sum\limits_{k = 1}^\infty  {\left\|
  {\sigma _{1,k} } \right\|_{L^\infty
   \left( {\widetilde \calo} \right)}^2 }+\alpha\sum\limits_{k = 1}^\infty  {\left\|
  {\sigma _{2,k} } \right\|_{L^\infty
   \left( {\widetilde \calo} \right)}^2 })\\
  &\quad + 4\overline g |\calo|
\beta\sum\limits_{k = 1}^\infty
    {\beta _{k}^2 }
     + \sum\limits_{k = 1}^\infty  {(4\gamma _{k}^2+2\varpi _{2,k}^2)
    {\left( {\beta\left\| {u^\varepsilon\left( t \right)}
     \right\|_{H_g \left( \calo \right)}^2+\alpha\left\| {v^\varepsilon\left( t \right)}
     \right\|_{H_g \left( \calo \right)}^2 } \right)} } .
 \end{split}
 \end{align}
 It follows from   \eqref{sb1}-\eqref{sb4}
 that for all $t> \tau$,
\begin{align}\label{sb5}
\begin{split}
&
{\frac {d}{dt}}
\E\left( {\beta\left\| {u^\varepsilon\left( t \right)}
\right\|_{H_g \left( \calo \right)}^2+\alpha\left\| {v^\varepsilon\left( t \right)}
\right\|_{H_g \left( \calo \right)}^2 } \right)\\
&\quad+( \lambda-4 \sum\limits_{k = 1}^\infty  (\gamma _{k}^2+\varpi _{2,k}^2))\E
\left( {\beta\left\| {u^\varepsilon\left( t \right)}
\right\|_{H_g \left( \calo \right)}^2+\alpha\left\| {v^\varepsilon\left( t \right)}
\right\|_{H_g \left( \calo \right)}^2 } \right)\\
&\quad +
2\beta{\E\left( {a_\varepsilon  \left( {u^\varepsilon\left( t \right),
u^\varepsilon\left( t \right)} \right)} \right)}+
2\beta\alpha _1 \underline g
 {\E\left(\left\| {u^\varepsilon\left( t
  \right)} \right\|_{L^p \left( \calo\right)}^p \right)}+2\varpi_2\left\| {v^\varepsilon\left( t \right)}
\right\|_{H_g \left( \calo \right)}^2\\
  &\le \frac{1}{{\lambda }}\overline g |\calo|(\beta \left\|
   {G_1(t,\cdot) } \right\|_{L^\infty  \left( {\widetilde \calo} \right)}^2+\alpha\left\|
   {G_2(t,\cdot) } \right\|_{L^\infty  \left( {\widetilde \calo} \right)}^2 )+ c_1,
 \end{split}
 \end{align}
where $\varpi_2=\sum\limits_{k = 1}^\infty \varpi _{2,k}^2$.

Multiplying \eqref{sb5} by  $e^{\zeta t}$ and then integrating the resulting
inequality on $(\tau-t,\tau)$ with $t\in \R^+$, we get
\begin{align}\label{sb6}
\begin{split}
 &\E\left( {\left\| {(\beta u^\varepsilon\left(\tau,\tau- t,\xi^\varepsilon \right)
 ,\alpha v^\varepsilon\left(\tau,\tau- t,\xi^\varepsilon \right))}
 \right\|_{\mathbb H_g \left( \calo \right)}^2 } \right) \\
 &\quad+ 2\beta\int_{\tau-t}^\tau
 {e^{ - \zeta \left( {\tau - s} \right)} \E\left( {a_\varepsilon
 \left( {u^\varepsilon\left( s, \tau- t,\xi^\varepsilon \right),
 u^\varepsilon\left( s, \tau- t,\xi^\varepsilon \right)} \right)} \right)} ds \\
 &\quad + 2\beta\alpha _1
 \underline g \int_{\tau-t}^\tau {e^{ - \zeta \left( {\tau - s} \right)}
  \E\left( {\left\| {u^\varepsilon\left( s,\tau- t,\xi^\varepsilon \right)} \right\|_{L^p
  \left( \calo \right)}^p} \right)} ds\\
 &\quad +2\varpi_2
 \underline g \int_{\tau-t}^\tau {e^{ - \zeta \left( {\tau - s} \right)}
  \E\left( {\left\| {v^\varepsilon\left( s,\tau- t,\xi^\varepsilon \right)} \right\|_{L^2
  \left( \calo \right)}^2} \right)} ds  \\
  & \le \E\left (
  \left\| \xi^\varepsilon  \right\|_{\mathbb H_g
 \left( \calo \right)}^2 \right )
 e^{ - \zeta t} +\frac{1}{\lambda}\overline g |\calo|\int_{\tau-t}^\tau {e^{ - \zeta \left( {\tau - s} \right)}
  (\beta\left\| {G_1(s,\cdot)  } \right\|^2_{L^\infty
   \left( {\widetilde \calo} \right)}+\alpha\left\| {G_2(s,\cdot)  } \right\|^2_{L^\infty
   \left( {\widetilde \calo} \right)})} ds + \frac{{1 }}{\zeta }c_1\\
  & \le \E\left (
  \left\| \xi^\varepsilon  \right\|_{\mathbb H_g
 \left( \calo \right)}^2 \right )
 e^{ - \zeta t} +\frac{1}{\lambda}\overline g |\calo|\int_{-\infty}^\tau {e^{ - \zeta \left( {\tau - s} \right)}
  (\beta\left\| {G_1(s,\cdot)  } \right\|^2_{L^\infty
   \left( {\widetilde \calo} \right)}+\alpha\left\| {G_2(s,\cdot)  } \right\|^2_{L^\infty
   \left( {\widetilde \calo} \right)})} ds + \frac{{1 }}{\zeta }c_1.
 \end{split}
 \end{align}
Since   $\mathcal L(\xi^\varepsilon)  \in D_1\left( {\tau  - t } \right)$ we have
$$
e^{ -\zeta t} \E\left (
\left\| \xi^\varepsilon  \right\|_{\mathbb H_g
	\left( \calo \right)}^2 \right )
\le \overline g
 e^{ -\zeta t} \E\left (
\left\| \xi^\varepsilon  \right\|_{\mathbb L^2
	\left( \calo \right)}^2 \right )
\le
\overline g
e^{ - \zeta t} \left\| {D_1 \left( {\tau  - t } \right)} \right\|_{
 	\mathbb L^2  \left( \mathcal{O} \right)}^2
 \to 0,\quad \text{ as }
 t\to \infty,
$$
and hence
   there exists $T = T (\tau,  D_1) > 0$ such that for all $t\geq T$,
\[
e^{ -\zeta t} \E\left (
  \left\| \xi^\varepsilon  \right\|_{\mathbb H_g
 \left( \calo \right)}^2 \right )
   \le \frac{{c_1 }}{\zeta},
\]
which along with
  \eqref{sb6} and \eqref{4.1}
  concludes the proof.

\end{proof}

By Lemma \ref{Tsb2}, we have the following uniform estimates.

\begin{lem}\label{aTsb2}
Assume  \eqref{cs1}-\eqref{La} and \eqref{acg} hold. Then
for every $\tau\in \R$ and $D_1=\{D_1(t):t\in \R\}\in \mathcal D_1$, there exists   $T=T(\tau,D_1)>1$,
independent of $\varepsilon$, such that for  all
$t\geq T$ and $0<\varepsilon<\varepsilon_0$,
\begin{align*}
\begin{split}
&\int_{\tau  - 1}^\tau  {\E\left( {\left\| {u^\varepsilon  \left( s,\tau-t,\xi^\varepsilon
 \right)}
 \right\|_{H^1_\varepsilon \left( \calo \right)}^2+\left\| {v^\varepsilon\left( s,\tau- t,\xi^\varepsilon \right)} \right\|_{L^2
  \left( \calo \right)}^2 } \right)} ds \\
  &\quad \le M_2
 +M_2
 \int_{-\infty}^\tau e^{ -\zeta \left( {\tau - s} \right)}
\left (
\left\|  {G_1(s,\cdot)} \right\|_{L^\infty( { {\widetilde {\mathcal O}} } )}^2
+
\left\|  {G_2(s,\cdot)} \right\|^2_{L^\infty( { {\widetilde {\mathcal O}} } )}
\right ) ds
 ,
 \end{split}
\end{align*}
 where
 $\xi^\varepsilon  \in
 L^2_{\mathcal{F}_{\tau-t}}(\Omega, \mathbb  L^2(\mathcal {O}))$
 with
$\mathcal L(\xi^\varepsilon)\in D_1(\tau-t)$, and
  $M_2>0$ is a   constant  independent of $\tau$, $\varepsilon$  and $D_1$.
\end{lem}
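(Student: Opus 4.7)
The plan is to deduce Lemma \ref{aTsb2} directly from the estimate \eqref{ee} in Lemma \ref{Tsb2} by removing the exponential weight on the interval $[\tau-1,\tau]$. Since $\tau - s \leq 1$ for all $s \in [\tau-1,\tau]$, we have the pointwise lower bound $e^{-\zeta(\tau-s)} \geq e^{-\zeta}$ on this interval, so integrating without the weight can only increase the left-hand side of \eqref{ee} by at most the factor $e^{\zeta}$.

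More concretely, I would first fix $\tau \in \R$ and $D_1 \in \mathcal D_1$, and let $T_0 = T_0(\tau,D_1) > 0$ be the time furnished by Lemma \ref{Tsb2}. Set $T = \max\{T_0, 2\} > 1$. For any $t \geq T$, we have $\tau - t \leq \tau - 1$, hence $[\tau-1,\tau] \subseteq [\tau-t,\tau]$. Then I would write
\begin{align*}
&\int_{\tau-1}^\tau \E\!\left(\|u^\varepsilon(s,\tau-t,\xi^\varepsilon)\|_{H^1_\varepsilon(\calo)}^2 + \|v^\varepsilon(s,\tau-t,\xi^\varepsilon)\|_{L^2(\calo)}^2\right) ds \\
&\quad \leq e^{\zeta} \int_{\tau-1}^\tau e^{-\zeta(\tau-s)} \E\!\left(\|u^\varepsilon(s,\tau-t,\xi^\varepsilon)\|_{H^1_\varepsilon(\calo)}^2 + \|v^\varepsilon(s,\tau-t,\xi^\varepsilon)\|_{L^2(\calo)}^2\right) ds \\
&\quad \leq e^{\zeta} \int_{\tau-t}^\tau e^{-\zeta(\tau-s)} \E\!\left(\|u^\varepsilon(s,\tau-t,\xi^\varepsilon)\|_{H^1_\varepsilon(\calo)}^2 + \|v^\varepsilon(s,\tau-t,\xi^\varepsilon)\|_{L^2(\calo)}^2\right) ds,
\end{align*}
and apply \eqref{ee} to the last integral, obtaining the desired estimate with $M_2 := e^{\zeta} M_1$, which is independent of $\tau$, $\varepsilon$ and $D_1$ because $M_1$ and $\zeta$ are.

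Since the argument is an immediate consequence of \eqref{ee}, there is no genuine obstacle to overcome here; the only point that needs attention is ensuring $T > 1$ so that the inclusion $[\tau-1,\tau] \subseteq [\tau-t,\tau]$ holds for every admissible $t$, which is handled by replacing the time from Lemma \ref{Tsb2} by its maximum with a constant exceeding $1$.
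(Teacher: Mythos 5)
Your argument is correct and is exactly the route the paper intends: the paper states this lemma without proof, remarking only that it follows from Lemma \ref{Tsb2}, and the intended deduction is precisely your observation that $e^{-\zeta(\tau-s)}\ge e^{-\zeta}$ on $[\tau-1,\tau]$ together with the nonnegativity of the integrand, the inclusion $[\tau-1,\tau]\subseteq[\tau-t,\tau]$ for $t\ge T>1$, and the bound \eqref{ee}, yielding $M_2=e^{\zeta}M_1$.
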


Recall   the following inequality
from \cite[Lemma 3.3]{li2017limiting} which will be used
to derive    the
uniform estimates of solutions in $H_\varepsilon^1(\mathcal O).$

\begin{lem}\label{Tsb3}
If  (\ref{cf1})-(\ref{cf4})  hold, then for $u\in D(A_\varepsilon)$,
  $$
  (f_\varepsilon(\cdot,u), A_\varepsilon u)_{H_g(\mathcal{O})} \leq
 M \left (
 a_\varepsilon(u,u)+\|\psi_3\|_{L^\infty(
  { {\widetilde {\mathcal O}} } )}^2
 \right ),
 $$
 where $M$ is a positive constant   independent of $\varepsilon$.
\end{lem}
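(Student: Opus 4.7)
The plan is to use the symmetry of the bilinear form $a_\varepsilon$ together with an exact chain rule computation that exposes a cancellation responsible for the $\varepsilon$-independence of $M$. Since $a_\varepsilon(u,v)=(J^*\nabla_y u,J^*\nabla_y v)_{H_g(\calo)}$ is symmetric and $u\in D(A_\varepsilon)$, I first rewrite
\[
(f_\varepsilon(\cdot,u),A_\varepsilon u)_{H_g(\calo)} = a_\varepsilon(u,f_\varepsilon(\cdot,u)) = a_\varepsilon(f_\varepsilon(\cdot,u),u),
\]
provided $f_\varepsilon(\cdot,u)\in H^1(\calo)$, which follows from $u\in H^2(\calo)$ (by $D(A_\varepsilon)\subset H^2(\calo)$) and the polynomial bounds \eqref{c3.2}, \eqref{c3.4} on $f$.

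The core computation is the chain rule for $\nabla_y f_\varepsilon(y,u(y))$ with $y=(y^*,y_{n+1})\in\calo$ and the ``physical'' point $x=(y^*,\varepsilon g(y^*)y_{n+1})\in{\mathcal O}_\varepsilon$. Differentiating $f_\varepsilon(y,s)=f(y^*,\varepsilon g(y^*)y_{n+1},s)$ gives for $i=1,\dots,n$
\[
\partial_{y_i}f_\varepsilon = \partial_{x_i}f+\varepsilon g_{y_i}y_{n+1}\partial_{x_{n+1}}f +\partial_s f\,\partial_{y_i}u,\qquad
\partial_{y_{n+1}}f_\varepsilon = \varepsilon g\,\partial_{x_{n+1}}f+\partial_s f\,\partial_{y_{n+1}}u.
\]
Substituting these into $J^*\nabla_y f_\varepsilon$ and using the structure of $J^*$ I expect the terms $\varepsilon g_{y_i}y_{n+1}\partial_{x_{n+1}}f$ and $\frac{g_{y_i}}{g}y_{n+1}\cdot\varepsilon g\,\partial_{x_{n+1}}f$ to cancel in the first $n$ components, and the prefactor $\frac{1}{\varepsilon g}$ in the $(n{+}1)$-th component to absorb the $\varepsilon g$ from the chain rule. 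The net outcome I am aiming for is the clean identity
\[
J^*\nabla_y f_\varepsilon(y,u) = \nabla_x f(x,u) + \partial_s f(x,u)\,J^*\nabla_y u,
\]
which is precisely where the $\varepsilon$-independence originates.

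Plugging this identity into $a_\varepsilon(f_\varepsilon(\cdot,u),u)$ I get
\[
a_\varepsilon(f_\varepsilon(\cdot,u),u) = \int_\calo g\,\nabla_x f(x,u)\cdot J^*\nabla_y u\,dy + \int_\calo g\,\partial_s f(x,u)\,|J^*\nabla_y u|^2\,dy.
\]
The second integral is bounded by $\varrho_1\,a_\varepsilon(u,u)$ using \eqref{c3.3}. For the first integral I apply Cauchy--Schwarz together with the bound $|\nabla_x f|\le \psi_3(x)$ from \eqref{c3.4}, followed by Young's inequality:
\[
\int_\calo g\,\nabla_x f\cdot J^*\nabla_y u\,dy \le \tfrac12\, a_\varepsilon(u,u)+\tfrac12\,\overline g\,|\calo|\,\|\psi_3\|_{L^\infty(\widetilde\calo)}^2.
\]
Combining the two estimates yields $(f_\varepsilon(\cdot,u),A_\varepsilon u)_{H_g(\calo)}\le M\bigl(a_\varepsilon(u,u)+\|\psi_3\|_{L^\infty(\widetilde\calo)}^2\bigr)$ with $M$ depending only on $\varrho_1$, $\overline g$ and $|\calo|$.

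The main obstacle is verifying the cancellation in the chain rule expansion of $J^*\nabla_y f_\varepsilon$: a single sign or coefficient error there would leave residual $\varepsilon$-dependent terms, destroying the uniformity in $\varepsilon$ that is the whole point of the lemma. Secondary technical care is needed to justify that $f_\varepsilon(\cdot,u)\in H^1(\calo)$ so that the symmetry step $a_\varepsilon(f_\varepsilon(\cdot,u),u)=(f_\varepsilon(\cdot,u),A_\varepsilon u)_{H_g(\calo)}$ is legitimate; this is standard given \eqref{c3.2}, \eqref{c3.4} and $u\in D(A_\varepsilon)\subset H^2(\calo)$.
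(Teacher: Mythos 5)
Your proof is correct, but it is worth noting that the paper does not actually prove this lemma at all: it is imported verbatim as Lemma~3.3 of the cited reference \cite{li2017limiting}, so your self-contained derivation is genuinely supplying an argument the paper omits. I checked the key cancellation and it holds exactly as you predicted: writing $w(y)=f_\varepsilon(y,u(y))=f(y^*,\varepsilon g(y^*)y_{n+1},u(y))$, one has for $i\le n$
\[
w_{y_i}-\tfrac{g_{y_i}}{g}y_{n+1}w_{y_{n+1}}
= f_{x_i}+\varepsilon g_{y_i}y_{n+1}f_{x_{n+1}}+f_s u_{y_i}
-\tfrac{g_{y_i}}{g}y_{n+1}\bigl(\varepsilon g\,f_{x_{n+1}}+f_s u_{y_{n+1}}\bigr)
= f_{x_i}+f_s\,(J^*\nabla_y u)_i,
\]
and $\tfrac{1}{\varepsilon g}w_{y_{n+1}}=f_{x_{n+1}}+f_s\,(J^*\nabla_y u)_{n+1}$, so indeed $J^*\nabla_y f_\varepsilon(\cdot,u)=\nabla_x f+f_s\,J^*\nabla_y u$ with no residual $\varepsilon$-dependence. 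From there the one-sided bound \eqref{c3.3} suffices for the second integral because the weight $g\,|J^*\nabla_y u|^2$ is nonnegative, and Cauchy--Schwarz plus Young handles the first, giving $M=\max\{\varrho_1+\tfrac12,\ \tfrac12\overline g\,|\calo|\}$. The only soft spot is the one you flagged yourself: \eqref{c3.3} controls $\partial_s f$ only from above, so $f_\varepsilon(\cdot,u)\in H^1(\calo)$ is not automatic from the stated hypotheses (in principle $\partial_s f$ could be very negative and $\partial_s f\,\nabla u$ could fail to be in $L^2$); one should either integrate by parts directly against $P_\varepsilon u$ using the boundary condition $P_\varepsilon u\cdot\nu=0$, or run the identity through a truncation of $f$ and pass to the limit, since the final upper bound is insensitive to how negative $\partial_s f$ is. This is a routine repair and does not affect the validity of the estimate.
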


\begin{lem}\label{Tsb4}
Suppose \eqref{cs1}-\eqref{La} and \eqref{acg} hold.
  Then for every $\tau\in \R$ and $D_1=\{D_1(t):t\in \R\}\in \mathcal D_1$, there exists   $T=T(\tau,D_1)>1$,
independent of $\varepsilon$, such that for  all
$t\geq T$ and $0<\varepsilon<\varepsilon_0$,
 \begin{align}\label{aL5.1}
 \begin{split}
&\E(\left\| {u^\varepsilon\left( {\tau,\tau-t,\xi^\varepsilon} \right)} \right\|_{H^1_\varepsilon(\mathcal O)  }^2)
\le M_3+M_3\int_{-\infty}^\tau e^{ - \zeta \left( {\tau - s} \right)}
  \left (
\left\|  {G_1(s,\cdot)} \right\|_{L^\infty( { {\widetilde {\mathcal O}} } )}^2
+
\left\|  {G_2(s,\cdot)} \right\|^2_{L^\infty( { {\widetilde {\mathcal O}} } )}
\right )ds,
  \end{split}
 \end{align}
where
$\xi^\varepsilon  \in
L^2_{\mathcal{F}_{\tau-t}}(\Omega,  \mathbb L^2(\mathcal {O}))$
with
  $\mathcal L(\xi^\varepsilon)\in D_1(\tau-t)$,
  and    $M_3>0$ is a   constant  independent of $\tau$, $D_1$
  and   $\varepsilon$.
\end{lem}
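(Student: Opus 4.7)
The plan is to upgrade the integrated estimate from Lemma \ref{aTsb2} to a pointwise-in-time bound at $s=\tau$ via a uniform Gronwall argument, whose driving differential inequality comes from applying It\^o's formula to $a_\varepsilon(u^\varepsilon(t),u^\varepsilon(t))$. First I would (formally, justified via Galerkin truncation as usual) take the $H_g(\mathcal O)$-inner product of the $u^\varepsilon$-equation in \eqref{aeu3} with $A_\varepsilon u^\varepsilon$ and apply It\^o's formula to the quadratic form $\Phi(u)=a_\varepsilon(u,u)=(A_\varepsilon u,u)_{H_g(\mathcal O)}$, producing for $t>\tau$ the identity
\begin{align*}
& \frac{d}{dt}\E\!\left[a_\varepsilon(u^\varepsilon,u^\varepsilon)\right] + 2\,\E\|A_\varepsilon u^\varepsilon\|_{H_g(\mathcal O)}^2 + 2\lambda\,\E\!\left[a_\varepsilon(u^\varepsilon,u^\varepsilon)\right] \\
& \qquad = -2\alpha\,\E\bigl(A_\varepsilon u^\varepsilon,v^\varepsilon\bigr)_{H_g(\mathcal O)} + 2\,\E\bigl(f_\varepsilon(\cdot,u^\varepsilon),A_\varepsilon u^\varepsilon\bigr)_{H_g(\mathcal O)} \\
& \qquad\quad + 2\,\E\bigl(G_{1,\varepsilon},A_\varepsilon u^\varepsilon\bigr)_{H_g(\mathcal O)} + \sum_{k=1}^\infty \E\!\left[a_\varepsilon\bigl(B_k^\varepsilon,B_k^\varepsilon\bigr)\right],
\end{align*}
where $B_k^\varepsilon:=\sigma_{1,k,\varepsilon}+\varpi_{1,k}(u^\varepsilon)$.

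Next I would bound each term on the right. Lemma \ref{Tsb3} controls the nonlinearity by $2M\bigl(a_\varepsilon(u^\varepsilon,u^\varepsilon)+\|\psi_3\|_{L^\infty(\widetilde{\mathcal O})}^2\bigr)$; Young's inequality yields $|2\alpha(A_\varepsilon u^\varepsilon,v^\varepsilon)|+|2(G_{1,\varepsilon},A_\varepsilon u^\varepsilon)|\le \tfrac12\|A_\varepsilon u^\varepsilon\|_{H_g(\mathcal O)}^2 + C\,\E\|v^\varepsilon\|_{L^2(\mathcal O)}^2 + C\|G_1(t,\cdot)\|_{L^\infty(\widetilde{\mathcal O})}^2$, with the second-order terms then absorbed by the coercive left-hand side. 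For the It\^o correction I would split $a_\varepsilon(B_k^\varepsilon,B_k^\varepsilon)\le 2a_\varepsilon(\sigma_{1,k,\varepsilon},\sigma_{1,k,\varepsilon})+2a_\varepsilon(\varpi_{1,k}(u^\varepsilon),\varpi_{1,k}(u^\varepsilon))$, where the latter is dominated by $2L_k^2 a_\varepsilon(u^\varepsilon,u^\varepsilon)$ via the chain rule and \eqref{cv3}.

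The key technical point, which I expect to be the main obstacle, is an $\varepsilon$-uniform bound on $\sum_k a_\varepsilon(\sigma_{1,k,\varepsilon},\sigma_{1,k,\varepsilon})$, because the $(n{+}1)$-st component of $J^*\nabla_y$ carries the dangerous prefactor $1/\varepsilon$. The observation that rescues the estimate is that the chain-rule derivative of $\sigma_{1,k,\varepsilon}(y)=\sigma_{1,k}(y^*,\varepsilon g(y^*)y_{n+1})$ in $y_{n+1}$ contributes a factor $\varepsilon g(y^*)$, so $(J^*\nabla_y\sigma_{1,k,\varepsilon})_{n+1}=\tfrac{1}{\varepsilon g}(\sigma_{1,k,\varepsilon})_{y_{n+1}}=(\partial_{x_{n+1}}\sigma_{1,k})(y^*,\varepsilon g(y^*)y_{n+1})$, and the horizontal components rearrange into bounded multiples of $\partial_{x_i}\sigma_{1,k}$. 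Invoking \eqref{cs2} then gives $\sum_k a_\varepsilon(\sigma_{1,k,\varepsilon},\sigma_{1,k,\varepsilon})\le C\sum_k\|\sigma_{1,k}^*\|_{L^\infty(\widetilde{\mathcal O})}^2<\infty$ independently of $\varepsilon$.

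Assembling these bounds delivers, with $\varepsilon$-independent constants,
\begin{equation*}
\frac{d}{dt}\E\!\left[a_\varepsilon(u^\varepsilon,u^\varepsilon)\right] \le c_1\,\E\!\left[a_\varepsilon(u^\varepsilon,u^\varepsilon)\right] + c_2\,\E\|v^\varepsilon\|_{L^2(\mathcal O)}^2 + c_3\|G_1(t,\cdot)\|_{L^\infty(\widetilde{\mathcal O})}^2 + c_4.
\end{equation*}
Finally I would apply the uniform Gronwall lemma on the interval $[\tau-1,\tau]$, using the integrated bound on $\int_{\tau-1}^\tau \E\!\left[\|u^\varepsilon\|_{H^1_\varepsilon(\mathcal O)}^2+\|v^\varepsilon\|_{L^2(\mathcal O)}^2\right]ds$ supplied by Lemma \ref{aTsb2} together with the convergent forcing integral in \eqref{acg}. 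Combining the resulting pointwise bound on $\E[a_\varepsilon(u^\varepsilon(\tau),u^\varepsilon(\tau))]$ with the $L^2(\mathcal O)$-bound from Lemma \ref{Tsb2} and invoking the norm equivalence \eqref{4.1} then yields \eqref{aL5.1}.
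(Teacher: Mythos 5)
Your proposal is correct and follows essentially the same route as the paper: Itô's formula applied to $a_\varepsilon(u^\varepsilon,u^\varepsilon)$, the coupling and forcing terms absorbed into $\|A_\varepsilon u^\varepsilon\|_{H_g(\mathcal O)}^2$ by Young's inequality, the nonlinearity handled by Lemma \ref{Tsb3}, the Itô correction split and bounded via \eqref{cv3} and \eqref{cs2}, and then a uniform-Gronwall step on $[\tau-1,\tau]$ (the paper realizes it by integrating the resulting inequality in the initial time $\varsigma$ over $(\tau-1,\tau)$), closed by Lemma \ref{aTsb2}, Lemma \ref{Tsb2} and the norm equivalence \eqref{4.1}. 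Your explicit cancellation of the $1/(\varepsilon g)$ factor against the chain-rule factor $\varepsilon g$ in $a_\varepsilon(\sigma_{1,k,\varepsilon},\sigma_{1,k,\varepsilon})$ is exactly the point the paper delegates to \cite[Lemma 3.3]{li2017limiting}.
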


\begin{proof}
By Ito's formula and \eqref{aeu3}, we
get for    $\tau\in \R$, $t>1$ and  $\varsigma\in (\tau-1,\tau)$,
\begin{align}\label{atu1}
\begin{split}
 &a_\varepsilon  \left( {u^\varepsilon\left( \tau,\tau-t,\xi^\varepsilon \right),
 u^\varepsilon\left( \tau,\tau-t,\xi^\varepsilon \right)} \right) +
  2\int_\varsigma^\tau {\left\| {A_\varepsilon
  u^\varepsilon\left( s,\tau-t,\xi^\varepsilon \right)} \right\|_{H_g \left(\calo \right)}^2 ds}\\
   &\quad  + 2\lambda \int_\varsigma^\tau  {a_\varepsilon
   \left( {u^\varepsilon\left( s,\tau-t,\xi^\varepsilon \right),
   u^\varepsilon\left( s,\tau-t,\xi^\varepsilon \right)}
     \right)  ds} \\
  &\quad   + 2\alpha \int_\varsigma^\tau  {(v^\varepsilon\left( s,\tau-t,\xi^\varepsilon \right),A_\varepsilon
  u^\varepsilon\left( s,\tau-t,\xi^\varepsilon \right))_{H_g \left( \calo \right)} ds} \\
 & = a_\varepsilon  \left( {u^\varepsilon\left( \varsigma,\tau-t,\xi^\varepsilon \right),
 u^\varepsilon\left(\varsigma,\tau-t,\xi^\varepsilon \right)} \right)\\
 &\quad +2\int_\varsigma^\tau  {\left( {f_\varepsilon  \left( {\cdot
 ,u^\varepsilon\left( s,\tau-t,\xi^\varepsilon \right)} \right),
 A_\varepsilon  u^\varepsilon\left( s,\tau-t,\xi^\varepsilon \right)}
  \right)_{H_g \left( \calo \right)} ds}  \\
  &\quad+ 2\int_\varsigma^\tau  {\left( {G_{1,\varepsilon}(s,\cdot) ,A_\varepsilon
    u^\varepsilon\left( s,\tau-t,\xi^\varepsilon \right)} \right)_{H_g \left( \calo \right)} ds}  \\
 &\quad + \sum\limits_{k = 1}^\infty  {\int_\varsigma^\tau  {a_\varepsilon
   \left( {\sigma _{1,k,\varepsilon }
  +\varpi_{1,k} \left( {u^\varepsilon\left( s,\tau-t,\xi^\varepsilon \right)}
   \right),\sigma _{1,k,\varepsilon }
   + \varpi _{1,k} \left( {u^\varepsilon\left( s,\tau-t,\xi^\varepsilon \right)} \right)} \right)} } ds \\
  &\quad+ 2\sum\limits_{k = 1}^\infty
   {\int_\varsigma^\tau  {\left( {\sigma _{1,k,\varepsilon }
    + \varpi _{1,k} \left( {u^\varepsilon\left( s,\tau-t,\xi^\varepsilon \right)} \right),A_\varepsilon
   u^\varepsilon\left( s,\tau-t,\xi^\varepsilon \right)} \right)_{H_g \left( \calo \right)}} } dW_k \left( s \right).
 \end{split}
\end{align}

For the fourth  term on the left-hand side of (\ref{atu1}) is bounded by
\begin{align}\label{aatu3}
\begin{split}
 &|2\alpha\int_\varsigma^t {\left( {v^\varepsilon\left( s,\tau-t,\xi^\varepsilon \right),
 A_\varepsilon  u^\varepsilon\left( s,\tau-t,\xi^\varepsilon \right)} \right)_{H_g \left( \calo \right)} ds} | \\
 & \le
  \int_\varsigma^t {\left\| {A_\varepsilon  u^\varepsilon\left( s,\tau-t,\xi^\varepsilon \right)} \right\|_{H_g \left( \calo\right)}^2 ds}
   +\alpha^2\int_\varsigma^t\|v^\varepsilon\left( s,\tau-t,\xi^\varepsilon \right)\|_{H_g \left( \calo\right)}^2ds.
 \end{split}
\end{align}
It follows from   Lemma {\ref{Tsb3}} that
\begin{align}\label{atu2}
\begin{split}
 &2\int_\varsigma^\tau {\left( {f_\varepsilon  \left( {\cdot,u^\varepsilon\left( s,\tau-t,\xi^\varepsilon \right)} \right),A_\varepsilon
  u^\varepsilon\left( s,\tau-t,\xi^\varepsilon \right)} \right)_{H_g \left( \calo \right)} ds}  \\
  &\le   2c_1\int_\varsigma^\tau {a_\varepsilon  \left( {u^\varepsilon\left( s,\tau-t,\xi^\varepsilon \right)
  ,u^\varepsilon\left( {{s,\tau-t,\xi^\varepsilon}} \right)} \right)ds}  +
  2c_1\left\| {\psi _3 } \right\|_{L^\infty  \left( {\widetilde \calo} \right)}^2(\tau-\varsigma) .
 \end{split}
\end{align}
For  the third term on the right-hand side of (\ref{atu1})
we have
\begin{align}\label{atu3}
\begin{split}
 &2\int_\varsigma^\tau {\left( {G_{1,\varepsilon}(s,\cdot) ,A_\varepsilon
  u^\varepsilon\left( s,\tau-t,\xi^\varepsilon \right)} \right)_{H_g \left( \calo \right)} ds}  \\
 & \le \int_\varsigma^\tau {\left\| {A_\varepsilon
  u^\varepsilon\left( s,\tau-t,\xi^\varepsilon \right)} \right\|_{H_g \left( \calo\right)}^2 ds}
   +\overline g |\calo| \int_\varsigma^\tau\left\| {G_1(s,\cdot)}
    \right\|_{L^\infty  \left( {\widetilde \calo} \right)}^2 ds.
 \end{split}
\end{align}
By \eqref{cv3},
after simple calculations,
we   get
\begin{align}\label{atu4}
\begin{split}
 &\sum\limits_{k = 1}^\infty  {\int_\varsigma^\tau {a_\varepsilon
  \left( {\sigma _{1,k,\varepsilon }
 + \varpi _k \left( {u^\varepsilon\left( s,\tau-t,\xi^\varepsilon \right)} \right),
 \sigma _{1,k,\varepsilon }
 + \varpi_{1,k} \left( {u^\varepsilon\left( s,\tau-t,\xi^\varepsilon \right)} \right)} \right)ds} }  \\
 & \le 2 \sum\limits_{k = 1}^\infty
   {\int_\varsigma^\tau {\left( {a_\varepsilon
  \left(
   {\varpi_{1,k} \left( {u^\varepsilon\left( s,\tau-t,\xi^\varepsilon \right)} \right)
   ,\varpi _{1,k} \left( {u^\varepsilon\left( s,\tau-t,\xi^\varepsilon \right)} \right)} \right)
   +  a_\varepsilon  \left( {\sigma _{1,k,\varepsilon }
    ,\sigma _{1,k,\varepsilon } } \right)}
    \right )
    ds} }  \\
  &\le 2 \sum\limits_{k = 1}^\infty
   {  L_k ^2 } \int_\varsigma^\tau {a_\varepsilon
    \left( {u^\varepsilon\left( s,\tau-t,\xi^\varepsilon \right),
    u^\varepsilon\left( s,\tau-t,\xi^\varepsilon \right)} \right)ds}  +
    \overline g |\calo|
    \sum\limits_{k = 1}^\infty
     {\left\| {\sigma_{k}^{*}} \right\|_{L^\infty  \left( {\widetilde \calo} \right)}^2 }(\tau-\varsigma).
 \end{split}
\end{align}

By (\ref{atu1})-(\ref{atu4})   we
obtain
\begin{align}\label{atu5}
\begin{split}
 &\E\left( {a_\varepsilon  \left( {u^\varepsilon\left( \tau,\tau-t,\xi^\varepsilon \right),
 u^\varepsilon\left( \tau,\tau-t,\xi^\varepsilon \right)} \right)} \right)\\
  &\le\E\left( {a_\varepsilon  \left( {u^\varepsilon\left( \varsigma \right),
  u^\varepsilon\left( \varsigma \right)} \right)} \right)
   +\alpha^2\int_\varsigma^t\E\left(\|v^\varepsilon\left( s,\tau-t,\xi^\varepsilon \right)\|_{H_g \left( \calo\right)}^2\right)ds\\
  &\quad + c_2 \int_\varsigma^\tau {\E\left( {a_\varepsilon
   \left( {u^\varepsilon\left( s,\tau-t,\xi^\varepsilon \right),
   u^\varepsilon\left( s,\tau-t,\xi^\varepsilon \right)} \right)} \right)ds}
   \\
 &\quad+\overline g |\calo| \int_\varsigma^\tau\left\| {G_1(s,\cdot)}
    \right\|_{L^\infty  \left( {\widetilde \calo} \right)}^2 ds  +  (\overline g |\calo|
    \sum\limits_{k = 1}^\infty
     {\left\| {\sigma_{k}^{*}} \right\|_{L^\infty  \left( {\widetilde \calo} \right)}^2 }+
     2c_1\left\| {\psi _3 } \right\|_{L^\infty  \left( {\widetilde \calo} \right)}^2)(\tau-\varsigma).
 \end{split}
\end{align}
Integrating the inequality
 with respect to $\varsigma$ over $(\tau-1, \tau)$, we find
\begin{align}\label{atu6}
\begin{split}
& \E\left( {a_\varepsilon  \left( {u^\varepsilon\left( \tau,\tau-t,\xi^\varepsilon \right),
 u^\varepsilon\left( \tau,\tau-t,\xi^\varepsilon \right)} \right)} \right)\\
 &\le \left( {c_2  + 1} \right)\int_{\tau - 1}^\tau {\E\left( {a_\varepsilon
  \left( {u^\varepsilon\left( s,\tau-t,\xi^\varepsilon \right),
  u^\varepsilon\left( s,\tau-t,\xi^\varepsilon \right)} \right)} \right)ds}
   +\alpha^2\int_{\tau-1}^\tau\E\left(\|v^\varepsilon\left( s,\tau-t,
   \xi^\varepsilon \right)\|_{H_g \left( \calo\right)}^2\right)ds\\
 &\quad   +\overline g |\calo| \int_{\tau-1}^\tau\left\| {G_1(s,\cdot)}
    \right\|_{L^\infty  \left( {\widetilde \calo} \right)}^2 ds  +  \overline g |\calo|
    \sum\limits_{k = 1}^\infty
     {\left\| {\sigma_{k}^{*}} \right\|_{L^\infty  \left( {\widetilde \calo} \right)}^2 }+
     2c_1\left\| {\psi _3 } \right\|_{L^\infty  \left( {\widetilde \calo} \right)}^2\\
 &\le \left( {c_2  + 1} \right)\int_{\tau - 1}^\tau {\E\left( {a_\varepsilon
  \left( {u^\varepsilon\left( s,\tau-t,\xi^\varepsilon \right),
  u^\varepsilon\left( s,\tau-t,\xi^\varepsilon \right)} \right)} \right)ds}
   +\alpha^2\int_{\tau-1}^\tau\E\left(\|v^\varepsilon\left( s,\tau-t,\xi^\varepsilon \right)\|_{H_g \left( \calo\right)}^2\right)ds\\
 &\quad   +\overline g |\calo|e^{\gamma\tau} \int_{\tau-1}^\tau e^{-\gamma(\tau-s)} \left\| {G_1(s,\cdot)}
    \right\|_{L^\infty  \left( {\widetilde \calo} \right)}^2 ds  +  \overline g |\calo|
    \sum\limits_{k = 1}^\infty
     {\left\| {\sigma_{k}^{*}} \right\|_{L^\infty  \left( {\widetilde \calo} \right)}^2 }+
     2c_1\left\| {\psi _3 } \right\|_{L^\infty  \left( {\widetilde \calo} \right)}^2.
 \end{split}
\end{align}
By \eqref{atu6}  and Lemma
  \ref{aTsb2}, we find that for every $\tau\in \R$
  and $D_1=\{D_1(t):t\in \R\}\in \mathcal D_1$, there exists   $T=T(\tau,D_1)>1$,
independent of $\varepsilon$, such that for  all
$t\geq T$, $0<\varepsilon<\varepsilon_0$
 and
$\xi^\varepsilon  \in L^2_{\mathcal{F}_{\tau-t}}(\Omega, \mathbb  L^2(\mathcal {O}))$
with
 $\mathcal  L(\xi^\varepsilon)\in D_1(\tau-t)$,
\begin{align*}\label{atu7}
\begin{split}
 & \E\left(
 a_\varepsilon  \left( u^\varepsilon(\tau,\tau-t,\xi^\varepsilon),u^\varepsilon(\tau,,\tau-t,\xi^\varepsilon) \right)
 \right ) \le c_3+c_3\int_{-\infty}^\tau e^{ - \zeta \left( {\tau - s} \right)}
 \left (
\left\|  {G_1(s,\cdot)} \right\|_{L^\infty( { {\widetilde {\mathcal O}} } )}^2
+
\left\|  {G_2(s,\cdot)} \right\|^2_{L^\infty( { {\widetilde {\mathcal O}} } )}
\right )ds,
 \end{split}
\end{align*}
which together with Lemma \ref{Tsb2} completes the proof.
\end{proof}

In order to establish the uniform asymptotic compactness of $\mathcal L(v^\varepsilon)$, we need to decompose
$v^\varepsilon$ as a sum of two functions: one is regular in the sense that it belongs to $H^1(\calo)$ and the
other converges to zero in mean square moment as $t\rightarrow\infty$.
 We split $v^\varepsilon$ as $v^\varepsilon=v^\varepsilon_1+v^\varepsilon_2$
where $v_1^\varepsilon$ is the solution of the initial value problem, for $t>\tau$,
\be \label{v1}
dv_1^\varepsilon  \left( t \right) + \gamma v_1^\varepsilon  \left( t \right)
= \sum\limits_{k = 1}^\infty  {\varpi _{2,k} v_1^\varepsilon  \left( t \right)dW_k \left( t \right)},\quad
v_1^\varepsilon(\tau)=\xi_2^\varepsilon(y),\quad y\in \calo,
\ee
and $v^\varepsilon_2$ is the solution of
\begin{align}\label{v21}
\begin{split}
 dv_2^\varepsilon  \left( t \right)& + \gamma v_2^\varepsilon  \left( t \right)dt - \beta u^\varepsilon  dt
 = G_{2,\varepsilon } \left(t, y \right)dt \\
 &+\sum\limits_{k = 1}^\infty  {\left( {\sigma _{2,k,\varepsilon } \left( y \right) +
 \varpi _{2,k} v_2^\varepsilon  \left( t \right)} \right)dW_k \left( t \right)} ,\quad y \in \calo,\quad t > \tau,
 \end{split}
 \end{align}
with initial condition
\be \label{v22}
v^\varepsilon_2(\tau)=0.
\ee
Under condition \eqref{La}, we easily have for every $\tau\in \R$ and $D_1=\{D_1(t):t\in \R\}\in \mathcal D_1$
\begin{align}\label{v1e}
\begin{split}
&\E\left( {\left\| {v_1^\varepsilon  \left(\tau, \tau-t,\xi_2^\varepsilon \right)} \right\|^2 } \right) \le
\E\left(\left\| {\xi _2 } \right\|^2\right) e^{ - \left( {2\gamma  -
\sum\limits_{k = 1}^\infty  {\varpi _{2,k}^2 } } \right)t}\\
&\le\left\| {D_1 } \right\|^2_{\mathcal P_2(\mathbb L^2(\calo))} e^{ - \left( {2\gamma  -
\sum\limits_{k = 1}^\infty  {\varpi _{2,k}^2 } } \right)t}  \to 0, \quad\text{as}\quad t\rightarrow\infty,
 \end{split}
 \end{align}
where
$\xi^\varepsilon=(\xi_1^\varepsilon,\xi^\varepsilon_2)  \in
L^2_{\mathcal{F}_{\tau-t}}(\Omega,   \mathbb L^2(\mathcal {O}))$
with  $\mathcal L(\xi^\varepsilon)\in D_1(\tau-t)$.

Next, we derive uniform estimates for $v^\varepsilon_2$ in ${H_\varepsilon ^1 \left( \calo \right)}$.

\begin{lem}\label{aTsb4}
Suppose \eqref{cs1}-\eqref{La} and \eqref{acg} hold.
  Then for every $\tau\in \R$ and $D_1=\{D_1(t):t\in \R\}\in \mathcal D_1$, there exists   $T=T(\tau,D_1)>1$,
independent of $\varepsilon$, such that for  all
$t\geq T$ and $0<\varepsilon<\varepsilon_0$, the solution $v^\varepsilon_2$ of \eqref{v21}-\eqref{v22} satisfies
 \begin{align*}
 \begin{split}
&\E(\left\| {v^\varepsilon_2\left( {\tau,\tau-t,0} \right)} \right\|_{H^1_\varepsilon(\mathcal O)  }^2)
\le M_4+M_4\int_{-\infty}^\tau e^{ - \zeta \left( {\tau - s} \right)}
 \left(
  \left\|  {G_1(s,\cdot)} \right\|_{L^\infty( { {\widetilde {\mathcal O}} } )}^2
  +
  \left\|  {G_2(s,\cdot)} \right\|^2_{H^{1,\infty}( { {\widetilde {\mathcal O}} } )}
  \right )ds,
  \end{split}
 \end{align*}
where $M_4>0$ is a   constant  independent of $\tau$, $D_1$
  and   $\varepsilon$.
\end{lem}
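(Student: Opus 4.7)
The plan is to mirror the proof of Lemma~\ref{Tsb4}: apply It\^{o}'s formula to a quantity equivalent to $\|v_2^\varepsilon\|_{H^1_\varepsilon(\calo)}^2$, control each forcing term uniformly in $\varepsilon$, and close the estimate by Gronwall. By \eqref{4.1}, it suffices to bound $\E\bigl[\Phi^\varepsilon(t)\bigr] := \E\bigl[a_\varepsilon(v_2^\varepsilon(t),v_2^\varepsilon(t)) + \|v_2^\varepsilon(t)\|_{H_g(\calo)}^2\bigr]$. First, I would apply It\^{o}'s formula to $\Phi^\varepsilon$ using \eqref{v21} and take expectations; this yields an ODI in $\E[\Phi^\varepsilon]$ with net dissipation rate at least $2\gamma - 2\sum_{k=1}^\infty\varpi_{2,k}^2$, and forcing coming from (i) the coupling term $\beta u^\varepsilon$, (ii) the deterministic input $G_{2,\varepsilon}$, and (iii) the additive noise $\sigma_{2,k,\varepsilon}$ together with the It\^{o} correction of the multiplicative part.

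The crucial step is to verify that every forcing contribution admits a bound uniform in $\varepsilon$. For $G_{2,\varepsilon}$ and $\sigma_{2,k,\varepsilon}$, this rests on the structural observation that $G_{2,\varepsilon}(y) = G_2(y^*,\varepsilon g(y^*)y_{n+1})$, so that the $1/(\varepsilon g)$ factor in the last component of $J^*\nabla_y$ is exactly cancelled by the $\varepsilon g$ produced by the chain rule in $y_{n+1}$; this yields $a_\varepsilon(G_{2,\varepsilon},G_{2,\varepsilon}) \leq c\|G_2(t,\cdot)\|^2_{H^{1,\infty}(\widetilde{\calo})}$ uniformly in $\varepsilon$, and together with \eqref{cs2} an analogous uniform bound for $\sum_k a_\varepsilon(\sigma_{2,k,\varepsilon},\sigma_{2,k,\varepsilon})$. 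For the coupling, Young's inequality together with \eqref{4.1} gives $2\beta|a_\varepsilon(v_2^\varepsilon,u^\varepsilon)| + 2\beta|(v_2^\varepsilon,u^\varepsilon)_{H_g(\calo)}| \leq \delta\,\Phi^\varepsilon + C_\delta\|u^\varepsilon\|^2_{H^1_\varepsilon(\calo)}$.

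Exploiting \eqref{La1} to choose $\delta$ small enough that the net dissipation rate exceeds $\zeta$, I would multiply the ODI by $e^{\zeta s}$ and integrate over $(\tau-t,\tau)$. The zero initial condition \eqref{v22} kills the boundary term at $s = \tau-t$, leaving a right-hand side consisting of the tail integrals of $\|G_1(s,\cdot)\|^2_{L^\infty(\widetilde{\calo})}$ and $\|G_2(s,\cdot)\|^2_{H^{1,\infty}(\widetilde{\calo})}$ --- both integrable by \eqref{acg} --- plus the term $C_\delta\int_{\tau-t}^\tau e^{-\zeta(\tau-s)}\E\|u^\varepsilon(s)\|^2_{H^1_\varepsilon(\calo)}\,ds$, which is bounded uniformly in $\varepsilon$ and $t \geq T$ by \eqref{ee} of Lemma~\ref{Tsb2}. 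A final appeal to \eqref{4.1} converts $\E[\Phi^\varepsilon(\tau)]$ into $\E\|v_2^\varepsilon(\tau,\tau-t,0)\|^2_{H^1_\varepsilon(\calo)}$ and produces the claimed bound.

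The main obstacle is uniformity in $\varepsilon$. Since \eqref{v21} carries no Laplacian-type dissipation of its own, spatial regularity of $v_2^\varepsilon$ must be extracted entirely from the source terms, and the $1/\varepsilon^2$ weight in the $H^1_\varepsilon(\calo)$-norm would in principle amplify the forcing by a factor blowing up as $\varepsilon\to 0$. The scaling cancellation described above, combined with the slightly stronger regularity $G_2 \in L^2_{\mathrm{loc}}(\R,H^{1,\infty}(\widetilde{\calo}))$ and the derivative bound \eqref{cs2} that were built into the setup in Section~3, is precisely what rescues the uniformity and makes the argument go through.
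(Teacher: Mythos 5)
Your proposal is correct and follows essentially the same route as the paper: It\^{o}'s formula applied to $a_\varepsilon(v_2^\varepsilon,v_2^\varepsilon)$, Young's inequality on the coupling and $G_{2,\varepsilon}$ terms, the chain-rule cancellation of the $1/(\varepsilon g)$ weight (which is exactly why the paper needs $G_2\in H^{1,\infty}$ and \eqref{cs2}), an exponentially weighted Gronwall estimate with rate exceeding $\zeta$, and closure via \eqref{ee} of Lemma \ref{Tsb2}. The only cosmetic difference is that you include $\|v_2^\varepsilon\|_{H_g(\calo)}^2$ in the It\^{o} functional, whereas the paper estimates only the bilinear form $a_\varepsilon$ and recovers the full $H^1_\varepsilon$-norm afterwards.
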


\begin{proof}
By Ito's formula and \eqref{v21}, we get for $\tau\in \R$ and $t>0$,
\begin{align}\label{tu1}
\begin{split}
 &a_\varepsilon  \left( {v^\varepsilon_2\left( \tau,\tau-t,0 \right),v^\varepsilon_2\left( \tau,\tau-t,0 \right)} \right)
    + 2\gamma \int_{\tau-t}^\tau {a_\varepsilon
     \left( {v^\varepsilon_2\left( s,\tau-t,0 \right),v^\varepsilon_2\left( s,\tau-t,0 \right)}
     \right)  ds}  \\
 & =2\beta\int_{\tau-t}^\tau {\left( {u^\varepsilon(s,\tau-t,\xi^\varepsilon) ,A_\varepsilon
    v^\varepsilon_2\left( s,\tau-t,0 \right)} \right)_{H_g \left( \calo \right)} ds}\\
   &\quad  + 2\int_{\tau-t}^\tau {\left( {G_{2,\varepsilon}(s,\cdot) ,A_\varepsilon
    v^\varepsilon_2\left( s,\tau-t,0 \right)} \right)_{H_g \left( \calo \right)} ds}   \\
 &\quad + \sum\limits_{k = 1}^\infty  {\int_{\tau-t}^\tau {a_\varepsilon
   \left( {\sigma _{2,k,\varepsilon }
  +\varpi_{2,k} {v^\varepsilon_2\left( s,\tau-t,0 \right)},\sigma _{2,k,\varepsilon }
   + \varpi _{2,k} {v^\varepsilon_2\left( s,\tau-t,0 \right)}} \right)} } ds \\
  &\quad+ 2\sum\limits_{k = 1}^\infty
   {\int_{\tau-t}^\tau {\left( {\sigma _{2,k,\varepsilon }
    + \varpi _{2,k} {v^\varepsilon_2\left( s,\tau-t,0 \right)},A_\varepsilon
   v^\varepsilon_2\left( s,\tau-t,0 \right)} \right)_{H_g \left( \calo \right)}} } dW_k \left( s \right).
 \end{split}
\end{align}

We first estimate the first term
on the right-hand side of  (\ref{tu1}). We have
\begin{align}\label{tu2}
\begin{split}
 &2\beta \int_{\tau-t}^\tau {\left( {u^\varepsilon  \left( s,\tau-t,\xi^\varepsilon  \right),
 A_\varepsilon  v^\varepsilon_2  \left( s,\tau-t,0 \right)} \right)} ds\\
&  = 2\beta \int_{\tau-t}^\tau {a_\varepsilon  \left( {u^\varepsilon  \left( s,\tau-t,\xi^\varepsilon  \right),
  v^\varepsilon_2  \left( s,\tau-t,0 \right)} \right)} ds \\
 & \le \frac{\gamma }{2}\int_{\tau-t}^\tau {a_\varepsilon  \left( {v^\varepsilon_2
 \left( s,\tau-t,0 \right),v^\varepsilon_2  \left( s,\tau-t,0 \right)} \right)} ds\\
&\quad + \frac{2}{\gamma }\beta ^2 \int_{\tau-t}^\tau {a_\varepsilon
  \left( {u^\varepsilon  \left( s,\tau-t,\xi^\varepsilon  \right),
  u^\varepsilon  \left( s,\tau-t,\xi^\varepsilon  \right)} \right)} ds.
 \end{split}
\end{align}
On the other hand, the second term on the right-hand side of (\ref{tu1}) is bounded by
\begin{align}\label{tu3}
\begin{split}
 &2 \int_{\tau-t}^\tau {\left( {G_{2,\varepsilon }(s,\cdot) ,A_\varepsilon  v^\varepsilon_2  \left( s,\tau-t,0 \right)} \right)} ds
 = 2 \int_{\tau-t}^\tau {a_\varepsilon  \left( {G_{2,\varepsilon} (s,\cdot),v^\varepsilon_2  \left( s,\tau-t,0 \right)} \right)} ds \\
 & \le \frac{\gamma }{2}\int_{\tau-t}^\tau {a_\varepsilon  \left( {v^\varepsilon_2  \left( s,\tau-t,0 \right),v^\varepsilon_2
   \left( s \right)} \right)} ds + \frac{2}{\gamma } \int_{\tau-t}^\tau {a_\varepsilon
    \left( {G_{2,\varepsilon }(s,\cdot) ,G_{2,\varepsilon }(s,\cdot) } \right)} ds.
 \end{split}
\end{align}
For the  third term on the right-hand side of (\ref{tu1}), by
the similar arguments as that in Lemma 3.3 in \cite{li2017limiting} we have
\begin{align}\label{tu4}
\begin{split}
 &\sum\limits_{k = 1}^\infty  {\int_{\tau-t}^\tau {a_\varepsilon
  \left( {\sigma _{2,k,\varepsilon }
 + \varpi _{2,k} {v^\varepsilon_2\left( s,\tau-t,0 \right)},
 \sigma _{2,k,\varepsilon }
 + \varpi_{2,k} {v^\varepsilon_2\left( s,\tau-t,0 \right)}} \right)ds} }  \\
 & \le 2 \sum\limits_{k = 1}^\infty
   {\int_{\tau-t}^\tau{\left( {a_\varepsilon
  \left(
   {\varpi_{2,k} {v^\varepsilon_2\left( s,\tau-t,0 \right)}
   ,\varpi _{2,k} {v^\varepsilon_2\left( s,\tau-t,0 \right)}} \right)
   +  a_\varepsilon  \left( {\sigma _{2,k,\varepsilon }
    ,\sigma _{2,k,\varepsilon } } \right)}
    \right )
    ds} }  \\
  &\le 2 \sum\limits_{k = 1}^\infty
   {  \varpi_{2,k} ^2 } \int_{\tau-t}^\tau {a_\varepsilon
    \left( {v^\varepsilon_2\left( s,\tau-t,0 \right),
    v^\varepsilon_2\left( s,\tau-t,0 \right)} \right)ds}  +
    \overline g |\calo|
    \sum\limits_{k = 1}^\infty
     {\left\| {\sigma_{2,k}^{*}} \right\|_{L^\infty  \left( {\widetilde \calo} \right)}^2 }t.
 \end{split}
\end{align}

By (\ref{tu1})-(\ref{tu4})   we get
\begin{align}\label{tu5}
\begin{split}
 &\E\left(a_\varepsilon  \left( {v^\varepsilon_2\left( \tau,\tau-t,0 \right),v^\varepsilon_2\left( \tau,\tau-t,0 \right)} \right)\right)\\
  &\quad  + (\gamma-2 \sum\limits_{k = 1}^\infty
   {  \varpi_{2,k} ^2 })\E\left( \int_{\tau-t}^\tau {a_\varepsilon
    \left( {v^\varepsilon_2\left( s,\tau-t,0 \right),v^\varepsilon_2\left( s,\tau-t,0 \right)}
     \right)  ds}\right)  \\
 & \leq \frac{2}{\gamma }\beta ^2 \int_{\tau-t}^\tau\E\left( {a_\varepsilon
  \left( {u^\varepsilon  \left( s,\tau-t,\xi^\varepsilon \right),u^\varepsilon  \left( s,\tau-t,\xi^\varepsilon \right)} \right)}\right) ds
  +\frac{2}{\gamma } \int_{\tau-t}^\tau {a_\varepsilon
    \left( {G_{2,\varepsilon }(s,\cdot) ,G_{2,\varepsilon }(s,\cdot) } \right)} ds\\
    &\quad+    \overline g |\calo|
    \sum\limits_{k = 1}^\infty
     {\left\| {\sigma_{2,k}^{*}} \right\|_{L^\infty  \left( {\widetilde \calo} \right)}^2t }.
 \end{split}
\end{align}
By Gronwall inequality, we have
\begin{align}
\begin{split}
 &\E\left(a_\varepsilon  \left( {v^\varepsilon_2
 \left( \tau,\tau-t,0 \right),v^\varepsilon_2\left( \tau,\tau-t,0 \right)} \right)\right)\\
 & \leq \frac{2}{\gamma }\beta ^2 \int_{\tau-t}^\tau {e^{-(\gamma-2 \sum\limits_{k = 1}^\infty
   {  \varpi_{2,k} ^2 })(\tau-s)}\E\left(a_\varepsilon
  \left( {u^\varepsilon  \left( s,\tau-t,\xi^\varepsilon \right),
  u^\varepsilon  \left( s,\tau-t,\xi^\varepsilon \right)} \right)\right)} ds\\
 &\quad +c_1e^{-(\gamma-2 \sum\limits_{k = 1}^\infty
   {  \varpi_{2,k} ^2 })\tau} \int_{-\infty}^\tau {e^{(\gamma-2 \sum\limits_{k = 1}^\infty
   {  \varpi_{2,k} ^2 })s}\|\nabla G(s,\cdot)\|^2_{L^\infty(\calo)}} ds\\
    &\quad+  \overline g |\calo|
    \sum\limits_{k = 1}^\infty
     {\left\| {\sigma_{2,k}^{*}} \right\|_{L^\infty  \left( {\widetilde \calo} \right)}^2 }
     e^{-(\gamma-2 \sum\limits_{k = 1}^\infty
   {  \varpi_{2,k} ^2 })\tau}
      \int_{-\infty}^\tau e^{(\gamma-2 \sum\limits_{k = 1}^\infty
   {  \varpi_{2,k} ^2 })s} ds,
 \end{split}
\end{align}
which together with \eqref{ee} in Lemma \ref{Tsb2} completes the proof.
\end{proof}

\begin{lem}\label{eiv1}
  Suppose \eqref{cs1}-\eqref{La} and \eqref{acg}  hold.
  Then for every $\tau\in \R$,   $T>0$, $\xi^\varepsilon  \in
L^2_{\mathcal{F}_\tau}(\Omega,   \mathbb L^2(\mathcal {O}))$,  $0<\varepsilon < \varepsilon_0$
 and $t\in [\tau,\tau+T]$,
\begin{align*}
\begin{split}
&\int_{\tau}^t {\E\left( {\left\| {u^\varepsilon  \left(s, \tau,\xi^\varepsilon \right)}
 \right\|_{H_\varepsilon ^1 \left( \calo \right)}^2+\|v^\varepsilon(s, \tau,\xi^\varepsilon)\|^2_{L^2(\calo)} } \right)} ds \le
  M_5 \E(\left\| {\xi^\varepsilon} \right\|_{\mathbb H_g(\mathcal O)}^2)\\
 &\quad+M_5 \int_{ \tau }^{\tau+T}
 \left( {1 + \left\| {G _1 \left( { s,\cdot } \right)} \right\|_{L^\infty( { {\widetilde {\mathcal O}} } )}^2 }
 +\left\| {G_2\left( { s,\cdot} \right)} \right\|_{L^\infty( { {\widetilde {\mathcal O}} } )}^2
 \right)ds,
 \end{split}
\end{align*}
where $M_5$ is a positive constant
independent of
 $\tau$, $T$ and
    $\varepsilon$.
\end{lem}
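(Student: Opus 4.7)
The plan is to mirror the derivation of Lemma~\ref{Tsb2}, but to stop short of the pullback limit. Since the statement concerns the finite interval $[\tau,\tau+T]$ and asks only for a bound in terms of $\E\|\xi^\varepsilon\|^2$ rather than an attracting family $D_1$, the initial-data contribution need not be forced to decay; instead it will appear explicitly on the right-hand side, multiplied by a constant that depends on $T$ but is independent of $\varepsilon$.

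First I would apply It\^o's formula to $s\mapsto \beta\|u^\varepsilon(s)\|_{H_g(\calo)}^2 + \alpha\|v^\varepsilon(s)\|_{H_g(\calo)}^2$, reproducing identity~\eqref{sb1}. Then I would reuse the pointwise estimates \eqref{sb2}--\eqref{sb4} verbatim (these depend only on \eqref{c3.1}, \eqref{cv1}, \eqref{cs1} and Young's inequality with parameter $\lambda$), yielding exactly the differential inequality \eqref{sb5}. At this stage, rather than multiplying by $e^{\zeta s}$ and integrating over $(\tau-t,\tau)$ as in Lemma~\ref{Tsb2}, I would integrate \eqref{sb5} directly on $[\tau,t]$ for $t\in[\tau,\tau+T]$ and drop the nonnegative boundary term $\E(\beta\|u^\varepsilon(t)\|_{H_g}^2+\alpha\|v^\varepsilon(t)\|_{H_g}^2)\ge 0$ on the left, obtaining
\begin{equation*}
\int_\tau^t \E\bigl(a_\varepsilon(u^\varepsilon,u^\varepsilon) + \|u^\varepsilon\|_{H_g(\calo)}^2 + \|v^\varepsilon\|_{H_g(\calo)}^2\bigr)\, ds \le c_1\,\E\|\xi^\varepsilon\|_{\mathbb H_g(\calo)}^2 + c_1\!\int_\tau^{\tau+T}\!\!\bigl(1 + \|G_1(s,\cdot)\|_{L^\infty(\widetilde\calo)}^2 + \|G_2(s,\cdot)\|_{L^\infty(\widetilde\calo)}^2\bigr) ds,
\end{equation*}
where $c_1$ depends on $T$, $\beta$, $\alpha$, $\zeta$, $\lambda$, $\underline g$, $\overline g$ but not on $\varepsilon$.

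The final step is to convert the left-hand side into $\|u^\varepsilon\|_{H_\varepsilon^1}^2$ and $\|v^\varepsilon\|_{L^2}^2$. For this I would invoke \eqref{4.1}, which gives $\eta_1\|u^\varepsilon\|_{H_\varepsilon^1(\calo)}^2 \le a_\varepsilon(u^\varepsilon,u^\varepsilon) + \|u^\varepsilon\|_{L^2(\calo)}^2$ with $\eta_1$ independent of $\varepsilon\in(0,\varepsilon_0)$, together with the equivalence between $L^2(\calo)$ and $H_g(\calo)$ provided by \eqref{a1}. Choosing $M_5$ to be the maximum of the constants produced by these equivalences completes the argument. There is essentially no obstacle beyond careful bookkeeping; the only point requiring attention is uniformity in $\varepsilon$, which is automatic, since the constants in \eqref{sb2}--\eqref{sb4} involve only $\underline g$, $\overline g$ and the $\varepsilon$-independent bounds on $f$, $\sigma_{i,k}$, $\varpi_{i,k}$, while $\eta_1,\eta_2$ in \eqref{4.1} are chosen uniformly over $\varepsilon\in(0,\varepsilon_0)$.
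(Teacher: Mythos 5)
Your proposal is correct and follows essentially the same route as the paper, whose proof of this lemma is precisely ``integrate \eqref{sb5} on $(\tau,t)$,'' drop the nonnegative terminal term, and pass to the $\mathbb H_\varepsilon^1$ and $L^2$ norms via \eqref{4.1} and \eqref{a1}. One minor remark: your intermediate constant $c_1$ need not depend on $T$ at all, since the contribution of the constant term in \eqref{sb5} over $[\tau,t]$ is already absorbed by the summand $1$ inside $\int_\tau^{\tau+T}(1+\cdots)\,ds$, which is what makes $M_5$ independent of $T$ as the statement requires.
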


\begin{proof}
By integrating \eqref{sb5} on $(\tau,t)$,  the desired inequality
follows.
\end{proof}

\begin{lem}\label{eivv1}
  Suppose \eqref{cs1}-\eqref{La} and \eqref{acg}  hold.
  Then for every $\tau\in \R$,   $T>0$, $\xi^\varepsilon  \in
L^2_{\mathcal{F}_\tau}(\Omega,   \mathbb H^1(\mathcal {O}))$,  $0<\varepsilon < \varepsilon_0$
 and $t\in [\tau,\tau+T]$,
\begin{align*}
\begin{split}
&\int_{\tau}^t {\E\left( {\left\| {v^\varepsilon  \left(s, \tau,\xi^\varepsilon \right)}
 \right\|_{H_\varepsilon ^1 \left( \calo \right)}^2 } \right)} ds \le
 M_6 \E(\left\| {\xi^\varepsilon} \right\|_{\mathbb H^1_\varepsilon(\mathcal O)}^2)\\
 &\quad+M_6 \int_{ \tau }^{\tau+T}
 \left( {1 + \left\| {G _1 \left( { s,\cdot } \right)} \right\|_{L^\infty( { {\widetilde {\mathcal O}} } )}^2 }
 +\left\| {G_2\left( { s,\cdot} \right)} \right\|_{H^{1,\infty}( { {\widetilde {\mathcal O}} } )}^2
 \right)ds,
 \end{split}
\end{align*}
where $M_6$ is a positive constant
independent of
 $\tau$, $T$ and
    $\varepsilon$.
\end{lem}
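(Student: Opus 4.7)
The plan is to derive the $\mathbb{H}^1_\varepsilon(\calo)$ bound on $v^\varepsilon$ by applying Ito's formula to the bilinear form $a_\varepsilon(v^\varepsilon, v^\varepsilon)$, mirroring the approach used in Lemmas \ref{Tsb4} and \ref{aTsb4}, but now on the finite interval $[\tau,\tau+T]$ with initial data in $\mathbb{H}^1_\varepsilon(\calo)$. By \eqref{4.1} it suffices to control $\int_\tau^t \E[a_\varepsilon(v^\varepsilon,v^\varepsilon)]\,ds$ and $\int_\tau^t \E\|v^\varepsilon\|^2_{L^2(\calo)}\,ds$ separately; the $L^2$ integral follows immediately from Lemma \ref{eiv1}, which also delivers the needed bound on $\int_\tau^t \E[a_\varepsilon(u^\varepsilon,u^\varepsilon)]\,ds$ via \eqref{4.1}.

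For the $a_\varepsilon$ piece, applying Ito's formula to $a_\varepsilon(v^\varepsilon,v^\varepsilon)$ using the equation for $v^\varepsilon$ in \eqref{aeu3} yields
\[
d[a_\varepsilon(v^\varepsilon,v^\varepsilon)] = 2a_\varepsilon\!\left(v^\varepsilon,\,-\gamma v^\varepsilon+\beta u^\varepsilon+G_{2,\varepsilon}\right)dt + \sum_{k=1}^\infty a_\varepsilon\!\left(\sigma_{2,k,\varepsilon}+\varpi_{2,k}v^\varepsilon,\sigma_{2,k,\varepsilon}+\varpi_{2,k}v^\varepsilon\right)dt + dM_t,
\]
where $dM_t$ collects the martingale increments. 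Taking expectation, bounding $2\beta\,a_\varepsilon(v^\varepsilon,u^\varepsilon)$ and $2a_\varepsilon(v^\varepsilon,G_{2,\varepsilon})$ by Young's inequality with small parameters, and using
\[
\sum_k a_\varepsilon\!\left(\sigma_{2,k,\varepsilon}+\varpi_{2,k}v^\varepsilon,\sigma_{2,k,\varepsilon}+\varpi_{2,k}v^\varepsilon\right)\le 2\sum_k a_\varepsilon(\sigma_{2,k,\varepsilon},\sigma_{2,k,\varepsilon}) + 2\!\left(\sum_k\varpi_{2,k}^2\right)\!a_\varepsilon(v^\varepsilon,v^\varepsilon),
\]
one obtains, thanks to the gap $2\gamma-2\sum_k \varpi_{2,k}^2>0$ guaranteed by \eqref{La1}, a differential inequality
\[
\tfrac{d}{dt}\E[a_\varepsilon(v^\varepsilon,v^\varepsilon)] + c_1 \E[a_\varepsilon(v^\varepsilon,v^\varepsilon)] \le c_2 \E[a_\varepsilon(u^\varepsilon,u^\varepsilon)] + c_2 a_\varepsilon(G_{2,\varepsilon},G_{2,\varepsilon}) + c_2\sum_k a_\varepsilon(\sigma_{2,k,\varepsilon},\sigma_{2,k,\varepsilon}).
\]
Integrating on $[\tau,t]$ against the coercivity $c_1>0$ and invoking Lemma \ref{eiv1} for the $u^\varepsilon$ term then produces the claimed bound, with the $\E\|\xi^\varepsilon\|^2_{\mathbb{H}^1_\varepsilon(\calo)}$ contribution coming from $a_\varepsilon(\xi_2^\varepsilon,\xi_2^\varepsilon)\le\eta_2\|\xi_2^\varepsilon\|^2_{H^1_\varepsilon(\calo)}$.

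The main obstacle is the $\varepsilon$-uniformity of the forcing and noise contributions in $a_\varepsilon(\cdot,\cdot)$, since the $H^1_\varepsilon(\calo)$-norm has a $\frac{1}{\varepsilon^2}$-weight in the $y_{n+1}$-direction. This is resolved by exploiting the thin-domain scaling: from $G_{2,\varepsilon}(y^\ast,y_{n+1})=G_2(y^\ast,\varepsilon g(y^\ast)y_{n+1})$ one computes $\partial_{y_{n+1}}G_{2,\varepsilon}=\varepsilon g(y^\ast)(\partial_{x_{n+1}}G_2)(\cdot)$, so that the distinguished component $\frac{1}{\varepsilon g}\partial_{y_{n+1}}G_{2,\varepsilon}$ appearing in $J^\ast\nabla G_{2,\varepsilon}$ is simply $(\partial_{x_{n+1}}G_2)(\cdot)$, which is bounded in $L^\infty$ by $\|G_2\|_{H^{1,\infty}(\widetilde\calo)}$ uniformly in $\varepsilon$; the tangential derivatives are controlled analogously. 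This yields $a_\varepsilon(G_{2,\varepsilon},G_{2,\varepsilon})\le c_3\|G_2(s,\cdot)\|^2_{H^{1,\infty}(\widetilde\calo)}$, and the assumption \eqref{cs2} together with $\sum_k\|\sigma^\ast_{2,k}\|^2_{L^\infty(\widetilde\calo)}<\infty$ supplies the matching $\varepsilon$-uniform bound for the stochastic integrals. Combining all of the above with Lemma \ref{eiv1} for the $\|v^\varepsilon\|^2_{L^2(\calo)}$ contribution finishes the proof.
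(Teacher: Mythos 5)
Your proposal is correct and follows essentially the same route as the paper: Ito's formula applied to $a_\varepsilon(v^\varepsilon,v^\varepsilon)$, Young's inequality on the cross terms with $u^\varepsilon$ and $G_{2,\varepsilon}$, absorption of the noise term using $\gamma-2\sum_k\varpi_{2,k}^2>0$, and then Lemma \ref{eiv1} to control both $\int_\tau^t\E[a_\varepsilon(u^\varepsilon,u^\varepsilon)]\,ds$ and the $L^2$ part of the $H^1_\varepsilon$-norm of $v^\varepsilon$. In fact you make explicit the $\varepsilon$-uniformity of $a_\varepsilon(G_{2,\varepsilon},G_{2,\varepsilon})$ via the thin-domain scaling of $\partial_{y_{n+1}}G_{2,\varepsilon}$, a point the paper only gestures at by citing the analogous estimates in Lemma \ref{aTsb4}.
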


\begin{proof}
By Ito's formula and \eqref{aeu4}, we get for $t\geq \tau$,
\begin{align}\label{tuv1}
\begin{split}
 &a_\varepsilon  \left( {v^\varepsilon\left( t \right),v^\varepsilon\left( t \right)} \right)
    + 2\gamma \int_\varsigma^\tau {a_\varepsilon
     \left( {v^\varepsilon\left( s \right),v^\varepsilon\left( s \right)}
     \right)  ds}  \\
 & =a_\varepsilon  \left( {v^\varepsilon\left( \tau \right),v^\varepsilon\left( \tau \right)} \right)
 +2\beta\int_\varsigma^\tau {\left( {u^\varepsilon(s) ,A_\varepsilon
    v^\varepsilon\left( s \right)} \right)_{H_g \left( \calo \right)} ds}
     + 2\int_\varsigma^\tau {\left( {G_{2,\varepsilon}(s,\cdot) ,A_\varepsilon
    v^\varepsilon\left( s\right)} \right)_{H_g \left( \calo \right)} ds}   \\
 &\quad + \sum\limits_{k = 1}^\infty  {\int_\varsigma^\tau {a_\varepsilon
   \left( {\sigma _{2,k,\varepsilon }
  +\varpi_{2,k} {v^\varepsilon\left( s \right)},\sigma _{2,k,\varepsilon }
   + \varpi _{2,k} {v^\varepsilon\left( s \right)}} \right)} } ds \\
  &\quad+ 2\sum\limits_{k = 1}^\infty
   {\int_\varsigma^\tau {\left( {\sigma _{2,k,\varepsilon }
    + \varpi _{2,k} {v^\varepsilon\left( s \right)},A_\varepsilon
   v^\varepsilon\left( s \right)} \right)_{H_g \left( \calo \right)}} } dW_k \left( s \right).
 \end{split}
\end{align}
Based on estimates similar to those in Lemma \ref{aTsb4}, we can infer that
\begin{align}\label{aatu5}
\begin{split}
 &\E\left(a_\varepsilon  \left( {v^\varepsilon\left( t \right),v^\varepsilon\left( t \right)} \right)\right)
   + (\gamma-2 \sum\limits_{k = 1}^\infty
   {  \varpi_{2,k} ^2 })\E\left( \int_{\tau}^t {a_\varepsilon
   \left( {v^\varepsilon\left( s\right),v^\varepsilon\left( s \right)}
     \right)  ds}\right)  \\
 & \leq \E\left(a_\varepsilon  \left( {v^\varepsilon\left( \tau \right),v^\varepsilon\left( \tau \right)} \right)\right)+
  \frac{2}{\gamma }\beta ^2 \int_{\tau}^t {a_\varepsilon
  \left( {u^\varepsilon  \left( s \right),u^\varepsilon  \left( s \right)} \right)} ds
  +\frac{2}{\gamma } \int_{\tau}^t {a_\varepsilon
    \left( {G_{2,\varepsilon }(s,\cdot) ,G_{2,\varepsilon }(s,\cdot) } \right)} ds\\
    &\quad+    \overline g |\calo|
    \sum\limits_{k = 1}^\infty
     {\left\| {\sigma_{2,k}^{*}} \right\|_{L^\infty  \left( {\widetilde \calo} \right)}^2 }(t-\tau),
 \end{split}
\end{align}
which together with  Lemma \ref{eiv1} completes the proof.

\end{proof}

The following lemmas
is concerned with the
uniform estimates of
solutions of \eqref{aeu4}
which is
similar to Lemma \ref{eiv1} and \ref{eivv1}.

\begin{lem}\label{eiv2}
  Suppose \eqref{cs1}-\eqref{La} and \eqref{acg} hold.
    Then for every $\tau\in \R$,   $T>0$ and $\xi^0  \in
L^2_{\mathcal{F}_\tau}(\Omega,   \mathbb L^2(\mathcal {Q}))$
and  $t\in [\tau,\tau+T]$,
\begin{align*}
\begin{split}
&\int_{\tau}^t {\E\left( {\left\| {u^0  \left(s, \tau,\xi^0 \right)}
 \right\|_{H ^1 \left( \mathcal Q \right)}^2 } \right)} ds \le M_7 \E(\left\| {\xi^0} \right\|_{\mathbb H_g(\mathcal Q)}^2)\\
 &\quad+M_7 \int_{ \tau }^{\tau+T}
 \left( {1 + \left\| {G _1 \left( { s,\cdot } \right)} \right\|_{L^\infty( { {\widetilde {\mathcal O}} } )}^2 }
 +\left\| {G_2\left( { s,\cdot} \right)} \right\|_{L^\infty( { {\widetilde {\mathcal O}} } )}^2
 \right)ds ,
 \end{split}
\end{align*}
where $M_7$ is a positive constant
independent of
   $\tau$ and     $T$.
\end{lem}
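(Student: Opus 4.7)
The plan is to mirror the argument for Lemma \ref{eiv1}, but now on the fixed $n$-dimensional domain $\mathcal{Q}$ with the operator $A_0$ in place of $A_\varepsilon$. Since the conclusion only involves $\E\int_\tau^t\|u^0(s)\|_{H^1(\mathcal{Q})}^2\,ds$, and the equivalence $a_0(u,u) + \|u\|_{L^2(\mathcal{Q})}^2 \sim \|u\|_{H^1(\mathcal{Q})}^2$ (consequence of \eqref{a1}) is in force, it suffices to bound $\E\int_\tau^t a_0(u^0(s),u^0(s))\,ds$ and $\E\int_\tau^t \|u^0(s)\|_{H_g(\mathcal{Q})}^2\,ds$.

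First, I would apply Itô's formula to the functional
\[
\Phi(w^0) = \beta\|u^0\|_{H_g(\mathcal{Q})}^2 + \alpha\|v^0\|_{H_g(\mathcal{Q})}^2
\]
along the solution of \eqref{aeu4}. This yields, after taking expectations and exploiting the identity $\left(A_0 u, v\right)_{H_g(\mathcal{Q})} = a_0(u,v)$ via \eqref{aae4.1}-type relation, a differential inequality of the form
\[
\tfrac{d}{dt}\E\Phi(w^0) + 2\beta\,\E\,a_0(u^0,u^0) + 2\beta\lambda\,\E\|u^0\|_{H_g(\mathcal{Q})}^2 + 2\alpha\gamma\,\E\|v^0\|_{H_g(\mathcal{Q})}^2 \le (\text{bounded terms}).
\]
Crucially, the cross-coupling contributions $2\beta\alpha(v^0,u^0)_{H_g(\mathcal{Q})} - 2\alpha\beta(u^0,v^0)_{H_g(\mathcal{Q})}$ cancel identically, which is the whole reason the weights $\beta$ and $\alpha$ were chosen for the two slots.

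Next, I would estimate each term on the right-hand side exactly as in the chain \eqref{sb2}-\eqref{sb4}, applied on $\mathcal{Q}$ instead of $\mathcal{O}$:
\begin{itemize}
\item the drift term $2\beta(f_0(\cdot,u^0),u^0)_{H_g(\mathcal{Q})}$ is controlled by \eqref{c3.1} and yields a negative $L^p(\mathcal{Q})$ contribution plus a bound involving $\|\psi_1\|_{L^\infty(\widetilde{\calo})}$;
\item the external forces $2\beta(G_{1,0},u^0) + 2\alpha(G_{2,0},v^0)$ are handled by Young's inequality, producing $\tfrac{1}{\lambda}\overline{g}|\mathcal{Q}|(\beta\|G_1(t,\cdot)\|_{L^\infty(\widetilde{\calo})}^2 + \alpha\|G_2(t,\cdot)\|_{L^\infty(\widetilde{\calo})}^2)$ and terms absorbable by the dissipation;
\item the noise quadratic variation, using \eqref{cs1} and \eqref{cv1}, generates constants plus a coefficient $\sum_k(4\gamma_k^2 + 2\varpi_{2,k}^2)$ multiplying $\Phi(w^0)$, which is strictly dominated by $\lambda$ thanks to \eqref{La1}.
\end{itemize}

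Finally, integrating the resulting differential inequality over $(\tau,t)\subseteq(\tau,\tau+T)$, dropping the nonnegative $L^p$-term on the left, and using the equivalence of $a_0(u^0,u^0)+\|u^0\|_{L^2(\mathcal{Q})}^2$ with $\|u^0\|_{H^1(\mathcal{Q})}^2$, produces the claimed estimate with $M_7$ an absolute constant depending only on the structural parameters ($\alpha,\beta,\gamma,\lambda,\underline{g},\overline{g},|\mathcal{Q}|$, and the $\ell^2$-norms of the noise coefficients), and in particular independent of $\tau$ and $T$. There is no genuine obstacle: the only subtlety is to check that the cross terms between $u^0$ and $v^0$ cancel and that \eqref{La1} leaves strictly positive dissipation after absorbing the noise contribution; both steps are already exhibited in the proofs of Lemmas \ref{Tsb2} and \ref{eiv1} and transcribe verbatim to the $\mathcal{Q}$-setting.
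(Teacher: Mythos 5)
Your proposal is correct and follows exactly the route the paper intends: Lemma \ref{eiv2} is stated without proof as the $\mathcal Q$-analogue of Lemma \ref{eiv1}, whose proof is precisely to integrate the differential inequality \eqref{sb5} (obtained from It\^o's formula applied to $\beta\|u\|_{H_g}^2+\alpha\|v\|_{H_g}^2$, with the cross terms cancelling and the noise contribution absorbed via \eqref{La1}) over $(\tau,t)$ and invoke the norm equivalence coming from \eqref{a1}. No gaps.
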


\begin{lem}\label{eivv10}
  Suppose \eqref{cs1}-\eqref{La} and \eqref{acg}  hold.
  Then for every $\tau\in \R$,   $T>0$, $\xi^0  \in
L^2_{\mathcal{F}_\tau}(\Omega,   \mathbb H^1(\mathcal {Q}))$
 and $t\in [\tau,\tau+T]$,
\begin{align*}
\begin{split}
&\int_{\tau}^t {\E\left( {\left\| {v^0  \left(s, \tau,\xi^0 \right)}
 \right\|_{H ^1 \left( \mathcal Q \right)}^2 } \right)} ds \le
 M_8 \E(\left\| {\xi^0} \right\|_{\mathbb H^1(\mathcal Q)}^2)\\
 &\quad+M_8 \int_{ \tau }^{\tau+T}
 \left( {1 + \left\| {G _1 \left( { s,\cdot } \right)} \right\|_{L^\infty( { {\widetilde {\mathcal O}} } )}^2 }
 +\left\| {G_2\left( { s,\cdot} \right)} \right\|_{H^{1,\infty}( { {\widetilde {\mathcal O}} } )}^2
 \right)ds,
 \end{split}
\end{align*}
where $M_8$ is a positive constant
independent of $\tau$ and $T$.
\end{lem}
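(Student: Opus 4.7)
The plan is to mirror the argument for Lemma \ref{eivv1}, transferring it from the reformulated system \eqref{aeu3} on $\mathcal O$ to the limiting system \eqref{aeu4} on $\mathcal Q$. The bilinear form $a_\varepsilon$ is replaced by $a_0(u,v)=\int_{\mathcal Q} g\nabla u\cdot\nabla v\,dy^*$, and all computations in the $\varepsilon$-scaled norm $H^1_\varepsilon(\mathcal O)$ collapse to the standard $H^1(\mathcal Q)$-norm (which is equivalent to the energy norm $a_0(\cdot,\cdot)+\|\cdot\|_{L^2}^2$ since $g$ is bounded between $\underline{g}$ and $\overline{g}$).

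The first step is to apply It\^o's formula in $H_g(\mathcal Q)$ to the functional $v\mapsto a_0(v,v)$ along the solution $v^0$ of the second equation of \eqref{aeu4}. Using $a_0(v^0,A_0 v^0)=(A_0 v^0,A_0 v^0)_{H_g(\mathcal Q)}$ formally (or equivalently integrating by parts via the boundary condition $\partial v^0/\partial \nu_0=0$), I would obtain
\begin{align*}
& a_0(v^0(t),v^0(t)) + 2\gamma\int_\tau^t a_0(v^0(s),v^0(s))\,ds \\
&\quad = a_0(\xi_2^0,\xi_2^0) + 2\beta\int_\tau^t (u^0(s),A_0 v^0(s))_{H_g(\mathcal Q)}\,ds + 2\int_\tau^t (G_{2,0}(s,\cdot),A_0 v^0(s))_{H_g(\mathcal Q)}\,ds \\
&\quad\quad + \sum_{k=1}^\infty \int_\tau^t a_0(\sigma_{2,k,0}+\varpi_{2,k} v^0(s),\sigma_{2,k,0}+\varpi_{2,k} v^0(s))\,ds + M_t,
\end{align*}
where $M_t$ is a local martingale whose expectation vanishes.

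The second step is to bound the three deterministic forcing-type terms exactly as in the proof of Lemma \ref{eivv1}. By Cauchy--Schwarz on $a_0$,
\[
2\beta(u^0,A_0 v^0)_{H_g(\mathcal Q)}=2\beta a_0(u^0,v^0)\le \tfrac{\gamma}{4}a_0(v^0,v^0)+\tfrac{4\beta^2}{\gamma}a_0(u^0,u^0),
\]
and similarly $2(G_{2,0},A_0 v^0)_{H_g(\mathcal Q)}=2a_0(G_{2,0},v^0)\le \tfrac{\gamma}{4}a_0(v^0,v^0)+\tfrac{4}{\gamma}a_0(G_{2,0},G_{2,0})$; note $a_0(G_{2,0},G_{2,0})$ is controlled by $\overline{g}|\mathcal Q|\,\|G_2(s,\cdot)\|_{H^{1,\infty}(\widetilde{\mathcal O})}^2$, which is precisely why $G_2$ is assumed in $H^{1,\infty}$ rather than merely $L^\infty$. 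The noise correction is split via the inequality $(a+b)^2\le 2a^2+2b^2$ into a constant piece bounded by $\overline{g}|\mathcal Q|\sum_k\|\sigma_{2,k}^*\|_{L^\infty}^2$ and a piece $2\sum_k\varpi_{2,k}^2\,a_0(v^0,v^0)$, which is absorbed into the $2\gamma\int a_0(v^0,v^0)$ term on the left thanks to \eqref{La1}.

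The third step is to take expectation, cancel the absorbed $v^0$-terms on the left, leaving a positive coefficient in front of $\int_\tau^t \E(a_0(v^0,v^0))\,ds$, and then invoke Lemma \ref{eiv2} to bound the only remaining unknown integral $\int_\tau^t \E(a_0(u^0,u^0))\,ds$ by $\E\|\xi^0\|_{\mathbb H_g(\mathcal Q)}^2+\int_\tau^{\tau+T}(1+\|G_1(s,\cdot)\|_{L^\infty}^2+\|G_2(s,\cdot)\|_{L^\infty}^2)\,ds$. Combining with the initial-data term $a_0(\xi_2^0,\xi_2^0)\le \overline{g}\|\xi^0\|_{\mathbb H^1(\mathcal Q)}^2$ and using the equivalence of $a_0+\|\cdot\|_{L^2}^2$ with $\|\cdot\|_{H^1(\mathcal Q)}^2$ yields the desired estimate. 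The only real subtlety is the justification of It\^o's formula applied to $a_0(v^0,v^0)$ when $v^0$ does not a priori lie in $D(A_0)$; this is handled by a standard Galerkin/Yosida regularization, using the fact that $\xi^0\in\mathbb H^1(\mathcal Q)$ and the structure of the equation guarantee $v^0\in L^2_{\mathrm{loc}}([\tau,\infty);H^2(\mathcal Q))\cap C([\tau,\infty);H^1(\mathcal Q))$ almost surely. I expect this regularity justification, together with verifying that $v^0$ solves the equation strongly enough to run It\^o in the $H^1$-norm, to be the main (though routine) obstacle; all other estimates are direct analogues of those already performed in Lemma \ref{eivv1}.
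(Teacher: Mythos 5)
Your proposal is correct and follows essentially the same route as the paper: the paper gives no separate proof of this lemma, stating only that it is obtained like Lemmas \ref{eiv1} and \ref{eivv1}, and your argument is precisely the transcription of the proof of Lemma \ref{eivv1} to the limiting system — It\^o's formula for $a_0(v^0,v^0)$, Cauchy--Schwarz on the $a_0(u^0,v^0)$ and $a_0(G_{2,0},v^0)$ cross terms, absorption of the $2\sum_k\varpi_{2,k}^2\,a_0(v^0,v^0)$ noise contribution, and an appeal to Lemma \ref{eiv2} for the remaining $u^0$ integral. Your added remark on justifying It\^o's formula via Galerkin/Yosida regularization addresses a point the paper silently glosses over and is a reasonable refinement rather than a deviation.
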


\section{The Proof of Existence of Pullback Measure  Attractors}
\setcounter{equation}{0}

In this section, we prove the
  existence and uniqueness  of $\mathcal D_1$-pullback measure attractors
of \eqref{aeu3} in $\mathcal P_2(\mathbb L^2(\calo))$
and $\mathcal D_0$-pullback measure attractors of \eqref{aeu4} in
 $\mathcal P_2(\mathbb L^2(\mathcal Q))$.

By Lemma \ref{Tsb2}, we
obtain a $\mathcal D_1$-pullback absorbing set
 for $S^\varepsilon$
 as stated below.

\begin{lem}\label{Lms}
Suppose \eqref{cs1}-\eqref{La} and \eqref{acg} hold.
Given $\tau\in \R$, denote by
\be\label{Lms a}
K(\tau)
=   B_{
	{\mathcal{P}}_2
	(\mathbb L^2(\mathcal O))} \left( {L^{\frac 12} _1(\tau) } \right),
\ee
 where
\begin{align}\label{Lms b}
	\begin{split}
		L_1(\tau)=
		{M_1}
		+  {M_1}
		\int_{-\infty}^\tau e^{ - \zeta \left( {\tau - s} \right)}
		\left (
		\left\|  {G_1(s,\cdot)} \right\|_{L^\infty( { {\widetilde {\mathcal O}} } )}^2
		+
		\left\|  {G_2(s,\cdot)} \right\|^2_{L^\infty( { {\widetilde {\mathcal O}} } )}
		\right ) ds,
	\end{split}
\end{align}
and $M_1>0$ is the same constant
as in Lemma \ref{Tsb2},
  independent of $\tau$
and $\varepsilon$.
Then
$K=\{K(\tau):
\ \tau\in \R \}
\in
 \mathcal D_1$
 is a
closed  $\mathcal D_1$-pullback absorbing set
of  $S^\varepsilon$
for  $0<\varepsilon<\varepsilon_0$; that is, 
for every $\tau\in \R$ and $D_1=\{D_1(t):t\in \R\}\in \mathcal D_1$, there exists   $T=T(\tau,D_1)>0$,
		independent of $\varepsilon$, such that for  all
		$t\geq T$ and $0<\varepsilon<\varepsilon_0$,     
	\be
		S^\varepsilon\left( t,\tau-t \right)D_1(\tau-t)
		\subseteq K(\tau).
	\ee
\end{lem}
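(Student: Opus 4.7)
The plan is to derive the lemma directly from the uniform second-moment estimate of Lemma~\ref{Tsb2}, which already contains all the analytic work. Three things must be verified: that $K(\tau)$ is closed in $\mathcal P_2(\mathbb L^2(\calo))$, that the family $K$ belongs to the tempered collection $\mathcal D_1$, and that $K$ absorbs every element of $\mathcal D_1$ under $S^\varepsilon$ uniformly in $\varepsilon\in(0,\varepsilon_0)$.

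The closedness of $K(\tau)$ follows because $K(\tau)$ is a second-moment ball and the functional $\mu\mapsto\int\|z\|_{\mathbb L^2(\calo)}^2\,\mu(dz)$ is lower semicontinuous with respect to weak convergence, by a standard Fatou argument applied to the truncations $\|z\|^2\wedge N$. To see $K\in\mathcal D_1$, I would multiply $L_1(\tau)$ by $e^{\zeta\tau}$, push the factor inside the integral as $e^{\zeta s}$ under the exponential kernel, and invoke the standing integrability assumption \eqref{acg} to conclude $\lim_{\tau\to-\infty}e^{\zeta\tau}L_1(\tau)=0$.

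The absorption property is the core step. Fix $\tau\in\R$, $D_1\in\mathcal D_1$ and $\mu\in D_1(\tau-t)$. I would realize $\mu$ as the law of some $\mathcal F_{\tau-t}$-measurable random variable $\xi^\varepsilon\in L^2(\Omega;\mathbb L^2(\calo))$; by the definition of the dual operator in \eqref{sp}, $S^\varepsilon(t,\tau-t)\mu=p_*^\varepsilon(\tau,\tau-t)\mu$ equals the law of $(u^\varepsilon(\tau,\tau-t,\xi^\varepsilon),v^\varepsilon(\tau,\tau-t,\xi^\varepsilon))$, so
\[
\int_{\mathbb L^2(\calo)}\|z\|_{\mathbb L^2(\calo)}^2\,S^\varepsilon(t,\tau-t)\mu(dz)
=\E\|(u^\varepsilon(\tau,\tau-t,\xi^\varepsilon),v^\varepsilon(\tau,\tau-t,\xi^\varepsilon))\|_{\mathbb L^2(\calo)}^2.
\]
Lemma~\ref{Tsb2} then supplies a threshold $T=T(\tau,D_1)>0$ such that for $t\ge T$ the right-hand side is at most $L_1(\tau)$; hence $S^\varepsilon(t,\tau-t)\mu\in B_{\mathcal P_2(\mathbb L^2(\calo))}(L_1^{1/2}(\tau))=K(\tau)$.

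The only mildly delicate point I anticipate is ensuring that $T$ can be chosen uniformly in $\mu\in D_1(\tau-t)$ and in $\varepsilon\in(0,\varepsilon_0)$. This is, however, already built into Lemma~\ref{Tsb2}: the dependence of $T$ on $D_1$ enters only through the bound $\|D_1(\tau-t)\|_{\mathcal P_2(\mathbb L^2(\calo))}$, and the constant $M_1$ is $\varepsilon$-independent, so the same $T$ serves every $\mu\in D_1(\tau-t)$ and every admissible $\varepsilon$. No further new estimates are needed.
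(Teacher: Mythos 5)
Your proposal is correct and follows essentially the same route as the paper: absorption is read off directly from the uniform second-moment bound of Lemma \ref{Tsb2} (whose threshold $T$ and constant $M_1$ are already uniform in $\varepsilon$ and in $\mu\in D_1(\tau-t)$), and membership $K\in\mathcal D_1$ comes from multiplying $L_1(\tau)$ by $e^{\zeta\tau}$ and invoking \eqref{acg}. Your explicit verification that the second-moment ball is closed, via lower semicontinuity of $\mu\mapsto\int\|z\|^2\,\mu(dz)$ under weak convergence, is a small addition the paper leaves implicit, but it does not change the argument.
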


{\begin{proof}
		By \eqref{Lms a}  and
		 Lemma
		\ref{Tsb2}, we see that
		for every $\tau\in \R$ and $D_1=\{D_1(t):t\in \R\}\in \mathcal D_1$, there exists   $T=T(\tau,D_1)>0$,
		independent of $\varepsilon$, such that for  all
		$t\geq T$ and $0<\varepsilon<\varepsilon_0$,     $S^\varepsilon$    satisfies
	\be\label{Lms p1}
		S^\varepsilon\left( t,\tau-t \right)D_1(\tau-t)
		\subseteq K(\tau).
	\ee
	We now prove
	$K=\{K(\tau):
	\ \tau\in \R \}
	\in
	\mathcal D_1$.
By \eqref{Lms a}, \eqref{Lms b}
and \eqref{acg}   we have
	$$
	e^{\zeta \tau}\| K(\tau) \|^2
	_{{\mathcal{P}}_2
		(L^2(\mathcal O))}=
		e^{\zeta \tau}
		L_1(\tau)
	$$
	$$
	=
	{ {e^{\zeta \tau} M_1}}
	+  {  { M_1}}
	\int_{-\infty}^\tau e^{   \zeta  s }
	\left (
	\left\|  {G_1(s,\cdot)} \right\|_{L^\infty( { {\widetilde {\mathcal O}} } )}^2
	+
	\left\|  {G_2(s,\cdot)} \right\|^2_{L^\infty( { {\widetilde {\mathcal O}} } )}
	\right ) ds
	\to 0, \ \ \text{as } \tau\to -\infty,
	$$
	and hence
		$K=\{K(\tau):
	\ \tau\in \R \}
	\in
	\mathcal D_1$,
	which along with
	\eqref{Lms p1} concludes
	the proof.
	\end{proof}

Let $\chi^n=\chi_1^n+\chi_2^n$, for $n\in \N$, where
$\{\chi^n\}_{n=1}^\infty$, $\{\chi^n_1\}_{n=1}^\infty$
and $\{\chi^n_2\}_{n=1}^\infty$ are $X$-valued random sequence.

\begin{lem}\label{Lch}
Assume  $\{\mathcal L(\chi_1^n)\}_{n=1}^\infty$ are tight and $\E(\|\chi_2^n\|_X^2)$
convergence to zero. Then $\{\mathcal L(\chi^n)\}_{n=1}^\infty$ are also tight.
\end{lem}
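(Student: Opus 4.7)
The plan is to combine Chebyshev's inequality with Prokhorov's theorem and Slutsky's theorem, since the statement is a standard stability-of-tightness fact: perturbing a tight sequence of laws by a sequence that converges to zero in probability preserves tightness. First I would convert the $L^2$ hypothesis into convergence in probability: for every $\delta>0$, Chebyshev yields $P(\|\chi_2^n\|_X>\delta)\le \delta^{-2}\E(\|\chi_2^n\|_X^2)\to 0$, so $\chi_2^n\to 0$ in probability, and equivalently $\mathcal L(\chi_2^n)$ converges weakly to $\delta_0$ in $\mathcal P(X)$.

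Next, since $X$ is a separable Banach space and hence Polish, Prokhorov's theorem guarantees that a family of laws in $\mathcal P(X)$ is tight if and only if it is relatively compact in the weak topology. Therefore it is enough to show that every subsequence of $\{\mathcal L(\chi^n)\}$ admits a further weakly convergent sub-subsequence. Fix an arbitrary subsequence $\{n_k\}$. Applying tightness of $\{\mathcal L(\chi_1^n)\}$ together with Prokhorov's theorem, I extract a sub-subsequence $\{n_{k_j}\}$ and a measure $\mu^*\in \mathcal P(X)$ such that $\mathcal L(\chi_1^{n_{k_j}})$ converges weakly to $\mu^*$. Combining this with $\|\chi^{n_{k_j}}-\chi_1^{n_{k_j}}\|_X=\|\chi_2^{n_{k_j}}\|_X\to 0$ in probability, Slutsky's theorem in metric spaces (if $X_j$ converges weakly to $X$ and $d(X_j,Y_j)\to 0$ in probability, then $Y_j$ converges weakly to $X$) then yields $\mathcal L(\chi^{n_{k_j}})\to \mu^*$ weakly, which completes the relative compactness check and hence the tightness of $\{\mathcal L(\chi^n)\}$.

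The main conceptual obstacle is that the naive direct route fails in infinite dimensions: one might take the compact set $K_1$ coming from tightness of $\chi_1^n$ and try to use its closed $\delta$-neighborhood $K_1+\overline{B(0,\delta)}$ together with the bound $P(\chi^n\notin K_1+\overline{B(0,\delta)})\le P(\chi_1^n\notin K_1)+P(\|\chi_2^n\|_X>\delta)$, but the enlargement is bounded and not totally bounded in an infinite-dimensional Banach space, so it fails to be compact and cannot directly serve as a witness for tightness. The subsequence/Slutsky argument above circumvents this difficulty by never producing a single compact witness by hand, using instead the characterization of tightness via weak relative compactness on the Polish space $X$, which is available under the standing assumptions of Section 2.
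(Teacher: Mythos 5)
Your argument is correct and follows essentially the same route as the paper: extract a weakly convergent subsequence from the tight family $\{\mathcal L(\chi_1^n)\}$, transfer the weak limit to $\{\mathcal L(\chi^n)\}$ using the smallness of $\chi_2^n$ (the paper does this via the bounded-Lipschitz estimate $\sup_{\|\varphi\|_L\le 1}|\E(\varphi(\chi^n))-\E(\varphi(\chi_1^n))|\le\E(\|\chi_2^n\|_X)$, you via Chebyshev plus Slutsky), and conclude tightness from relative weak compactness on the Polish space $X$. If anything, your version is slightly more careful, since it explicitly runs the extraction along every subsequence before invoking Prokhorov.
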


\begin{proof}
Since $\{\mathcal L(\chi_1^n)\}_{n=1}^\infty$ are tight, there exists
a subsequence of $\{\mathcal L(\chi_1^n)\}_{n=1}^\infty$ (not relabelled)
and a $\mu\in \mathcal P(X)$
such that $\{\mathcal L(\chi_1^n)\}_{n=1}^\infty$ convergence weakly  to $\mu$, i.e.,
for any $\varphi\in C_b(X)$,
\be\label{chi1}
(\mathcal L(\chi_1^n), \varphi)\rightarrow (\mu,\varphi), \quad \text{as}\quad n\rightarrow\infty.
\ee
Meanwhile,
\begin{align}\label{chi2}
\begin{split}
 &\mathop {\lim }\limits_{\scriptstyle \varphi  \in L_b (X) \hfill \atop
  \scriptstyle \left\| \varphi  \right\|_L  \le 1 \hfill} \left| {\E\left( {\varphi \left( {\chi ^n }
   \right)} \right) - \E\left( {\varphi \left( {\chi _1^n } \right)} \right)} \right| \\
 & \le \mathop {\lim }\limits_{\scriptstyle \varphi  \in L_b (X) \hfill \atop
  \scriptstyle \left\| \varphi  \right\|_L  \le 1 \hfill} \E\left( {\left|
  {\left( {\varphi \left( {\chi ^n } \right)} \right) -
  \left( {\varphi \left( {\chi _1^n } \right)} \right)} \right|} \right) \\
  &\le \E\left( {\left\| {\chi ^n  - \chi _1^n } \right\|_X } \right) =
  \E\left( {\left\| {\chi _2^n } \right\|_X } \right) \le
   \E\left( {\left\| {\chi _2^n } \right\|_X^2 } \right) \to 0.
\end{split}
 \end{align}
By \eqref{chi1} and \eqref{chi2} we have for any $\varphi\in C_b(X)$,
\[
(\mathcal L(\chi^n), \varphi)\rightarrow (\mu,\varphi), \quad \text{as}\quad n\rightarrow\infty.
\]
This completes the proof.

\end{proof}

We now present  the $\mathcal D_1$-pullback asymptotical
 compactness  of $S^\varepsilon$ associated with   \eqref{aeu3}.

\begin{lem}\label{Ltc}
If  \eqref{cs1}-\eqref{La} and $\eqref{acg}$ hold, then
$S^\varepsilon$ with $0<\varepsilon<\varepsilon_0$  is $\mathcal D_1$-pullback asymptotically compact
 in $\mathcal P_2 \left(\mathbb L^2(\mathcal O) \right)$;
  that is,
for every $\tau\in \R$, $\left\{ {S^\varepsilon  \left( {t_n,\tau-t_n} \right)\mu _n } \right\}_{n = 1}^\infty
$ has a convergent subsequence in $\mathcal P_2 \left(\mathbb L^2(\mathcal O) \right)$ whenever
$t_n\rightarrow +\infty$ and $\mu _n\in D_1(\tau-t_n)$ with $D_1\in \mathcal D_1$.
\end{lem}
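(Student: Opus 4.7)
The plan is to reduce $\mathcal{D}_1$-pullback asymptotic compactness in $(\mathcal{P}_2(\mathbb{L}^2(\mathcal{O})), d_{\mathcal{P}_2(\mathbb{L}^2(\mathcal{O}))})$ to two ingredients: tightness of the family $\{\mathcal{L}(w^\varepsilon(\tau,\tau-t_n,\xi^\varepsilon_n))\}_{n\ge 1}$ in $\mathcal{P}(\mathbb{L}^2(\mathcal{O}))$, together with a uniform second-moment bound. Indeed, since a sequence in $(\mathcal{P}_2, d_{\mathcal{P}_2})$ converges if and only if it converges weakly, Prokhorov's theorem delivers a weakly convergent subsequence from tightness; the uniform bound from Lemma \ref{Tsb2}, combined with Fatou's lemma, then forces the weak limit to belong to $\mathcal{P}_2(\mathbb{L}^2(\mathcal{O}))$. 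For each $n$, pick $\xi^\varepsilon_n=(\xi^\varepsilon_{1,n},\xi^\varepsilon_{2,n})\in L^2_{\mathcal{F}_{\tau-t_n}}(\Omega,\mathbb{L}^2(\mathcal{O}))$ with $\mathcal{L}(\xi^\varepsilon_n)=\mu_n\in D_1(\tau-t_n)$ so that $S^\varepsilon(t_n,\tau-t_n)\mu_n=\mathcal{L}(w^\varepsilon(\tau,\tau-t_n,\xi^\varepsilon_n))$.

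The main obstacle is the partial dissipativity of the FitzHugh--Nagumo system: the second component $v^\varepsilon$ has no parabolic smoothing, so one cannot hope to bound $\mathbb{E}\|v^\varepsilon(\tau)\|_{H^1_\varepsilon(\mathcal{O})}^2$ uniformly. To bypass this, I will invoke the splitting $v^\varepsilon=v_1^\varepsilon+v_2^\varepsilon$ already introduced in Section 5, where $v_1^\varepsilon$ solves the homogeneous linear equation \eqref{v1} and decays in mean square by \eqref{v1e}, while $v_2^\varepsilon$ solves \eqref{v21}--\eqref{v22} and admits a uniform $H^1_\varepsilon$ bound by Lemma \ref{aTsb4}. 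Set
\begin{equation*}
X_n^1=\bigl(u^\varepsilon(\tau,\tau-t_n,\xi^\varepsilon_n),\ v_2^\varepsilon(\tau,\tau-t_n,0)\bigr),\qquad X_n^2=\bigl(0,\ v_1^\varepsilon(\tau,\tau-t_n,\xi^\varepsilon_{2,n})\bigr),
\end{equation*}
so that $w^\varepsilon(\tau,\tau-t_n,\xi^\varepsilon_n)=X_n^1+X_n^2$. By Lemma \ref{Tsb4} combined with Lemma \ref{aTsb4}, and using \eqref{acg} together with the equivalence \eqref{4.1}, there exists $R>0$ independent of $n$ (for $n$ large) such that $\mathbb{E}\|X_n^1\|_{\mathbb{H}^1_\varepsilon(\mathcal{O})}^2\le R^2$. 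Since the embedding $H^1(\mathcal{O})\hookrightarrow L^2(\mathcal{O})$ is compact, the closed ball $\{w\in \mathbb{H}^1_\varepsilon(\mathcal{O}):\|w\|_{\mathbb{H}^1_\varepsilon}\le M\}$ is precompact in $\mathbb{L}^2(\mathcal{O})$ for every $M>0$, and a Chebyshev-type argument then gives, for any $\delta>0$, a precompact set $K_\delta\subseteq \mathbb{L}^2(\mathcal{O})$ with $P(X_n^1\notin K_\delta)<\delta$ uniformly in $n$; hence $\{\mathcal{L}(X_n^1)\}$ is tight.

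On the other hand, \eqref{v1e} shows $\mathbb{E}\|X_n^2\|_{\mathbb{L}^2(\mathcal{O})}^2\to 0$ as $n\to\infty$, since the initial data bound is controlled by $\|D_1(\tau-t_n)\|^2_{\mathcal{P}_2(\mathbb{L}^2(\mathcal{O}))}e^{-c t_n}$ and $D_1\in\mathcal{D}_1$. Applying Lemma \ref{Lch} with $\chi^n=X_n^1+X_n^2$, $\chi_1^n=X_n^1$, $\chi_2^n=X_n^2$ in $X=\mathbb{L}^2(\mathcal{O})$, I conclude that $\{\mathcal{L}(w^\varepsilon(\tau,\tau-t_n,\xi^\varepsilon_n))\}_{n\ge 1}$ is tight in $\mathcal{P}(\mathbb{L}^2(\mathcal{O}))$. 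By Prokhorov's theorem there is a subsequence converging weakly to some $\mu\in\mathcal{P}(\mathbb{L}^2(\mathcal{O}))$, and Lemma \ref{Tsb2} yields $\sup_n \int_{\mathbb{L}^2(\mathcal{O})}\|z\|^2\,\mathcal{L}(w^\varepsilon(\tau,\tau-t_n,\xi^\varepsilon_n))(dz)<\infty$; Fatou's lemma places $\mu$ in $\mathcal{P}_2(\mathbb{L}^2(\mathcal{O}))$, and the characterization of $d_{\mathcal{P}_2(\mathbb{L}^2(\mathcal{O}))}$-convergence as weak convergence finishes the proof.
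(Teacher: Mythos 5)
Your proposal is correct and follows essentially the same route as the paper: the same splitting $w^\varepsilon=(u^\varepsilon,v_2^\varepsilon)+(0,v_1^\varepsilon)$, tightness of the regular part via the uniform $\mathbb H^1_\varepsilon$ bounds of Lemmas \ref{Tsb4} and \ref{aTsb4} together with Chebyshev's inequality and the compact embedding $\mathbb H^1(\calo)\hookrightarrow \mathbb L^2(\calo)$, decay of the other part via \eqref{v1e}, and Lemma \ref{Lch} to combine them. Your explicit Prokhorov--Fatou step to place the weak limit in $\mathcal P_2(\mathbb L^2(\calo))$ is a welcome bit of extra care but not a different argument.
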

\begin{proof}
Given $s\in \R$, $t\geq s$ and $\xi^\varepsilon=(\xi^\varepsilon_1,\xi^\varepsilon_2)\in
 L^2_{\mathcal{F}_\tau}( \mathbb L^2(\mathcal {O}) ))$,
define
$$w^\varepsilon_1(t,s,\xi^\varepsilon)=(0,v^\varepsilon_1(t,s,\xi^\varepsilon_2))\quad
\text{and}\quad w^\varepsilon_2(t,s,\xi^\varepsilon)
=(u^\varepsilon(t,s,\xi^\varepsilon),v^\varepsilon_2(t,s,\xi^\varepsilon)).$$
It is clear that
\be \label{w12}
w\left( {t,s,\xi ^\varepsilon  } \right) = w_1 \left( {t,s,\xi ^\varepsilon  } \right)
+ w_2 \left( {t,s,\xi ^\varepsilon  } \right).
\ee

Let $p^{1,\varepsilon}(t,s)$ and $p^{2,\varepsilon}(t,s)$ be the transition process associated with
$w^\varepsilon_1$  and $w^\varepsilon_2$, respectively,
which are similar to the definition of $p^{\varepsilon}(t,s)$.
Define $p_*^{1,\varepsilon}(t,s)$ and $p_*^{2,\varepsilon}(t,s)$
are the dual operators of $p^{1,\varepsilon}(t,s)$ and $p^{2,\varepsilon}(t,s)$,
respectively.

Given
  $t\in \R^+$, $\tau\in \R$, $i=1,2$ and
  $0<\varepsilon<\varepsilon_0$,
  let $S^{i,\varepsilon}(t,\tau):\mathcal P_2(\mathbb L^2(\calo))\rightarrow\mathcal P_2(\mathbb L^2(\calo))$
  be the map given by
$$
S^{i,\varepsilon}(t,\tau)\mu
=p_{\ast}^{i,\varepsilon}(\tau+t,\tau)
\mu ,
\quad \forall  \
 \mu \in \mathcal P_2(\mathbb L^2(\calo)).
$$

By \eqref{v1e} we get
\be \label{s1to}
\E(\|w_1 \left( {\tau-t,\tau,\xi ^\varepsilon  } \right)\|^2)\rightarrow 0, \quad \text{as}\quad t\rightarrow\infty.
\ee
Form \eqref{w12}, \eqref{s1to} and Lemma \ref{Lch} it follows that the sequence
 $\left\{ {S^\varepsilon  \left( {t_n,\tau-t_n} \right)\mu _n } \right\}_{n = 1}^\infty
$ has a convergent subsequence in $\mathcal P_2 \left(\mathbb  L^2(\mathcal O) \right)$  as long as
 $\left\{ {S^{2,\varepsilon}  \left( {t_n,\tau-t_n} \right)\mu _n } \right\}_{n = 1}^\infty
$ is tight.

Given $R>0$ and $0<\varepsilon<\varepsilon_0$, let
$  B_\varepsilon(R)=\{ (u,v)\in \mathbb H^1(\calo): \| (u,v) \|_{\mathbb H^1_\varepsilon(\calo) } \le R\}$.
By Lemma
\ref{Tsb4} and \ref{aTsb4} we see that  for every $\tau\in \R$
 and $D_1=\{D_1(t):t\in \R\}\in \mathcal D_1$, there exists   $T=T(\tau,D_1)>0$,
independent of $\varepsilon$, such that for  all
$t\geq T$ and $0<\varepsilon<\varepsilon_0$, the solution $u^\varepsilon$ of \eqref{aeu3}
and $v^\varepsilon$ of \eqref{v21}
 satisfies
$$
\E \left (
\|u^\varepsilon \left( {\tau,\tau-t,\xi^\varepsilon} \right)\|^2_{H^1_\varepsilon(\calo)}
+\|v_2^\varepsilon \left( {\tau,\tau-t,\xi^\varepsilon} \right)\|^2_{H^1_\varepsilon(\calo)}
\right ) \le L_2(\tau),
$$
where
 $\xi^\varepsilon  \in
L^2_{\mathcal{F}_{\tau-t}}(\Omega,   L^2(\mathcal {O}))$
with
$\mathcal L(\xi^\varepsilon)\in D_1(\tau-t)$ and
 $L_2(\tau)>0$ is a   constant depending    on $\tau$, but independent
of   $D_1$ and $\varepsilon$.
Then by  Chebyshev's inequality,
we see that
for all  $R >0$,
  $t\geq T $ and $0<\varepsilon<\varepsilon_0$,
$$
P \left ( \left \{
(u^\varepsilon \left( {\tau,\tau-t,\xi^\varepsilon} \right),v^\varepsilon_2 \left( {\tau,\tau-t,\xi^\varepsilon} \right))
\in   B_\varepsilon(R ) \right \}
\right )
\ge 1-{\frac {L_2(\tau)}{R^2 }},
$$
where  $\xi^\varepsilon  \in
L^2_{\mathcal{F}_{\tau-t}}(\Omega,  \mathbb L^2(\mathcal {O}))$
with
$\mathcal L(\xi^\varepsilon)\in D_1(\tau-t)$.
This along with
the
compactness of
the   embedding
$\mathbb H^1(\calo)\hookrightarrow \mathbb L^2(\calo)$
implies
 the  tightness of  probability  distributions of the family \\
$\left\{ {(u^\varepsilon \left( {\tau,\tau-t,\xi^\varepsilon} \right),v^\varepsilon_2 \left( {\tau,\tau-t,\xi^\varepsilon} \right))}:
   \mathcal L(\xi^\varepsilon)\in D_1(\tau-t),
   \ t\geq T  \right\}$
in  $\mathbb L^2(\calo)$. Thus,
$\left\{ {S^{2,\varepsilon}  \left( {t_n,\tau-t_n} \right)\mu _n } \right\}_{n = 1}^\infty
$ is tight. This completes the proof.
  \end{proof}

Now, we are in a position to proof the  existence, uniqueness and periodicity of pullback
measure attractor.\\
 {\bf{Proof of Theorem \ref{Tie1}}.}
It follows from Lemma \ref{dy} that $S^\varepsilon$ is
a
continuous  non-autonomous dynamical system on $\mathcal P_2(\mathbb L^2(\mathcal O))$.
Notice that $S^\varepsilon$ has a $\mathcal D_1$-pullback  absorbing
set $K$ in $\mathcal P_2(\mathbb L^2(\calo))$ by Lemma \ref{Lms}, and is $\mathcal D_1$-pullback
 asymptotically compact in $P_2(\mathbb L^2(\calo))$ by Lemma \ref{Ltc}.
 Hence the existence and uniqueness  of  the  $\mathcal D_1$-pullback
measure attractor for $S^\varepsilon$ follows from Proposition  \ref{Tesa} immediately.
We now consider the periodicity of the measure attractor $\mathcal A_\varepsilon$.
By \eqref{Lms a} and \eqref{Lms b} we find that
$K$ is $\rho$-periodic. In addition, it follows from  Lemma \ref{markovp2} and \eqref{sp}
the non-autonomous dynamical system $S^\varepsilon$
 associated with system
\eqref{aeu3} is also $\varpi$-periodic. Thus, from Proposition  \ref{Tesa}, the periodicity
of the measure attractor $\mathcal A_\varepsilon$ follows.

\section{Upper Semicontinuity   of   Pullback Measure Attractors  }
\setcounter{equation}{0}

In this section, we prove the upper semicontinuity of pullback measure attractors for the
non-autonomous stochastic FitzHugh-Nagumo systems    when the $(n + 1)$-dimensional thin
domains collapse to an $n$-dimensional domain. To that end, we need the average
operator
${\mathcal M}: \mathbb L^2(\mathcal {O})
\to \mathbb L^2
(\mathcal {Q})$
as given by:
for every
$\varphi=(\varphi_1,\varphi_2)
\in \mathbb L^2(\mathcal {O})$,
$$
{\mathcal M}
\varphi (y^*)
=(\int_0^1
\varphi_1 (y^*, y_{n+1})
d y_{n+1},\int_0^1
\varphi_2 (y^*, y_{n+1})
d y_{n+1}),
\quad y^*\in {
\mathcal {Q}}.
$$

The following result on   average of  functions
 can be proved similarly as the lemma 3.1     in \cite{hale1992reaction}.

\begin{lem} \label{aaL5.3}
If $w\in \mathbb H^1(\mathcal O)$, then $\mathcal Mw\in \mathbb H^1(\mathcal Q)$ and
\[
\left\| {w - \mathcal Mw} \right\|_{\mathbb H_g \left(\mathcal O \right)}
 \le c\varepsilon \left\| w \right\|_{\mathbb H_\varepsilon ^1 \left(\mathcal O \right)},
\]
where $c$ is a constant   independent of $\varepsilon$.
\end{lem}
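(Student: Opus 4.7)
The plan is to reduce the bound to the one-dimensional Poincaré--Wirtinger inequality on the unit interval, applied slicewise in the thin direction $y_{n+1}$. Since $\mathcal M$ acts componentwise on $\mathbb H^1(\mathcal O)=H^1(\mathcal O)\times H^1(\mathcal O)$, I would first reduce to the scalar case: establish the inequality for a single $w\in H^1(\mathcal O)$ and then sum over the two components.

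First I would observe that, for a.e.\ $y^*\in\mathcal Q$, the trace $y_{n+1}\mapsto w(y^*,y_{n+1})$ lies in $H^1(0,1)$ (by Fubini applied to a smooth approximation), so the function $y_{n+1}\mapsto w(y^*,y_{n+1})-\mathcal Mw(y^*)$ belongs to $H^1(0,1)$ and has mean zero on $(0,1)$ by the very definition of $\mathcal M$. The one-dimensional Poincaré--Wirtinger inequality then yields
\begin{equation*}
\int_0^1 |w(y^*,y_{n+1})-\mathcal Mw(y^*)|^2\,dy_{n+1}\le c_1\int_0^1 |w_{y_{n+1}}(y^*,y_{n+1})|^2\,dy_{n+1},
\end{equation*}
with $c_1$ independent of $y^*$ and $\varepsilon$. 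Multiplying by $g(y^*)$, which is bounded by $\overline g$ thanks to \eqref{a1}, and integrating over $\mathcal Q$ via Fubini produces
\begin{equation*}
\|w-\mathcal Mw\|_{H_g(\mathcal O)}^2\le c_2\|w_{y_{n+1}}\|_{L^2(\mathcal O)}^2.
\end{equation*}
Combining this with the definition \eqref{a4.2} of $\|\cdot\|_{H_\varepsilon^1(\mathcal O)}$, which trivially gives $\|w_{y_{n+1}}\|_{L^2(\mathcal O)}^2\le\varepsilon^2\|w\|_{H_\varepsilon^1(\mathcal O)}^2$, furnishes the asserted $\varepsilon$-factor. Summing the two components delivers the $\mathbb H_g$-estimate.

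For the regularity claim $\mathcal Mw\in\mathbb H^1(\mathcal Q)$, I would verify $\|\mathcal Mw\|_{L^2(\mathcal Q)}\le\|w\|_{L^2(\mathcal O)}$ by Cauchy--Schwarz in the $y_{n+1}$-integral and show that $\partial_{y_i}\mathcal Mw=\mathcal M(\partial_{y_i}w)$ for $i=1,\ldots,n$ by interchanging the weak derivative with the integration in $y_{n+1}$. The main idea is really just the slicewise Poincaré inequality; the only point requiring any care is the a.e.\ restriction of $H^1$ functions to horizontal slices and the commutation of weak $y^*$-derivatives with $\int_0^1\cdot\,dy_{n+1}$, both of which are handled by a standard density argument via smooth approximation. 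I do not anticipate any serious obstacle beyond this routine justification.
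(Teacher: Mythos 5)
Your proof is correct and follows essentially the same route as the paper, which does not write out an argument but simply refers to Lemma 3.1 of Hale--Raugel \cite{hale1992reaction}; that lemma is established precisely by the slicewise Poincar\'e--Wirtinger inequality in the $y_{n+1}$-direction combined with the factor $\frac{1}{\varepsilon^2}\|w_{y_{n+1}}\|_{L^2(\mathcal O)}^2$ built into the norm \eqref{a4.2}. Your additional remarks on the a.e.\ restriction to slices and the commutation $\partial_{y_i}\mathcal Mw=\mathcal M(\partial_{y_i}w)$ are the standard justifications and present no difficulty.
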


In the sequel, we further assume the functions $G_i$,  $F$ and $\sigma_{i,k}$, for $i=1,2$ and $k\in \N$, satisfy
 \begin{equation}\label{cg}
\left\| {G_{i,\varepsilon}(t,\cdot)
  - G_{i,0}(t,\cdot) } \right\|_{L^2 \left( \mathcal O  \right)}
  \le \kappa_1(t)\varepsilon,\quad \text{for all}\,\, t\in {\mathbb{R}},
\end{equation}
\begin{equation}\label{cF}
 \left\| {F_\varepsilon(\cdot,s)   - F_0(\cdot,s) } \right\|_{L^2 \left( \mathcal O  \right)}
  \le \kappa_2\varepsilon ,\quad \text{for all}\,\, s\in {\mathbb{R}},
\end{equation}
and
 \begin{equation}\label{cs}
\left\| {\sigma_{i,k,\varepsilon}
  - \sigma_{i,k,0} } \right\|_{L^2 \left( \mathcal O  \right)}
  \le \varsigma_k\varepsilon,
\end{equation}
where $\kappa_1\in L_{loc}^2(\R)$, $\kappa_2$ and $\varsigma_k$, $k\in \N$, are positive constants and $
\sum\limits_{k = 1}^\infty  {\varsigma _k^2 }  < \infty.
$
 By \eqref{Ftof} and \eqref{cF} we have
\begin{equation}\label{cf}
\left\| {f_\varepsilon(\cdot,s)   - f_0(\cdot,s) } \right\|_{L^2 \left( \mathcal O  \right)}
  \le \kappa_2\varepsilon ,\quad \text{for all}\,\, s\in {\mathbb{R}}.
\end{equation}

The next lemma is concerned with the convergence of
the solutions of \eqref{aeu3} as $\varepsilon \to 0$.

\begin{thm}\label{Tee}
Suppose \eqref{cs1}-\eqref{La}, \eqref{acg} and \eqref{cg}-\eqref{cs} hold.
Given $\tau\in \R$ and     $R>0$, we have  for all  $t \ge 0$,
\[\mathop {\lim }\limits_{\varepsilon  \to 0}\mathop {\sup }\limits_{\mu^\varepsilon\in B_\varepsilon ^1 \left( R \right)}
d_{ {\mathcal P_2(\mathbb L^2 \left( {\mathcal O}  \right))}}
\left (
S^\varepsilon(t,\tau)
	\mu^\varepsilon , \ \
\left (
S^0(t,\tau) (\mu^\varepsilon
\circ \mathcal   M ^{-1}
) \right) \circ
{\mathcal{I}}^{-1}
\right ) =0,\]
where $ B_\varepsilon ^1 \left( R \right) = \left\{ {\mu  \in {\mathcal P }
_2(\mathbb L^2(\mathcal{O}))} :\   \int_{\mathbb H_\varepsilon ^1 \left( \mathcal O \right)} {\left\|
\xi   \right\|_{\mathbb H_\varepsilon ^1
\left( \mathcal O \right)}^2 \mu \left( {d\xi  } \right)\leq R}  \right\}.
$
\end{thm}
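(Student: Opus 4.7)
The plan is to exploit the Kantorovich--Rubinstein--type duality built into $d_{\mathcal P_2(\mathbb L^2(\calo))}$. Fix a random initial state $\xi^\varepsilon$ with $\mathcal L(\xi^\varepsilon)=\mu^\varepsilon$ and run the two systems on the same probability space driven by the \emph{same} Wiener family $\{W_k\}$: let $w^\varepsilon$ be the solution of \eqref{aeu3} from $\xi^\varepsilon$ and $w^0$ the solution of \eqref{aeu4} from $\mathcal M\xi^\varepsilon$. For every $\varphi\in L_b(\mathbb L^2(\calo))$ with $\|\varphi\|_L\le 1$,
\begin{equation*}
\bigl|(\varphi,S^\varepsilon(t,\tau)\mu^\varepsilon)-(\varphi,(S^0(t,\tau)(\mu^\varepsilon\!\circ\!\mathcal M^{-1}))\!\circ\!\mathcal I^{-1})\bigr|
\le \E\|w^\varepsilon(\tau+t)-\mathcal I w^0(\tau+t)\|_{\mathbb L^2(\calo)},
\end{equation*}
so it suffices to show that the right-hand side tends to $0$ as $\varepsilon\to 0$, uniformly in $\xi^\varepsilon$ with $\mathcal L(\xi^\varepsilon)\in B_\varepsilon^1(R)$.

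\textbf{Splitting.} To exploit the thinness of $\calo$, I would insert $\mathcal I\mathcal M w^\varepsilon$ and apply the triangle inequality
\begin{equation*}
\|w^\varepsilon-\mathcal I w^0\|_{\mathbb L^2(\calo)}\le \|w^\varepsilon-\mathcal I\mathcal M w^\varepsilon\|_{\mathbb L^2(\calo)}+\|\mathcal M w^\varepsilon-w^0\|_{\mathbb L^2(\mathcal Q)}.
\end{equation*}
By Lemma \ref{aaL5.3}, the first piece is dominated by $c\varepsilon\|w^\varepsilon\|_{\mathbb H^1_\varepsilon(\calo)}$, and a pointwise $\mathbb H^1_\varepsilon$-energy estimate on $[\tau,\tau+t]$ (obtained by applying It\^o's formula to $a_\varepsilon(u^\varepsilon,u^\varepsilon)+\|v^\varepsilon\|^2$, in the same spirit as Lemmas \ref{Tsb4} and \ref{aTsb4} but for $B_\varepsilon^1(R)$-integrable initial data in place of the pullback regime) gives $\E\|w^\varepsilon(\tau+t)\|^2_{\mathbb H^1_\varepsilon(\calo)}$ bounded uniformly in $\varepsilon$, so this piece is $O(\varepsilon)$. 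For the second piece, I would derive the equation satisfied by the fibrewise averages $(\bar u^\varepsilon,\bar v^\varepsilon):=(\mathcal Mu^\varepsilon,\mathcal Mv^\varepsilon)$ by integrating \eqref{aeu3} over $y_{n+1}\in(0,1)$; using the boundary conditions on $\partial\calo$, a direct computation yields, weakly, $\mathcal M(A_\varepsilon u^\varepsilon)=A_0\bar u^\varepsilon+\mathcal R_\varepsilon$, where $\mathcal R_\varepsilon$ is controlled by $\|u^\varepsilon_{y_{n+1}}\|_{L^2(\calo)}\le \varepsilon\|u^\varepsilon\|_{\mathbb H^1_\varepsilon(\calo)}$. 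Setting $U^\varepsilon=\bar u^\varepsilon-u^0$, $V^\varepsilon=\bar v^\varepsilon-v^0$, subtracting \eqref{aeu4}, and applying It\^o's formula to $\beta\|U^\varepsilon\|^2_{H_g(\mathcal Q)}+\alpha\|V^\varepsilon\|^2_{H_g(\mathcal Q)}$, the hypotheses \eqref{cg}, \eqref{cf}, \eqref{cs}, \eqref{cv3} together with the coercivity of $a_0$ and the integrated estimates of Lemmas \ref{eiv1} and \ref{eivv1} yield
\begin{equation*}
\tfrac{d}{ds}\,\E\bigl(\beta\|U^\varepsilon(s)\|^2+\alpha\|V^\varepsilon(s)\|^2\bigr)\le C\,\E\bigl(\|U^\varepsilon(s)\|^2+\|V^\varepsilon(s)\|^2\bigr)+\varepsilon^2\,h(s),
\end{equation*}
for some $h\in L^1(\tau,\tau+t)$; since $U^\varepsilon(\tau)=V^\varepsilon(\tau)=0$, Gr\"onwall's inequality delivers $\E\|U^\varepsilon(\tau+t)\|^2+\E\|V^\varepsilon(\tau+t)\|^2=O(\varepsilon^2)$.

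\textbf{Main obstacle.} The principal difficulty will be to handle the averaged nonlinear drift $\mathcal M f_\varepsilon(\cdot,u^\varepsilon)-f_0(\cdot,\bar u^\varepsilon)$, for which I propose the telescoping decomposition
\begin{equation*}
\mathcal M\bigl(f_\varepsilon(\cdot,u^\varepsilon)-f_0(\cdot,u^\varepsilon)\bigr)+\mathcal M\bigl(f_0(\cdot,u^\varepsilon)-f_0(\cdot,\mathcal I\bar u^\varepsilon)\bigr)+\bigl(\mathcal M f_0(\cdot,\mathcal I\bar u^\varepsilon)-f_0(\cdot,\bar u^\varepsilon)\bigr).
\end{equation*}
The last bracket vanishes because both $f_0$ and $\mathcal I\bar u^\varepsilon$ depend only on $y^{\ast}$, the first bracket is $O(\varepsilon)$ by \eqref{cf}, and for the middle bracket the one-sided bound \eqref{c3.3} and the polynomial growth \eqref{c3.2} reduce the pairing with $U^\varepsilon$ to controlling $\|u^\varepsilon-\mathcal I\bar u^\varepsilon\|_{L^p(\calo)}$; this in turn is handled by Lemma \ref{aaL5.3} together with the integrated $L^p$-bound from \eqref{ee} of Lemma \ref{Tsb2} via H\"older's inequality. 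A parallel decomposition handles the averaged diffusion $\mathcal M(\sigma_{1,k,\varepsilon}+\varpi_{1,k}(u^\varepsilon))-(\sigma_{1,k,0}+\varpi_{1,k}(\bar u^\varepsilon))$ using \eqref{cs} and \eqref{cv3}. Once all error terms are absorbed into $\varepsilon^{2}h(s)$, the Gr\"onwall argument closes and the theorem follows.
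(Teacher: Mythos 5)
Your reduction via the Lipschitz duality and the synchronous coupling (same Wiener family, initial data $\xi^\varepsilon$ versus $\mathcal M\xi^\varepsilon$) is exactly the paper's starting and ending point. But your main comparison runs along a genuinely different route: you average the thin-domain solution first, derive an equation for $\bar u^\varepsilon=\mathcal M u^\varepsilon$ on $\mathcal Q$, and compare $\bar u^\varepsilon$ with $u^0$ there, whereas the paper never forms $\mathcal M u^\varepsilon$ as a dynamical object. Instead it applies It\^o's formula directly to $\beta\|u^\varepsilon-u^0\|^2_{H_g(\calo)}+\alpha\|v^\varepsilon-v^0\|^2_{H_g(\calo)}$ on the fixed domain $\calo$ (with $u^0$ lifted, i.e.\ constant in $y_{n+1}$), absorbs the operator mismatch $A_\varepsilon-A_0$ into $-a_\varepsilon(u^\varepsilon-u^0,u^\varepsilon-u^0)$ plus an $O(\varepsilon)$ cross term as in \eqref{uc3}, and lets $\mathcal M$ enter only through the initial data via Lemma \ref{aaL5.3}.

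This difference is not cosmetic: your treatment of the averaged nonlinearity has a genuine gap. In your telescoping, the middle bracket $\mathcal M\bigl(f_0(\cdot,u^\varepsilon)-f_0(\cdot,\mathcal I\bar u^\varepsilon)\bigr)$ is paired with $U^\varepsilon=\bar u^\varepsilon-u^0$, \emph{not} with $u^\varepsilon-\mathcal I\bar u^\varepsilon$. The one-sided condition \eqref{c3.3} only controls $(f_0(\cdot,s_1)-f_0(\cdot,s_2))(s_1-s_2)$ from above when the increment in $f$ is paired against the \emph{same} increment in $s$; it gives no pointwise bound on $|f_0(\cdot,s_1)-f_0(\cdot,s_2)|$ (take $f(s)=-s^3$, which satisfies \eqref{c3.3} but is not Lipschitz), and the growth bound \eqref{c3.2} only controls $|f_0|$ itself, producing a term of size $O(1+\|u^\varepsilon\|_{L^p}^{p-1})$ that does not vanish with $\varepsilon$. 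So the claimed reduction ``to controlling $\|u^\varepsilon-\mathcal I\bar u^\varepsilon\|_{L^p(\calo)}$'' does not close under the stated hypotheses (and even if it did, Lemma \ref{aaL5.3} gives only an $L^2$, not an $L^p$, smallness estimate). The paper sidesteps this entirely: writing $f_\varepsilon(\cdot,u^\varepsilon)-f_0(\cdot,u^0)=[f_\varepsilon(\cdot,u^\varepsilon)-f_0(\cdot,u^\varepsilon)]+[f_0(\cdot,u^\varepsilon)-f_0(\cdot,u^0)]$ and pairing against $u^\varepsilon-u^0$ on $\calo$, the first bracket is $O(\varepsilon)$ by \eqref{cf} and the second is $\le\varrho_1\|u^\varepsilon-u^0\|^2_{H_g(\calo)}$ by \eqref{c3.3} with the correct sign, as in \eqref{uc4}. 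To repair your argument you would either need a two-sided locally Lipschitz hypothesis on $F$ (not assumed), or you should abandon the averaging of the equation and compare on $\calo$ as the paper does; the rest of your outline (the $O(\varepsilon)$ remainder from $\mathcal M A_\varepsilon$, the Gr\"onwall closure using Lemmas \ref{eiv1}--\ref{eivv10}, and the final duality step) would then go through.
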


\begin{proof}
Let $(u^\varepsilon(t),v^\varepsilon(t))=(u^\varepsilon(t,\tau,\xi^\varepsilon),v^\varepsilon(t,\tau,\xi^\varepsilon))$,
where $\E(\left\| \xi  \right\|_{\mathbb H_\varepsilon ^1
\left( \mathcal O \right)}^2)\leq
R$, and $(u^0(t),v^0(t))=(u^0(t,\tau,\mathcal M \xi^\varepsilon),v^0(t,\tau,\mathcal M \xi^\varepsilon))$.
By \eqref{aeu3}-\eqref{aeu4} and  Ito's formula,
we get  for $t\geq \tau$,
\begin{align}\label{uc2}
\begin{split}
&\beta\left\| {{u^\varepsilon }\left( t \right) - {u^0}\left( t \right)} \right\|_{{H_g}\left( \calo \right)}^2
+\alpha\left\| {{v^\varepsilon }\left( t \right) - {v^0}\left( t \right)} \right\|_{{H_g}\left( \calo \right)}^2\\
 &= \beta\left\| {{\xi ^\varepsilon_1 } - {\xi ^0_1}} \right\|_{{H_g}\left( \calo \right)}^2
 +\alpha\left\| {{\xi ^\varepsilon_2 } - {\xi ^0_2}} \right\|_{{H_g}\left( \calo \right)}^2\\
  &\quad- 2\beta\int_\tau^t {{{\left( {{A_\varepsilon }{u^\varepsilon }\left( s \right) -
  {A_0}{u^0}\left( s \right),{u^\varepsilon }\left( s \right) - {u^0}\left( s \right)} \right)}_{{H_g}\left( \calo \right)}}ds} \\
 &\quad- 2\beta\lambda \int_\tau^t {{{\| {{u^\varepsilon }\left( s \right) -
 {u^0}\left( s \right)} \|}^2_{{H_g}\left( \calo\right)}}ds}- 2\alpha\gamma \int_\tau^t {{{\| {{v^\varepsilon }\left( s \right) -
 {v^0}\left( s \right)} \|}^2_{{H_g}\left( \calo\right)}}ds}\\
  &\quad + 2\beta\int_\tau^t {{{\left( {{G_{1,\varepsilon}(s,\cdot) }
 - {G_{1,0}(s,\cdot)},{u^\varepsilon }\left( s \right)
 - {u^0}\left( s \right)} \right)}_{{H_g}\left( \calo \right)}}} ds\\
 &\quad+ 2\alpha\int_\tau^t {{{\left( {{G_{2,\varepsilon}(s,\cdot) }
 - {G_{2,0}(s,\cdot)},{v^\varepsilon }\left( s \right)
 - {v^0}\left( s \right)} \right)}_{{H_g}\left( \calo \right)}}} ds \\
 &\quad+ 2\beta\int_\tau^t {{{\left( {{f_\varepsilon }\left( {\cdot,{u^\varepsilon }\left( s \right)} \right)
 - {f_0}\left( {\cdot,{u^\varepsilon }\left( s \right)} \right),{u^\varepsilon }\left( s \right)
 - {u^0}\left( s \right)} \right)}_{{H_g}\left( \calo \right)}}} ds\\
&\quad +\beta \sum\limits_{k = 1}^\infty  {\int_\tau^t {\left\|
 {{\sigma _{1,k,\varepsilon }}
- {\sigma _{1,k,0}} + {
 {\varpi _{1,k}}\left( {{u^\varepsilon }\left( s \right)} \right) -
{\varpi _{1,k}}\left( {{u^0}\left( s \right)} \right)} } \right\|_{{H_g}\left( \calo \right)}^2ds} } \\
&\quad +\alpha \sum\limits_{k = 1}^\infty  {\int_\tau^t {\left\|
 {{\sigma _{2,k,\varepsilon }}
- {\sigma _{2,k,0}} + {
 {\varpi _{2,k}}{{v^\varepsilon }\left( s \right)} -
{\varpi _{2,k}} {{v^0}\left( s \right)}} } \right\|_{{H_g}\left( \calo \right)}^2ds} }\\
 &\quad+ 2\beta\sum\limits_{k = 1}^\infty
 {\int_\tau^t {{{\left( {{\sigma _{1,k,\varepsilon }}
  - {\sigma _{1,k,0}} ,{u^\varepsilon }\left( s \right) -
  {u^0}\left( s \right)} \right)}_{{H_g}\left(\calo \right)}}d{W_k}\left( s \right)} }\\
  &\quad+ 2\alpha\sum\limits_{k = 1}^\infty
 {\int_\tau^t {{{\left( {{\sigma _{2,k,\varepsilon }}
  - {\sigma _{2,k,0}} ,{v^\varepsilon }\left( s \right) -
  {v^0}\left( s \right)} \right)}_{{H_g}\left(\calo \right)}}d{W_k}\left( s \right)} }\\
&\quad+ 2\beta\sum\limits_{k = 1}^\infty  {\int_\tau^t {{{\left( {
{\varpi _{1,k}}\left( {{u^\varepsilon }\left( s \right)} \right) -
{\varpi _{1,k}}\left( {{u^0}\left( s \right)} \right),{u^\varepsilon }\left( s \right)
 - {u^0}\left( s \right)} \right)}_{{H_g}\left( \calo \right)}}d{W_k}\left( s \right)}.}
  \\
&\quad+ 2\alpha\sum\limits_{k = 1}^\infty  {\int_\tau^t {{{\left( {
{\varpi _{2,k}}{{v^\varepsilon }\left( s \right)} -
{\varpi _{2,k}} {{v^0}\left( s \right)},{v^\varepsilon }\left( s \right)
 - {v^0}\left( s \right)} \right)}_{{H_g}\left( \calo \right)}}d{W_k}\left( s \right)}.}
 \end{split}
\end{align}
For the third term on the right-hand side of \eqref{uc2}, we have
\begin{align}\label{uc3}
\begin{split}
 &- 2\beta\int_\tau^t {{{\left( {{A_\varepsilon }{u^\varepsilon }
 \left( s \right) -
  {A_0}{u^0}\left( s \right),{u^\varepsilon }\left( s \right) - {u^0}\left( s \right)} \right)}_{{H_g}\left( \calo \right)}}ds} \\
 &=  - 2\beta\int_\tau^t {{a_\varepsilon }\left( {{u^\varepsilon }\left( s \right) -
 {u^0}\left( s \right),{u^\varepsilon }\left( s \right) - {u^0}\left( s \right)} \right)ds} \\
 &\quad+ 2\beta\sum\limits_{i = 1}^n {\int_\tau^t {{{\left( {\frac{{{g_{{y_i}}}}}{g}u_{{y_i}}^0
 \left( s \right),{y_{n + 1}}\left( {u_{{y_{n + 1}}}^\varepsilon \left( s \right) -
  u_{{y_{n + 1}}}^0\left( s \right)} \right)} \right)}_{{H_g}\left( \calo \right)}}ds.} }
 \end{split}
\end{align}
By   (\ref{c3.3}) and (\ref{cf}) we obtain
\begin{align}\label{uc4}
\begin{split}
&2\beta\int_\tau^t{\left( {{f_\varepsilon }\left( {\cdot,{u^\varepsilon }\left( s \right)} \right) -
{f_0}\left( {{\cdot},{u^\varepsilon }\left( s \right)} \right),{u^\varepsilon }\left( s \right)
 - {u^0}\left( s \right)} \right)_{{H_g}\left( \calo \right)}}ds\\
 &\le 2\beta\varrho_1 \int_\tau^t\left\| {{u^\varepsilon }\left( s \right) - {u^0}\left( s \right)} \right\|_{{H_g}
 \left( \calo \right)}^2ds + c_1{\kappa _2}\varepsilon\int_\tau^t \left( {1 + 2\left\| {{u^\varepsilon }
 \left( s \right)} \right\|_{{H_g}\left( \calo \right)}^2 + 2\left\| {{u^0}\left( s \right)}
  \right\|_{{H_g}\left( \calo \right)}^2} \right)ds.
 \end{split}
\end{align}
By  (\ref{cg}), we  get
\begin{align}\label{uc5}
\begin{split}
 &2\beta\int_\tau^t {\left( {G_{1,\varepsilon}(s,\cdot)   - G_{1,0}(s,\cdot)  , u^\varepsilon
   \left( s \right) - u^0 \left( s \right)} \right)_{H_g
    \left( \calo \right)} } ds\\
  &\le c_2 \varepsilon \int_\tau^t {\left( {\kappa^2_1(s) + \left\| {u^\varepsilon  \left( s \right)}
  \right\|_{H_g \left(  \calo  \right)}^2  + \left\| {u^0 \left( s \right)} \right\|_{H_g \left( \mathcal Q   \right)}^2
  + \left\| {v^\varepsilon  \left( s \right)}
  \right\|_{H_g \left(  \mathcal O \right)}^2  +
  \left\| {v^0 \left( s \right)} \right\|_{H_g \left( \mathcal Q  \right)}^2 } \right)} ds.
 \end{split}
\end{align}
By  \eqref{cs}, we  get
\begin{align}\label{uc6}
\begin{split}
2\beta\sum\limits_{k = 1}^\infty  {\int_\tau^t {\left\|
 {{\sigma _{1,k,\varepsilon }}  -
{\sigma _{1,k,0}} } \right\|_{{H_g}\left( \calo \right)}^2dt} }
+2\alpha\sum\limits_{k = 1}^\infty  {\int_\tau^t {\left\|
 {{\sigma _{2,k,\varepsilon }}  -
{\sigma _{2,k,0}} } \right\|_{{H_g}\left( \calo \right)}^2dt} }
 \le {c _3}{\varepsilon ^2}(t-\tau).
 \end{split}
\end{align}
It follows from \eqref{cv3} we  get
\begin{align}\label{uc7}
\begin{split}
&2\beta\sum\limits_{k = 1}^\infty  {\int_\tau^t {\left\| {{\varpi _{1,k}}
\left( {{u^\varepsilon }\left( s \right)} \right) - {\varpi _{1,k}}
\left( {{u^0}\left( s \right)} \right)} \right\|_{{H_g}\left( \calo \right)}^2dt} }\\
&\quad+2\alpha\sum\limits_{k = 1}^\infty  {\int_\tau^t {\left\| {{\varpi _{2,k}}
{{v^\varepsilon }\left( s \right)} - {\varpi _{2,k}}
{{v^0}\left( s \right)}} \right\|_{{H_g}\left( \calo \right)}^2dt} }\\
& \le  \sum\limits_{k = 1}^\infty  (2\beta L_{k}^2+2\alpha \varpi_{2,k}^2)
 \int_\tau^t {\left(\left\| {{u^\varepsilon }\left( s \right) - {u^0}\left( s \right)} \right\|_{{H_g}\left( \calo \right)}^2
 +\left\| {{v^\varepsilon }\left( s \right) - {v^0}\left( s \right)} \right\|_{{H_g}\left( \calo \right)}^2\right)dt}.
 \end{split}
\end{align}
Finally, by (\ref{a4.2}), we have
\begin{align}\label{uc6}
\begin{split}
&2\beta \sum\limits_{i=1}^{n} { \int_{\tau}^t\left( {\frac{{g_{y_i} }}{g} u^0 _{y_i }(s),y_{n+1}
  ( u^\varepsilon_{y_{n+1}}(s) -u^0_{y_{n+1}}(s)  )
    } \right)_{H_g({\mathcal O}) }ds}\\
   &=2\beta\sum\limits_{i=1}^{n}{ \int_{\tau}^t\left( {g_{y_i}  u^0_{y_i }(s),
   y_{n+1}  ( u^\varepsilon_{y_{n+1}}(s) -u^0_{y_{n+1}}(s)  )
   } \right)_{L^2(\mathcal O)}ds}\\
   &\le c_4\varepsilon\int_{\tau}^t( \left\| u^0(s) \right\|_{H^1 \left({\mathcal Q} \right)}
    \left\| {u^\varepsilon(s) -u^0(s)  } \right\|_{H_\varepsilon ^1\left( {\mathcal O} \right) })ds\\
& \le  c_5\varepsilon \int_{\tau}^t\left( {\left\| {u^\varepsilon(s)  } \right\|_{H_\varepsilon ^1
   \left( {\mathcal O} \right)}^2
   + \left\| u^0(s) \right\|_{H ^1 \left( {\mathcal Q} \right)}^2 } \right)ds.
 \end{split}
\end{align}

Taking the expectation of \eqref{uc2} and using \eqref{uc3}-\eqref{uc6}, we obtain for all $t\geq \tau$,
\begin{align}\label{uc7}
\begin{split}
 &\E\left( {\left\| {u^\varepsilon  \left( t \right) - u^0 \left( t \right)} \right\|_{H_g \left( \calo \right)}^2
 +\left\| {v^\varepsilon  \left( t \right) - v^0 \left( t \right)} \right\|_{H_g \left( \calo \right)}^2 }
  \right) \\
  &\le c_6\left\| {\xi ^\varepsilon   -\calm \xi ^\varepsilon }
   \right\|_{\mathbb H_g \left( \calo \right)}^2 \\
 &\quad +c_7\int_\tau^t {\E\left( {\left\| {u^\varepsilon
  \left( s \right) - u^0 \left( s \right)} \right\|_{H_g \left( \calo \right)}^2+
  \left\| {v^\varepsilon
  \left( s \right) - v^0 \left( s \right)} \right\|_{H_g \left( \calo \right)}^2 } \right)} ds \\
  &\quad + c_8\varepsilon \int_\tau^t {\E\left( {1+\kappa^2_1(s)+\left\| {u^\varepsilon  \left( s \right)}
   \right\|_{H_\varepsilon ^1 \left( \calo \right)}^2  + \left\| {u^0 \left( s \right)}
    \right\|_{H^1 \left( \mathcal Q \right)}^2+\left\| {v^\varepsilon  \left( s \right)}
   \right\|_{H ^1 \left( \mathcal O \right)}^2  + \left\| {v^0 \left( s \right)}
    \right\|_{H^1 \left( \mathcal Q \right)}^2 } \right)} ds.
 \end{split}
\end{align}
By \eqref{uc7}, Lemma \ref{Tsb4} and Lemma \ref{eiv1}-\ref{eivv10} we find that for $t\geq \tau$,
\begin{align*}
\begin{split}
 &\E\left( {\left\| {u^\varepsilon  \left( t \right) - u^0 \left( t \right)} \right\|_{H_g \left( \calo \right)}^2
 +\left\| {v^\varepsilon  \left( t \right) - v^0 \left( t \right)} \right\|_{H_g \left( \calo \right)}^2 } \right)\\
& \le c_6\left\| {\xi ^\varepsilon   -\calm \xi ^\varepsilon } \right\|_{\mathbb H_g \left( \calo \right)}^2
+c_7\int_\tau^t {\E\left( {\left\| {u^\varepsilon  \left( s \right) - u^0 \left( s \right)}
  \right\|_{H_g \left( \calo \right)}^2+
  \left\| {v^\varepsilon
  \left( s \right) - v^0 \left( s \right)} \right\|_{H_g \left( \calo \right)}^2 } \right)} ds\\
 &\quad + c_9\varepsilon  +
  c_9\varepsilon (t-\tau).
 \end{split}
\end{align*}
Then by Gronwall's inequlity and  Lemma \ref{aaL5.3},
we infer that for every $T>0$ and    $t \in [\tau, \tau+T]$,
\begin{align}\label{uc8}
\begin{split}
& \E\left( {\left\| {u^\varepsilon  \left( t \right) - u^0
  \left( t \right)} \right\|_{H_g \left( \calo \right)}^2+\left\| {v^\varepsilon  \left( t \right) - v^0
  \left( t \right)} \right\|_{H_g \left( \calo \right)}^2 } \right)\\
& \le \left( {c_6\left\| {\xi ^\varepsilon   -\calm \xi ^\varepsilon } \right\|_{\mathbb H_g \left( \calo \right)}^2
  + c_9\varepsilon
 +c_9\varepsilon T } \right)e^{ c_7t}  \\
 & \le \left( {c_{10}\varepsilon \left\| {\xi ^\varepsilon  }
  \right\|_{\mathbb H_\varepsilon ^1 \left( \calo \right)}^2  +
  c_9\varepsilon+ c_9\varepsilon T  } \right)e^{ c_7t}.
 \end{split}
\end{align}
It follows from  \eqref{uc8} that
\begin{align}\label{uc9}
\mathop {\lim }\limits_{\varepsilon  \to 0} \mathop {\sup }\limits_{\E\left({{\left\| {{\xi ^\varepsilon }}
 \right\|}_{\mathbb H_\varepsilon ^1\left( \calo\right)}^2}\right) \le R}\E\left( \left\| {{w^\varepsilon }
 \left( {t,\tau,{\xi ^\varepsilon }} \right) - {w^0}\left( {t,\tau,\calm{\xi ^\varepsilon }}
  \right)} \right\|_{\mathbb L^2(\mathcal O)}^2\right) = 0.
\end{align}

 Note that for
 all
 $t\geq \tau$ we have
 $$
 \mathop {\sup }\limits_{\E
 ({{\left\| {{\xi ^\varepsilon }}
 \right\|}_{\mathbb H_\varepsilon ^1\left( \calo\right)}^2}) \le R}\ \
\mathop {\sup }\limits_{\scriptstyle \varphi \in L_b \left( \mathbb L^2(\calo) \right) \hfill \atop
  \scriptstyle \left\| \varphi \right\|_L  \le 1 \hfill}
\left| {\E
\left (
\varphi\left( {w^\varepsilon \left( {t,\tau,\xi^\varepsilon } \right)} \right)
\right )
- \E
\left (
 \varphi\left( {w^0 \left( {t,\tau,\mathcal M\xi^\varepsilon } \right)} \right )
 \right ) }
 \right|
$$
$$
\le
 \mathop {\sup }\limits_{\E
 ({{\left\| {{\xi ^\varepsilon }}
 \right\|}_{\mathbb H_\varepsilon ^1\left( \calo\right)}^2}) \le R}\ \
\mathop {\sup }\limits_{\scriptstyle \varphi \in L_b \left(\mathbb L^2(\calo) \right) \hfill \atop
  \scriptstyle \left\| \varphi \right\|_L  \le 1 \hfill}
 {\E
\left ( \left |
\varphi\left( {w^\varepsilon \left( {t,\tau,\xi^\varepsilon } \right)} \right )
-
 \varphi\left( {w^0 \left( {t,\tau,\mathcal M\xi^\varepsilon } \right)} \right )
 \right |
 \right ) }
$$
 $$
\le
 \mathop {\sup }\limits_{\E
 ({{\left\| {{\xi ^\varepsilon }}
 \right\|}_{\mathbb H_\varepsilon ^1\left( \calo\right)}^2}) \le R}
 {\E
\left (
\|  {w^\varepsilon \left( {t,\tau,\xi^\varepsilon } \right)}
-
 {w^0 \left( {t,\tau,\mathcal M\xi^\varepsilon } \right) }
 \|_{\mathbb L^2(\mathcal{O})}
 \right ) }
$$
$$
\le
\left (
 \mathop {\sup }\limits_{\E
 ({{\left\| {{\xi ^\varepsilon }}
 \right\|}_{\mathbb H_\varepsilon ^1\left( \calo\right)}^2}) \le R}
 {\E
\left (
\|  {w^\varepsilon \left( {t,\tau,\xi^\varepsilon } \right)}
-
 {w^0 \left( {t,\tau,\mathcal M\xi^\varepsilon } \right) }
 \|^2 _{\mathbb L^2(\mathcal{O})}
 \right ) }
 \right )^{\frac 12}
$$
which along with
\eqref{uc9} implies that  for all $t\ge \tau$,
\[
\lim_{\varepsilon
\to 0} \  \
\mathop {\sup }\limits_{\E({{\left\| {{\xi ^\varepsilon }}
 \right\|}_{\mathbb H_\varepsilon ^1\left( \calo\right)}^2}) \le R}
 \   \
\mathop {\sup }\limits_{\scriptstyle \varphi \in L_b \left( \mathbb L^2(\calo) \right) \hfill \atop
  \scriptstyle \left\| \varphi \right\|_L  \le 1 \hfill}
\left| {\E
\left (
\varphi\left( {w^\varepsilon \left( {t,\tau,\xi^\varepsilon } \right)} \right)
\right )
- \E
\left (
 \varphi\left( {w^0 \left( {t,\tau,\mathcal M\xi^\varepsilon } \right)} \right )
 \right ) }
 \right| =0.
\]
This completes the proof.
\end{proof}

To examine the limiting behavior of the family of
attractors  $\mathcal{A}_\varepsilon(\tau)$  as $\varepsilon\rightarrow 0$, we provide the uniform boundedness of
 these attractors in $\mathbb H^1_\varepsilon(\calo)$ as given below.
\begin{lem}\label{Lue}
Assume that \eqref{cs1}-\eqref{La} and \eqref{acg} hold. Then for every $\tau\in \R$, $0<\varepsilon<\varepsilon_0$
and $\mu^\varepsilon \in \mathcal{A}_\varepsilon(\tau)$,
$$\int_{\mathbb H^1_\varepsilon ( {\mathcal{O}})}\| \xi\|^2_{\mathbb H^1_\varepsilon ( {\mathcal{O}})}\mu^\varepsilon(d\xi)
\le L_3(\tau),$$
where $L_3(\tau)>0$  depends on $\tau$,
but not on $\varepsilon$.
\end{lem}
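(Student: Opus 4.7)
The plan is to exploit the characterization of $\mathcal{A}_\varepsilon(\tau)$ via complete solutions, the inclusion $\mathcal{A}_\varepsilon\subseteq K$, and the splitting $v^\varepsilon=v^\varepsilon_1+v^\varepsilon_2$ introduced in \eqref{v1}--\eqref{v22}. Fix $\tau\in\R$, $0<\varepsilon<\varepsilon_0$ and $\mu^\varepsilon\in\mathcal{A}_\varepsilon(\tau)$. By Theorem \ref{Tie1} there is a $\mathcal{D}_1$-complete solution $\{\psi^\varepsilon(t)\}_{t\in\R}$ of $S^\varepsilon$ with $\psi^\varepsilon(\tau)=\mu^\varepsilon$; since $\mathcal{A}_\varepsilon\subseteq K$ by Lemma \ref{Lms}, we have $\psi^\varepsilon(\tau-t)\in K(\tau-t)$ for every $t\ge 0$, so the absorbing family $K\in\mathcal{D}_1$ contains the whole backward orbit. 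For each $t>0$ I would pick a random variable $\xi^\varepsilon_t=(\xi^\varepsilon_{t,1},\xi^\varepsilon_{t,2})\in L^2_{\mathcal F_{\tau-t}}(\Omega,\mathbb L^2(\calo))$ with $\mathcal L(\xi^\varepsilon_t)=\psi^\varepsilon(\tau-t)$; invariance of $\psi^\varepsilon$ under $S^\varepsilon$ then gives
\[
\mathcal L\bigl(w^\varepsilon(\tau,\tau-t,\xi^\varepsilon_t)\bigr)=p_*^\varepsilon(\tau,\tau-t)\psi^\varepsilon(\tau-t)=\mu^\varepsilon \qquad \text{for every } t\ge 0.
\]

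Next I split $v^\varepsilon=v^\varepsilon_1+v^\varepsilon_2$ as in \eqref{v1}--\eqref{v22}. Choose any sequence $t_n\to\infty$ and set
\[
\chi^n:=\bigl(u^\varepsilon(\tau,\tau-t_n,\xi^\varepsilon_{t_n}),\, v^\varepsilon(\tau,\tau-t_n,\xi^\varepsilon_{t_n})\bigr), \qquad \chi^n_2:=\bigl(0,\, v^\varepsilon_1(\tau,\tau-t_n,\xi^\varepsilon_{t_n,2})\bigr),
\]
\[
\chi^n_1:=\chi^n-\chi^n_2=\bigl(u^\varepsilon(\tau,\tau-t_n,\xi^\varepsilon_{t_n}),\, v^\varepsilon_2(\tau,\tau-t_n,0)\bigr),
\]
so that $\mathcal L(\chi^n)=\mu^\varepsilon$ for every $n$. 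By \eqref{v1e} we have $\E\|\chi^n_2\|^2_{\mathbb L^2(\calo)}\to 0$, while applying Lemma \ref{Tsb4} to $u^\varepsilon$ and Lemma \ref{aTsb4} to $v^\varepsilon_2$ (both with the $\mathcal{D}_1$-family $K$) produces a constant $L_3(\tau)$, depending only on $\tau$ and the data through \eqref{acg} but independent of $\varepsilon$ and of $\mu^\varepsilon$, such that
\[
\sup_{n\ge N(\tau)}\E\|\chi^n_1\|^2_{\mathbb H^1_\varepsilon(\calo)}\le L_3(\tau).
\]

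The last step is to transfer the $\mathbb H^1_\varepsilon$-bound from $\chi^n_1$ to the fixed measure $\mu^\varepsilon=\mathcal L(\chi^n)$. The uniform $\mathbb H^1_\varepsilon$-bound and the compact embedding $\mathbb H^1(\calo)\hookrightarrow\mathbb L^2(\calo)$ make $\{\mathcal L(\chi^n_1)\}$ tight in $\mathcal P(\mathbb L^2(\calo))$; combining this with the argument of Lemma \ref{Lch} and the fact that $\mathcal L(\chi^n)\equiv\mu^\varepsilon$ forces every subsequential weak limit of $\mathcal L(\chi^n_1)$ to coincide with $\mu^\varepsilon$, so $\mathcal L(\chi^n_1)\Rightarrow\mu^\varepsilon$ in $\mathcal P(\mathbb L^2(\calo))$. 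Since $\xi\mapsto\|\xi\|^2_{\mathbb H^1_\varepsilon(\calo)}$, extended by $+\infty$ outside $\mathbb H^1_\varepsilon(\calo)$, is non-negative and lower semicontinuous on $\mathbb L^2(\calo)$, the Portmanteau theorem applied to this functional yields
\[
\int_{\mathbb H^1_\varepsilon(\calo)}\|\xi\|^2_{\mathbb H^1_\varepsilon(\calo)}\mu^\varepsilon(d\xi)\le \liminf_{n\to\infty}\E\|\chi^n_1\|^2_{\mathbb H^1_\varepsilon(\calo)}\le L_3(\tau),
\]
which is exactly the claim.

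The main obstacle will be precisely this last step: one must integrate an $\mathbb H^1_\varepsilon$-valued functional against probability measures that a priori live only on $\mathbb L^2(\calo)$, and one must ensure that the constant delivered by Lemmas \ref{Tsb4}--\ref{aTsb4} is genuinely $\varepsilon$-independent even though the norm $\|\cdot\|_{\mathbb H^1_\varepsilon(\calo)}$ itself carries an $\varepsilon^{-1}$ scaling in the $y_{n+1}$-derivative. The remaining ingredients (complete-solution description of $\mathcal{A}_\varepsilon$, splitting, and the uniform estimates) are already in place in the paper, so the proof essentially amounts to combining them with the lower-semicontinuity argument above.
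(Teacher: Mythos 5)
Your proposal is correct and follows essentially the same route as the paper: backward invariance of $\mathcal{A}_\varepsilon$ (via complete solutions), the splitting $v^\varepsilon=v^\varepsilon_1+v^\varepsilon_2$ with $\E\|v_1^\varepsilon\|^2\to 0$ from \eqref{v1e}, and the $\varepsilon$-uniform $\mathbb H^1_\varepsilon$-bounds of Lemmas \ref{Tsb4} and \ref{aTsb4} applied to $(u^\varepsilon,v_2^\varepsilon)$. The only difference is cosmetic: where the paper extracts a narrow limit in $\mathcal P(\mathbb H^1_w(\calo))$ via tightness and LeCam's theorem and then identifies it with $\mu^\varepsilon$ through Lemma \ref{Lch}, you transfer the moment bound directly by weak convergence in $\mathcal P(\mathbb L^2(\calo))$ together with lower semicontinuity of $\|\cdot\|^2_{\mathbb H^1_\varepsilon(\calo)}$, which is a legitimate and slightly more elementary way to close the argument.
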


\begin{proof}
For $\tau\in \R$ and  $0<\varepsilon<\varepsilon_0$,  let $\mu^\varepsilon\in \mathcal{A}_\varepsilon(\tau)$.
By the invariance of $\mathcal{A}_\varepsilon(\tau)$, we get there exist $\{t_n\}_{n=1}^\infty$
with $t_n\rightarrow \infty$ as $n\rightarrow \infty$ and $\mu^\varepsilon_n\in \mathcal{A}_\varepsilon(\tau-t_n)$
such that
\be \label{ae1}
\mu^\varepsilon=S^\varepsilon(t_n,\tau-t_n)\mu^\varepsilon_n.
\ee

Let $\xi^{n,\varepsilon}=(\xi^{n,\varepsilon}_1,\xi^{n,\varepsilon}_2)\in
 L^2_{\mathcal{F}_{\tau-t_n}}(\mathbb  L^2(\mathcal {O}) ))$ with $\mathcal L(\xi^{n,\varepsilon})=\mu^\varepsilon_n$,
By \eqref{w12} we have
\be \label{aw12}
w\left( {\tau,\tau-t_n,\xi ^{n,\varepsilon}  } \right) = w_1 \left( {\tau,\tau-t_n,\xi ^{n,\varepsilon}  } \right)
+ w_2 \left( {\tau,\tau-t_n,\xi ^{n,\varepsilon}  } \right).
\ee
Note that for $\tau\in \R$ and
  $0<\varepsilon<\varepsilon_0$, we get
$$
S^{i,\varepsilon}(t_n,\tau-t_n)\mu^\varepsilon_n
=p_{\ast}^{i,\varepsilon}(\tau,\tau-t_n)
\mu^\varepsilon_n .
$$
By \eqref{v1e} we get
\be \label{as1to}
\E(\|w_1 \left( {\tau,\tau-t_n,\xi ^{n,\varepsilon}  } \right)\|^2)
\rightarrow 0, \quad \text{as}\quad t\rightarrow\infty.
\ee

By Lemma \ref{Tsb4}, \ref{aTsb4} and Chebyshev's inequality,
 we see there exists a large enough $n_0\in \N$ such that
  the sequence $\{p_{\ast}^{2,\varepsilon}(\tau,\tau-t_n)\mu_{n}^\varepsilon\}^\infty_{n=n_0}$
is tight on $\mathbb H^1_w(\calo)$, as balls are weakly compact.
Since weak compacts in $\mathbb H^1(\calo)$ are metrizable,
the set $\{p_{\ast}^{2,\varepsilon}(\tau,\tau-t_n)\mu_{n}^\varepsilon\}^\infty_{n=n_0}$
is relatively sequentially compact in the narrow topology of the space of finite Borel
measures on $\mathbb H^1_w(\calo)$ by \cite[Theorem 6]{Le57}.
Thus there exist a $\mu^\varepsilon_0\in \mathcal P(\mathbb H^1(\calo))$
and a subsequence $\{p_{\ast}^{2,\varepsilon}(\tau,\tau-t_{n_l})\mu_{n_l}^\varepsilon\}^\infty_{l=1}$
 of $\{p_{\ast}^{2,\varepsilon}(\tau,\tau-t_n)\mu_{n}^\varepsilon\}^\infty_{n=n_0}$ such that
\be \label{swc}
\mathop {\lim }\limits_{l \to \infty } \left( {\varphi ,p_{\ast}^{2,\varepsilon}(\tau,\tau-t_{n_l})\mu_{n_l}^\varepsilon } \right)
= \left( {\varphi ,\mu^\varepsilon_0 } \right),\quad \forall\varphi  \in  C_b \left( \mathbb H^1_w(\calo) \right),
\ee
which together with Lemma \ref{Tsb4} and \ref{aTsb4} implies that
\be \label{veh}
\int_{\mathbb H^1_\varepsilon ( {\mathcal{O}})}\| \xi\|^2_{\mathbb H^1_\varepsilon ( {\mathcal{O}})}\mu^\varepsilon_0(d\xi)
\le L_3(\tau).
\ee
It follows from  \eqref{aw12}-\eqref{swc} and  Lemma \ref{Lch} that
 \be \label{vtv}
 \mu^\varepsilon=\mu^\varepsilon_0.
 \ee
Then by \eqref{veh} and \eqref{vtv}
$$
\int_{\mathbb H^1_\varepsilon ( {\mathcal{O}})}\| \xi\|^2_{\mathbb H^1_\varepsilon ( {\mathcal{O}})}\mu^\varepsilon(d\xi)
\le L_3(\tau).
$$
This completes the proof.

\end{proof}

Now, we state the main result of this paper.\\
 {\bf{Proof of Theorem \ref{upsemi}}.}
Given $\tau\in \R$, by  Lemma \ref{Lue} we find that
\begin{equation}\label{ucona1}
\int_{\mathbb H^1_\varepsilon ( {\mathcal{O}})}\| \xi\|^2_{\mathbb H^1_\varepsilon ( {\mathcal{O}})}\mu(d\xi)
\le L_3(\tau) \quad  \mbox{for all } \
  0< \varepsilon <\varepsilon_0 \ \ \mbox{and} \
\mu \in \mathcal{A}_\varepsilon(\tau) ,
\end{equation}
where $L_3(\tau)>0$  depends on $\tau$,
but not on $\varepsilon$.
The proof of the remaining part is similar to the proof of
 Lemma 6.3 in \cite{LW24}, and is therefore omitted.

\end{document}